\theoremstyle{definition}
\newtheorem{mydef}{Definition}[section]
\newtheorem{myque}[mydef]{Question}
\theoremstyle{remark}
\newtheorem{mybem}[mydef]{Remark}
\newtheorem{myex}[mydef]{Example}
\theoremstyle{plain}
\newtheorem{mycol}[mydef]{Corollary}
\newtheorem{mysen}[mydef]{Theorem}
\newtheorem{mylem}[mydef]{Lemma}
\newtheorem{myfact}[mydef]{Fact}
\newtheorem{myclaim}{Claim}
\newtheorem{mysenx}{Theorem}
\numberwithin{mydef}{section}
\DeclareMathOperator{\cof}{cof}
\DeclareMathOperator{\dom}{dom}
\DeclareMathOperator{\otp}{otp}
\DeclareMathOperator{\Add}{Add}
\DeclareMathOperator{\Coll}{Coll}
\DeclareMathOperator{\Odd}{Odd}
\DeclareMathOperator{\DSS}{DSS}
\DeclareMathOperator{\Lim}{Lim}
\DeclareMathOperator{\IA}{IA}
\DeclareMathOperator{\ISNIC}{ISNIC}
\newcommand{\dM}{\mathbb{M}}
\newcommand{\dP}{\mathbb{P}}
\newcommand{\dQ}{\mathbb{Q}}
\newcommand{\dS}{\mathbb{S}}
\newcommand{\dT}{\mathbb{T}}
\newcommand{\uhr}{\upharpoonright}
\newcommand{\ZFC}{\mathsf{ZFC}}
\newcommand{\AP}{\mathsf{AP}}
\title{Disjoint Stationary Sequences on an Interval of Cardinals} 
\author[H. Jakob]{Hannes Jakob} 
\address{Mathematisches Institut\\ University of Freiburg \\
	Ernst-Zermelo-Stra{\ss}e 1, 79104 Freiburg}
\email{hannes.jakob@mathematik.uni-freiburg.de}
\subjclass[2020]{Primary: 03E05, Secondary: 03E35, 03E55} 
\date{\today}
\begin{document}
	
	
	\baselineskip=17pt
	
	\keywords{Disjoint stationary sequences, Approachability property, Mitchell forcing, Strong distributivity}
	
	
	\begin{abstract}
		We answer a question of Krueger by obtaining -- from countably many Mahlo cardinals -- a model where there is a disjoint stationary sequence on $\aleph_{n+2}$ for every $n\in\omega$. In that same model, the notions of being internally stationary and internally club are distinct on a stationary subset of $[H(\Theta)]^{\aleph_{n+1}}$ for every $n\in\omega$ and $\Theta\geq\aleph_{n+2}$, answering another of Krueger's questions. This is obtained by employing a product of variants of Mitchell forcing which uses finite support for the Cohen reals and full support for the countably many collapses.
	\end{abstract}
	
	\maketitle
	
	\section{Introduction}
	
	The notion of a \emph{disjoint club sequence on $\mu^+$} -- a sequence $(A_{\alpha})_{\alpha\in S}$ where $S\subseteq\mu^+\cap\cof(\mu)$ is stationary and each $A_{\alpha}$ is club in $[\alpha]^{<\mu}$ -- was isolated by Friedman and Krueger (see \cite{FriedmanKruegerThinDisjoint}) in order to show that there can consistently be a fat stationary subset of $\omega_2$ which cannot obtain a club subset in any forcing extension preserving both $\omega_1$ and $\omega_2$. Later on, Krueger (see \cite{KruegerApplicMSI}) defined the related notion of a \emph{disjoint stationary sequence} -- where instead of requiring each $A_{\alpha}$ to be club we merely require it to be stationary. The related principle $\DSS(\mu^+)$ (stating that there exists a disjoint stationary sequence on $\mu^+$) serves as a strengthening of $\neg\AP_{\mu}$ which has the advantage of being more easily preserved and has applications regarding the distinctions of variants of internal approachability, defined by Foreman and Todorcevic (see \cite{ForemanTodorLowenheimSkolem}).
	
	In \cite{KruegerApplicMSI}, Krueger shows using his method of \emph{mixed support iterations} that there can consistently be a disjoint stationary sequence on any successor of a regular cardinal. In that same paper, he asks whether there can consistently be disjoint stationary sequences on two (or even infinitely many) successive cardinals. The background of this question is as follows: Krueger's mixed support iterations are modeled after the poset Mitchell employed in \cite{MitchellTreeProp} to construct a model where the tree property holds at the double successor of any cardinal. Mitchell's result was later extended first by Abraham (see \cite{AbrahamTrees}) and then by Cummings and Foreman (see \cite{CumForeTreeProp}), who showed that an iteration of a guessing variant of Mitchell forcing can be used to obtain a model where $\aleph_{n+2}$ has the tree property for every $n\in\omega$.
	
	In \cite{LevineDisjointStatSeq}, Levine showed that a variant of Mitchell forcing also forces the existence of a disjoint stationary sequence and employed an iteration of two instances of his poset to construct a model where $\DSS(\aleph_2)$ and $\DSS(\aleph_3)$ hold simultaneously. However, extending this to infinitely many successive cardinals simultaneously has some technical problems (which we will elaborate on in Section 5) due to the differences between his forcing and Mitchell's original poset. In this paper, we will show that a product of instances of Levine's forcing is also sufficient to obtain $\DSS(\aleph_2)\wedge\DSS(\aleph_3)$. We will then take a product of infinitely many instances of his poset with finite supports on the Cohen reals and full support on the collapses in order to solve Krueger's question in full:
	
	\begin{mysenx}\label{Thm1}
		Assume $(\kappa_n)_{n\in\omega}$ is an increasing sequence of Mahlo cardinals. There is a forcing extension in which, for each $n\in\omega$, $\aleph_{n+2}=\kappa_n$ and there is a disjoint stationary sequence on $\kappa_n$.
	\end{mysenx}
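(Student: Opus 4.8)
The plan is to realize the extension by an $\omega$-indexed product $\dP$ of variants of Levine's poset. For $n\in\omega$ let $\dM_n$ denote the Mitchell-style forcing which, working over $\kappa_{n-1}$ (where $\kappa_{-1}:=\aleph_1$), adds $\kappa_n$ many Cohen subsets of $\kappa_{n-1}$ via $\dA_n=\Add(\kappa_{n-1},\kappa_n)$ while Mitchell-interleaving the collapses $\Coll(\kappa_{n-1},\gamma)$ for $\gamma\in(\kappa_{n-1},\kappa_n)$, arranged (following \cite{LevineDisjointStatSeq}) so as to add a disjoint stationary sequence on $\kappa_n$. A condition of $\dP$ is a sequence $p=(p_n)_{n\in\omega}$ with $p_n\in\dM_n$, subject to the mixed-support requirement that $\{n:p_n\text{ has nontrivial Cohen part}\}$ is finite while the collapse coordinates are unrestricted. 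I would first record, for each $n$, the factorization $\dP\cong\dP_{<n}\times\dP_{\geq n}$ and the Mitchell presentation of $\dP_{\geq n}$ as a projection of $\dA_{\geq n}\times\dT_{\geq n}$, where $\dA_{\geq n}=\prod^{\mathrm{fin}}_{m\geq n}\Add(\kappa_{m-1},\kappa_m)$ is the finite-support Cohen product and $\dT_{\geq n}$ is the full-support term-space forcing for the collapses. The decisive structural fact is that $\dT_{\geq n}$ is $<\!\kappa_{n-1}$-closed (indeed the tail $\dT_{>n}$ is $<\!\kappa_n$-closed), whereas a finite-support version of the collapses would lose this closure.

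Next I would determine the cardinal structure. Since each $\kappa_n$ is Mahlo, a $\Delta$-system reflection argument on the finite-support Cohen product shows that $\dA_{\geq n}$ has the $\kappa_n$-cc, and $\dT_{\geq n}$ is $<\!\kappa_n$-closed; together with $|\dP_{<n}|<\kappa_n$ this preserves $\kappa_n$. The collapse coordinate of $\dM_n$ generically witnesses $|\gamma|=\kappa_{n-1}$ for every $\gamma\in(\kappa_{n-1},\kappa_n)$, so every ground-model cardinal strictly between $\kappa_{n-1}$ and $\kappa_n$ is collapsed to $\kappa_{n-1}$. Starting from $\kappa_{-1}=\aleph_1$ and inducting on $n$, this yields $\kappa_n=(\kappa_{n-1})^+=\aleph_{n+2}$ in $V[\dP]$.

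The heart of the argument is $\DSS(\kappa_n)$ for each fixed $n$. Applying Levine's theorem (\cite{LevineDisjointStatSeq}) inside $V[\dP_{\leq n}]$, the $\dM_n$-generic yields a disjoint stationary sequence $(A_\alpha)_{\alpha\in S}$ on $\kappa_n$ with each $A_\alpha$ stationary in $[\alpha]^{<\kappa_{n-1}}$ (here $S\subseteq\kappa_n$ consists of points with $\cf(\alpha)=\kappa_{n-1}$). It then suffices to show that the tail forcing $\dP_{>n}$ over $V[\dP_{\leq n}]$ keeps each $A_\alpha$ stationary and the sequence disjoint. I would factor $\dP_{>n}$ as a projection of $\dA_{>n}\times\dT_{>n}$ and run the preservation argument in $V[\dP_{\leq n}][\dA_{>n}\times\dT_{>n}]$. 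The full-support collapse part $\dT_{>n}$ is $<\!\kappa_n$-closed and therefore entirely harmless: it adds no subset of $\alpha$ of size $<\kappa_{n-1}$ and preserves stationary subsets of $[\alpha]^{<\kappa_{n-1}}$. Thus the whole difficulty is concentrated in the finite-support Cohen product $\dA_{>n}$.

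That last point is the main obstacle. Although each factor of $\dA_{>n}$ is $<\!\kappa_n$-closed, the product is taken with finite support over the countably many levels $m>n$, and a diagonal argument across these coordinates shows that $\dA_{>n}$ adds new reals; consequently $[\alpha]^{<\kappa_{n-1}}$ genuinely grows and one cannot reduce to the case of unchanged ground-model structures. The preservation must instead be established as a direct stationary-set-preservation statement: given a condition and a name $\dot C$ for a $<\!\kappa_{n-1}$-club in $[\alpha]^{<\kappa_{n-1}}$, I would build an $\in$-increasing continuous chain of elementary submodels of length $\kappa_{n-1}=\cf(\alpha)$ together with a suitably guided descending sequence of conditions, exploiting that each Cohen factor is highly closed and that only finitely many of them are ever active, to produce a condition forcing some element of $A_\alpha$ into $\dot C$. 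This is exactly the strong distributivity property of $\dA_{>n}$ that must be isolated and verified. The genuinely new difficulty relative to the finite-product case (and the reason a pure iteration in Levine's style fails) is that infinitely many collapse coordinates are active at once while the Cohen coordinates must be restrained to finite support; balancing these two demands so that the master-condition construction goes through uniformly in $n$ is, I expect, the crux of the whole argument.
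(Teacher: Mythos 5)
Your construction has the right shape at the level of supports (finite support on the Cohen coordinates, full support on the collapse coordinates), but it uses the wrong Cohen forcing at each coordinate, and this is fatal. You take the Cohen part of the $n$-th factor to be $\dA_n=\Add(\kappa_{n-1},\kappa_n)$, i.e.\ Cohen subsets of $\kappa_{n-1}$. Levine's theorem, which you invoke to produce the disjoint stationary sequence on $\kappa_n$, is not about this poset: it concerns $\dM^+(\omega,\kappa_{n-1},\kappa_n)$, whose Cohen part adds \emph{reals}. The mechanism behind it is Krueger's Fact~\ref{KruegerFact}: adding a real produces stationarily many $N\in[H(\Theta)]^{<\mu}\cap\IA(\omega)$ with $N\cap X\notin V$, and these new traces survive the ${<}\,\mu$-closed collapses (Fact~\ref{IAPreserve}) and constitute the stationary sets $\mathcal{S}_\alpha$. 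No analogue of Fact~\ref{KruegerFact} is known for $\Add(\tau)$ with $\tau>\omega$ (this is an open question of Krueger), which is exactly why the paper ``de-couples'' the two cardinals and uses $\dM^+(\omega,\kappa_{n-1},\kappa_n)$ --- adding reals at \emph{every} coordinate --- instead of the Cummings--Foreman style factors you propose. With your $\dM_n$ there is simply no known way to produce the sequence $(\mathcal{S}_\alpha)_{\alpha\in S}$ in $V[\dP_{\leq n}]$, so the heart of your proof has nothing to preserve.

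The second, related problem is that your preservation plan is internally inconsistent. You correctly observe that $\dA_{>n}=\prod^{\mathrm{fin}}_{m>n}\Add(\kappa_{m-1},\kappa_m)$ adds reals (a finite-support product of infinitely many nontrivial posets does), but you then propose to verify ``the strong distributivity of $\dA_{>n}$'': a poset that adds a real is not even ${<}\,\omega_1$-distributive, so no such property can hold and no master-condition argument can establish it. (Your chain-condition claim also fails: for $m>n$ the factor $\Add(\kappa_{m-1},\kappa_m)$ has antichains of size $\kappa_{m-1}\geq\kappa_n$, so $\dA_{\geq n}$ is not $\kappa_n$-cc.) In the paper the division of labour is the opposite of yours, precisely because all Cohen coordinates are reals: the finite-support product $\prod_{k\geq n+1}\Add^*(\omega,\kappa_k)$ is ccc, hence preserves the generalized stationary sets outright (Lemma~\ref{StatPres}); the delicate part is the full-support term forcing $\dT^{n+1}$, which is ${<}\,\kappa_n$-closed only in $V$ and loses closure once reals are added by the coordinates $\leq n$. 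This is where the paper's genuinely new ingredient enters: strong ${<}\,\kappa_n$-distributivity, which unlike closure is preserved by $\kappa_n$-cc forcing (Lemma~\ref{StrongEaston}) and still suffices to preserve stationary subsets of $\kappa_n$ and of $[\alpha]^{<\kappa_{n-1}}$ (Lemma~\ref{PreservationProp}, Corollary~\ref{PresPropGen}). If you replace your factors by $\dM^+(\omega,\kappa_{n-1},\kappa_n)$, your outline essentially becomes the paper's proof --- but the preservation lemma you must then prove concerns the collapse/term part, not the Cohen part.
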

	
	Levine noted that in his model of $\DSS(\aleph_2)\wedge\DSS(\aleph_3)$, the notions of internal stationarity and clubness are distinct for stationarily many $N\in[H(\aleph_2)]^{\aleph_1}$ and $N\in[H(\aleph_3)]^{\aleph_2}$. The same -- and even more -- is also true in our case (answering an additional question from \cite{KruegerApplicMSI}):
	
	\begin{mysenx}\label{Thm2}
		Assume $(\kappa_n)_{n\in\omega}$ is an increasing sequence of Mahlo cardinals. There is a forcing extension in which, for each $n\in\omega$, $\aleph_{n+2}=\kappa_n$ and whenever $\Theta\geq\kappa_n$, there are stationarily many $N\in[H(\Theta)]^{<\kappa_n}$ which are internally stationary but not internally club.
	\end{mysenx}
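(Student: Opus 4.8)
The plan is to obtain Theorem~\ref{Thm1} first and then read off Theorem~B from it by the standard combinatorial translation, due to Krueger, between disjoint stationary sequences and the internally stationary / internally club dichotomy. So I would work in the model produced by Theorem~\ref{Thm1}, fix $n\in\omega$, and write $\mu=\aleph_{n+1}$, so that $\mu^+=\kappa_n=\aleph_{n+2}$; by Theorem~\ref{Thm1} there is a disjoint stationary sequence $\langle A_\alpha:\alpha\in S\rangle$ on $\mu^+$, with $S\subseteq\mu^+\cap\cof(\mu)$ stationary, each $A_\alpha$ stationary in $[\alpha]^{<\mu}$, and $A_\alpha\cap A_\beta=\emptyset$ for distinct $\alpha,\beta\in S$. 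Since $\aleph_{n+1}$ is the largest cardinal below $\kappa_n$, a stationary subset of $[H(\Theta)]^{\mu}$ is also stationary in $[H(\Theta)]^{<\kappa_n}$, so it suffices to prove the purely combinatorial fact that $\DSS(\mu^+)$ implies: for every $\Theta\geq\mu^+$ there are stationarily many $N\in[H(\Theta)]^{\mu}$ that are internally stationary but not internally club.

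To prove this fact I would fix $\Theta\geq\mu^+$ together with a club $E\subseteq[H(\Theta)]^{\mu}$ and construct a single $N\in E$ with the two desired properties. First note that the set of $N\prec H(\Theta)$ with $\mu\subseteq N$, $\langle A_\alpha\rangle\in N$, $|N|=\mu$ and $\gamma:=\sup(N\cap\mu^+)\in S$ is stationary, by the stationarity of $S$ in $\mu^+$ and the usual projection between $[H(\Theta)]^{\mu}$ and $\mu^+$. For such an $N$ the map $\pi(a)=a\cap\mu^+$ sends $[N]^{<\mu}$ into $[\gamma]^{<\mu}$, and because $N\cap\mu^+$ is cofinal in $\gamma$ of order type $\mu$ this projection is well-behaved with respect to the two club filters (as in Krueger's analysis), so the internal structure of $N$ can be transported to $[\gamma]^{<\mu}$.

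The construction then splits into two halves. For internal stationarity I would build $N$ as the union of a continuous $\subseteq$-increasing chain of small elementary submodels, arranged to lie cofinally in $E$, to steer $\sup(N\cap\mu^+)$ into $S$, and simultaneously to catch the stationary set $A_\gamma$; the stationarity of $A_\gamma$ in $[\gamma]^{<\mu}$ then forces $N\cap[N]^{<\mu}$ to be stationary in $[N]^{<\mu}$, i.e. $N$ is internally stationary. For the failure of internal clubness the disjointness is essential: were club-many of these $N$ internally club, each would yield, through $\pi$, a club of $[\gamma]^{<\mu}$ sitting inside the internal points of $N$, and feeding these back through a pressing-down argument along $S$ while invoking $A_\alpha\cap A_\beta=\emptyset$ for distinct indices would produce a contradiction, exactly as in Krueger's proof that $\DSS(\mu^+)$ is equivalent to the stationarity of the internally-stationary-but-not-internally-club models. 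Hence stationarily many $N$ witness the distinction.

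The step I expect to require the most care is this last one: turning a hypothetical internal club of $N$ into a genuine obstruction to the disjoint stationary sequence and making sure the pressing-down reflection actually triggers the pairwise disjointness. By contrast, the internal-stationarity half and the projection lemma are routine once the catching construction is in place, and---crucially---neither depends on the particular $\Theta\geq\kappa_n$ nor on $n$. Thus the one sequence $\langle A_\alpha\rangle$ supplied by Theorem~\ref{Thm1} handles all $\Theta\geq\kappa_n$ at once, and since Theorem~\ref{Thm1} already produces such a sequence on every $\kappa_n$ simultaneously, no further forcing is needed: all of the genuinely hard work sits in Theorem~\ref{Thm1}, with Theorem~B being the combinatorial read-off.
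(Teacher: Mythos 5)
There is a genuine gap, and it is exactly the one the paper warns about at the start of its treatment of Theorem \ref{Thm2}. Your whole reduction rests on a ``standard combinatorial translation, due to Krueger'' from $\DSS(\mu^+)$ to the internally-stationary/not-internally-club distinction. No such $\ZFC$ translation exists. Krueger's equivalence (\cite[Theorem 6.5]{KruegerApplicMSI}) is between $\DSS(\mu^+)$ and the existence of stationarily many $N\in[H(\mu^+)]^{\mu}$ that are internally \emph{unbounded} (a strictly weaker property than internally stationary) but not internally club, it only concerns $\Theta=\mu^+$, and it is proved under the hypothesis $2^{\mu}=\mu^+$; the paper cites \cite{JakobCascadingVariants} for the fact that without this hypothesis the equivalence can consistently fail. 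In the model of Theorem \ref{Thm1} the hypothesis fails for every $n$ simultaneously: the product adds at least $\kappa_n$ Cohen reals for each $n$, so $2^{\aleph_0}\geq\aleph_{\omega+1}$ and hence $2^{\aleph_{n+1}}>\aleph_{n+2}=\kappa_n$ for all $n$. So Theorem \ref{Thm2} cannot be ``read off'' from Theorem \ref{Thm1}; the claim that all the hard work sits in Theorem \ref{Thm1} is precisely what the paper's construction is designed to route around.

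Your fallback sketch of a direct proof of the ``combinatorial fact'' breaks at the step you call routine. Arranging $\gamma=\sup(N\cap\mu^+)\in S$ and ``catching'' $A_\gamma$ gives no internal stationarity: stationarity of $A_\gamma$ in $[\gamma]^{<\mu}$ says nothing about $[N]^{<\mu}\cap N$ being stationary, because there is no mechanism forcing any element of $A_\gamma$ (or any pullback of one along $a\mapsto a\cap\gamma$) to be a \emph{member} of $N$; also, without $2^{\mu}=\mu^+$ there is no bijection in $N$ between $N$ and $N\cap\mu^+$ through which internal structure can be transported. In the paper, membership is obtained from forcing, not combinatorics: in Theorem \ref{BetterDist} the witnesses are $N=M[G]\cap H^{V[G]}(\Theta)$ with $\nu=M\cap\lambda$ inaccessible, internal stationarity comes from the fact that every ${<}\,\nu$-sized set of the intermediate extension $V[G']$ lies in $M[G]$ together with Fact \ref{KruegerFact}, and the failure of internal clubness comes from the converse fact that the small subsets of $\lambda$ in $N$ are exactly those in $V[G']$, while Fact \ref{KruegerFact} produces stationarily many traces outside $V[G']$. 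The proof of Theorem \ref{Thm2} then requires the strengthened local property $\ISNIC^+(\mu^+)$ (no club in $[N\cap\mu^+]^{<\mu}$ inside $N$), the upward preservation Lemma \ref{PreserveUp} for $\mu$-cc forcing, the downward transfer Lemma \ref{PreserveDown} for ${<}\,\mu^+$-distributive forcing, and the projection analysis of the product; none of this is dispensable, and none of it is supplied by quoting Theorem \ref{Thm1}.
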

	
	As part of the proof of the theorem above, we also show that merely a Mahlo cardinal suffices for a distinction between internal stationarity and clubness for stationarily many $N\in[H(\Theta)]^{\mu}$, answering another question of Krueger.
	
	The paper is organized as follows: In Section 2, we review preliminary definitions and results. In Section 3, we introduce the concept of strong ${<}\,\kappa$-distributivity, a regularity property for forcings which lies between ${<}\,\kappa$-distributivity and $\kappa$-strategic closure. In Section 4, we review Levine's poset from \cite{LevineDisjointStatSeq} and state its properties. In Section 5, we prove our main theorems.
	
	\section{Preliminaries}
	
	We assume the reader is familiar with the basics of forcing and the usage of large cardinals. Good introductory material can be found in the textbooks by Jech (see \cite{JechSetTheory}) and Kunen (see \cite{KunenSetTheory}). We follow the convention that filters are up-closed, so that if $p\leq q$, $p$ forces more than $q$.
	
	The notion of a disjoint stationary sequence was defined by Krueger in \cite{KruegerApplicMSI}:
	
	\begin{mydef}
		Let $\mu$ be a regular cardinal. A \emph{disjoint stationary sequence on $\mu^+$} is a sequence $(\mathcal{S}_{\alpha})_{\alpha\in S}$ where
		\begin{enumerate}
			\item $S\subseteq\mu^+\cap\cof(\mu)$ is stationary,
			\item for any $\alpha\in S$, $\mathcal{S}_{\alpha}\subseteq[\alpha]^{<\mu}$ is stationary.
		\end{enumerate}
		We let $\DSS(\mu^+)$ state that there exists a disjoint stationary sequence on $\mu^+$.
	\end{mydef}
	
	The principle $\DSS(\mu^+)$ is related to the failure of the approachability property at $\mu^+$:
	
	\begin{mydef}
		Let $\mu$ be a cardinal. A set $S\subseteq\mu^+$ is in the \emph{approachability ideal $I[\mu^+]$} if there exists a sequence $(a_{\alpha})_{\alpha\in\mu^+}$ of elements of $[\mu^+]^{<\mu}$ and a club $C\subseteq\mu^+$ such that whenever $\gamma\in S\cap C$, there is $A\subseteq\gamma$ unbounded in $\gamma$ such that $A\cap\beta\in\{a_{\alpha}\;|\;\alpha<\gamma\}$ for any $\beta<\gamma$.
		
		The \emph{approachability property} $\AP_{\mu}$ states that $I[\mu^+]$ is improper, i.e. $\mu^+\in I[\mu^+]$.
	\end{mydef}
	
	Also related are the variants of internal approachability which were first defined by Foreman and Todorcevic (see \cite{ForemanTodorLowenheimSkolem}) in order to prove a strengthening of the well-known model-theoretic Löwenheim-Skolem theorem.
	
	\begin{mydef}
		Let $X$ be a set, $\mu$ a regular cardinal and $N\in[X]^{\mu}$. We say that
		\begin{enumerate}
			\item $N$ is internally unbounded if $[N]^{<\mu}\cap N$ is unbounded in $[N]^{<\mu}$,
			\item $N$ is internally stationary if $[N]^{<\mu}\cap N$ is stationary in $[N]^{<\mu}$,
			\item $N$ is internally club if $[N]^{<\mu}\cap N$ contains a club in $[N]^{<\mu}$ and
			\item $N$ is internally approachable if there exists a sequence $(x_i)_{i\in\mu}$ of elements of $[N]^{<\mu}$ such that $\bigcup_{i\in\mu}x_i=N$ and $(x_i)_{i<j}\in N$ for every $j<\mu$.
		\end{enumerate}
	\end{mydef}
	
	The previous concepts are related as follows: Krueger showed (see \cite[Corollary 3.7]{KruegerApplicMSI}) that whenever $(\mathcal{S}_{\alpha})_{\alpha\in S}$ is a disjoint stationary sequence on $\mu^+$, no stationary subset of $S$ is in $I[\mu^+]$. In particular, $\DSS(\mu^+)$ implies $\neg\AP_{\mu}$ (the upshot of this is that $\DSS(\mu^+)$ is in general more easily preserved than $\neg\AP_{\mu}$; this was e.g. exploited in \cite[Theorem 5.1]{HonzikStejskalovaCardInvariant}). He also showed (see \cite[Theorem 6.5]{KruegerApplicMSI}) that, assuming $2^{\mu}=\mu^+$, $\DSS(\mu^+)$ is equivalent to the existence of stationarily many $N\in[H(\mu^+)]^{\mu}$ which are internally unbounded but not internally club (it is shown in \cite{JakobCascadingVariants} that this equivalence relies on the assumption that $2^{\mu}=\mu^+$). Lastly, a folklore result (see e.g. \cite[Lemma 1]{CoxFAAppSSR}) states that, again assuming $2^{\mu}=\mu^+$, $\AP_{\mu}$ fails if and only if there are stationarily many $N\in[H(\mu^+)]^{\mu}$ which are internally unbounded but not internally approachable (this also relies on the assumption $2^{\mu}=\mu^+$).
	
	We will obtain our consistency results using variants of Mitchell forcing. A common technique when working with such posets is the use of a projection analysis:
	
	\begin{mydef}
		Let $\dP$ and $\dQ$ be forcing orders. A function $\pi\colon\dP\to\dQ$ is a \emph{projection} if the following hold:
		\begin{enumerate}
			\item $\pi(1_{\dP})=1_{\dQ}$.
			\item For all $p\leq q$, $\pi(p)\leq \pi(q)$
			\item For all $p\in\dP$, if $q\leq \pi(p)$, there is some $p'\leq p$ such that $\pi(p')\leq q$.
		\end{enumerate}
	\end{mydef}
	
	If there exists a projection from $\dP$ to $\dQ$, any extension by $\dQ$ can be forcing extended to an extension by $\dP$:
	
	\begin{mydef}
		Let $\dP$ and $\dQ$ be forcing orders, $\pi\colon\dP\to\dQ$ a projection. Let $H$ be $\dQ$-generic. In $V[H]$, the forcing order $\dP/H$ consists of all $p\in\dP$ such that $\pi(p)\in H$, ordered as a suborder of $\dP$. We let $\dP/\dQ$ be a $\dQ$-name for $\dP/\dot{H}$ and call $\dP/\dQ$ the \emph{quotient forcing} of $\dP$ and $\dQ$.
	\end{mydef}
	
	\begin{myfact}
		Let $\dP$ and $\dQ$ be forcing orders and $\pi\colon\dP\to\dQ$ a projection. If $H$ is $\dQ$-generic over $V$ and $G$ is $\dP/H$-generic over $V[H]$, then $G$ is $\dP$-generic over $V$ and $H\subseteq\pi[G]$. In particular, $V[H][G]=V[G]$.
	\end{myfact}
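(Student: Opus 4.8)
The plan is to verify the three assertions in turn: that $G$ is a filter on $\dP$, that $G$ meets every dense subset of $\dP$ lying in $V$, and that $\pi[G]$ recovers $H$; the last of these yields $H\in V[G]$ and hence the equality of the two extensions. First I would check that $G$ is a filter on all of $\dP$. Directedness and nonemptiness are inherited from $\dP/H$, so only upward closure needs attention: if $p\in G$ and $p\leq q$ in $\dP$, then property (2) of a projection gives $\pi(p)\leq\pi(q)$, and since $\pi(p)\in H$ and $H$ is a filter we get $\pi(q)\in H$, i.e. $q\in\dP/H$; as $G$ is upward closed inside $\dP/H$ we conclude $q\in G$. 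Before the genericity argument I would also record the consequence of property (3) applied to $1_{\dP}$: every $q\in\dQ$ satisfies $q\leq\pi(1_{\dP})=1_{\dQ}$, so there is $p$ with $\pi(p)\leq q$; thus the range of $\pi$ is dense below every element of $\dQ$.

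The core of the argument is to show that $G$ meets every $D\in V$ that is dense in $\dP$. Here I would use that, although $\dP/H\notin V$, each individual condition $p\in\dP/H$ is an element of $\dP$ and hence of $V$. Fixing such a $p$ (so $\pi(p)\in H$), I would define in $V$ the set
$$E_p=\{\pi(d)\mid d\in D,\ d\leq p\}\cup\{q\in\dQ\mid q\perp\pi(p)\}$$
and verify that $E_p$ is dense in $\dQ$: given $q_0$ incompatible with $\pi(p)$ we are already done, and otherwise, choosing $q_1\leq q_0,\pi(p)$, property (3) yields $p'\leq p$ with $\pi(p')\leq q_1$, after which density of $D$ produces $d\leq p'\leq p$ in $D$ with $\pi(d)\leq q_1\leq q_0$. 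Since $H$ is generic and $E_p\in V$, some $q\in H\cap E_p$ exists; as $\pi(p)\in H$ and $q\in H$ these two conditions are compatible, ruling out the incompatible part, so $q=\pi(d)$ for some $d\in D$ with $d\leq p$, and now $\pi(d)=q\in H$ gives $d\in D\cap\dP/H$. This shows $D\cap\dP/H$ is dense in $\dP/H$; since it lies in $V[H]$ and $G$ is $\dP/H$-generic, $G$ meets it, and hence meets $D$. Together with the first paragraph, $G$ is $\dP$-generic over $V$.

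Finally I would identify $H$ from $G$. The inclusion $\pi[G]\subseteq H$ is immediate, since $p\in G\subseteq\dP/H$ gives $\pi(p)\in H$. For the converse, given $q\in H$ I would apply genericity of $G$ to the set $D_q=\{p\in\dP\mid\pi(p)\leq q\}\cup\{p\mid\pi(p)\perp q\}$, which is dense in $\dP$ by another application of property (3) (descend from any compatible witness below $\pi(p)$ and $q$); picking $p\in G\cap D_q$, the conditions $\pi(p),q\in H$ are compatible, which excludes the second alternative, so $\pi(p)\leq q$. Hence every element of $H$ lies above an element of $\pi[G]$, so $H$ is exactly the filter generated by $\pi[G]$. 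As this filter is computable from $G$ together with $\pi\in V$, we obtain $H\in V[G]$, whence $V[H]\subseteq V[G]$; combined with the trivial inclusion $V[G]\subseteq V[H][G]$, this gives $V[H][G]=V[G]$.

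I expect the delicate point to be the density argument of the third paragraph. The set $E_p$ must be built from the \emph{values} $\pi(d)$ themselves, rather than merely from conditions lying above them, so that intersecting it with the upward-closed filter $H$ actually places the witness $d$ into the quotient $\dP/H$ (working with conditions above $\pi(d)$ would only yield membership of a weaker condition in $H$, which is useless). Property (3) is precisely what allows one to descend from $\pi(p)$ to such a $d$, and it is the only place where the full strength of the projection hypothesis is needed.
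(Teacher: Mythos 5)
The paper states this as a Fact without proof, so there is no in-paper argument to compare against; your proof is the standard one and it is correct. The key density computation is exactly right: for $p\in\dP/H$ the set $E_p$, built from the \emph{values} $\pi(d)$ for $d\in D$ with $d\leq p$ together with the conditions incompatible with $\pi(p)$, is dense in $\dQ$ and lies in $V$, so intersecting it with $H$ produces $d\in D\cap\dP/H$ below $p$; the filter check and the recovery of $H$ from $G$ via the dense sets $D_q$ are also correct. One point worth making explicit: what you actually prove in the last paragraph is that $H$ is the upward closure of $\pi[G]$, not the literal inclusion $H\subseteq\pi[G]$ asserted in the Fact. That is the right call, because the literal inclusion is false in general under the paper's definition of projection: take $\dQ$ to be Cohen forcing $2^{<\omega}$, let $\dP$ be its dense suborder of even-length conditions, and let $\pi$ be the inclusion map; then $\dP/H=H\cap\dP$ is directed, so the unique $\dP/H$-generic filter is $G=H\cap\dP$ itself, and $\pi[G]$ omits the odd-length elements of $H$. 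The up-closure form you prove is all that is needed to conclude $H\in V[G]$ and hence $V[H][G]=V[G]$, which is the only part of the Fact the paper ever uses.
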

	
	The canonical projection for Mitchell forcing comes from the \emph{termspace forcing}, an idea due to Laver:
	
	\begin{mydef}
		Let $\dP$ be a poset and $\dot{\dQ}$ a $\dP$-name for a poset. The \emph{termspace order} $\leq^*$ is an order on the set $T(\dP,\dot{\dQ})$ of all $\dP$-names for elements of $\dQ$ given by $\dot{q}'\leq^*\dot{q}$ if and only if $1_{\dP}\Vdash\dot{q}'\leq_{\dQ}\dot{q}$.
	\end{mydef}
	
	Using standard arguments on names, one easily shows:
	
	\begin{mylem}
		Let $\dP$ be a poset and $\dot{\dQ}$ a $\dP$-name for a poset. The identity is a projection from $\dP\times T(\dP,\dot{\dQ})$ onto $\dP*\dot{\dQ}$.
	\end{mylem}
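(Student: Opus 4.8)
The plan is to verify the three clauses of the projection definition directly for the identity map. First I would fix notation. A condition in $\dP\times T(\dP,\dot\dQ)$ is a pair $(p,\dot q)$ where $p\in\dP$ and $\dot q$ is a $\dP$-name with $1_\dP\Vdash\dot q\in\dQ$, ordered by $(p',\dot q')\leq(p,\dot q)$ iff $p'\leq_\dP p$ and $1_\dP\Vdash\dot q'\leq_\dQ\dot q$; a condition in $\dP*\dot\dQ$ is a pair $(p,\dot q)$ with $p\Vdash\dot q\in\dQ$, ordered by $(p',\dot q')\leq(p,\dot q)$ iff $p'\leq_\dP p$ and $p'\Vdash\dot q'\leq_\dQ\dot q$. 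Since any name witnessing membership in $T(\dP,\dot\dQ)$ in particular satisfies $p\Vdash\dot q\in\dQ$, the identity is a well-defined map from the product onto the iteration, and it fixes the common top element, giving clause (1). For clause (2), if $(p',\dot q')\leq(p,\dot q)$ holds in the product then $1_\dP\Vdash\dot q'\leq\dot q$, and since $p'\leq_\dP 1_\dP$ this yields $p'\Vdash\dot q'\leq\dot q$, so the same pair lies below $(p,\dot q)$ in the iteration.

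The only real content is clause (3). Here I start with $(p,\dot q)$ in the product and a condition $(p',\dot q')\leq(p,\dot q)$ in $\dP*\dot\dQ$, so that $p'\leq_\dP p$ and $p'\Vdash\dot q'\leq\dot q$, and I must produce some $(p'',\dot q'')$ below $(p,\dot q)$ in the product whose image lies below $(p',\dot q')$ in the iteration. I would take $p''=p'$ and build $\dot q''$ by a mixing argument. The issue is that $\dot q'$ is only forced to be $\leq\dot q$ below $p'$, whereas to sit below $(p,\dot q)$ in the \emph{product} I need $1_\dP\Vdash\dot q''\leq\dot q$ everywhere. To fix this I would choose a maximal antichain $B$ among the conditions incompatible with $p'$, so that $\{p'\}\cup B$ is a maximal antichain in $\dP$, and then use the mixing lemma to define a name $\dot q''$ with $p'\Vdash\dot q''=\dot q'$ and $b\Vdash\dot q''=\dot q$ for every $b\in B$.

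With this $\dot q''$ in hand the verification is routine. Below $p'$ we have $\dot q''=\dot q'$ and $p'\Vdash\dot q'\leq\dot q$, while below each $b\in B$ we have $\dot q''=\dot q$; since $\{p'\}\cup B$ is a maximal antichain this gives both $1_\dP\Vdash\dot q''\in\dQ$ (so that $\dot q''\in T(\dP,\dot\dQ)$) and $1_\dP\Vdash\dot q''\leq\dot q$, hence $(p',\dot q'')\leq(p,\dot q)$ in the product. On the other hand $p'\Vdash\dot q''=\dot q'$ yields $p'\Vdash\dot q''\leq\dot q'$, so the image $(p',\dot q'')$ lies below $(p',\dot q')$ in $\dP*\dot\dQ$, as required. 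The one point demanding care — and the only step that is more than bookkeeping — is the mixing: one must check that $\{p'\}\cup B$ really is maximal (every condition is either compatible with $p'$ or compatible with some element of $B$), since this is exactly what guarantees that the mixed name is forced to lie in $\dQ$ and to be $\leq\dot q$ on all of $\dP$.
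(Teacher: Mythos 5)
Your proof is correct, and it is precisely the ``standard argument on names'' that the paper invokes without writing out: the only nontrivial clause of the projection definition is the third, and your mixing of $\dot q'$ with $\dot q$ across the maximal antichain $\{p'\}\cup B$ is the canonical way to produce a termspace name that is $\leq^*\dot q$ everywhere while agreeing with $\dot q'$ below $p'$. Nothing is missing; the verification that $\{p'\}\cup B$ is maximal, which you rightly flag as the one point of care, is exactly what makes the mixed name land in $T(\dP,\dot{\dQ})$ below $\dot q$.
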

	
	We will be using Jech's notion of \emph{generalized stationarity}: For $\mu$ a regular cardinal and $X$ a set of size $\geq\mu$, $C\subseteq[X]^{<\mu}$ is \emph{club} if for any $y\in[X]^{<\mu}$ there is $c\in C$ with $y\subseteq c$ and $C$ is closed under ascending unions of size ${<}\,\mu$. A set is \emph{stationary} if it intersects every club. An important (but easy to prove) result is the following:
	
	\begin{mylem}[{\cite[Theorem 1.5]{MenasStrongCSuperC}}]
		A set $C\subseteq[X]^{<\mu}$ is club if and only if there is a function $f\colon[X]^{<\omega}\to[X]^{<\mu}$ such that whenever $x\in[X]^{<\mu}$ and $\bigcup_{y\in[x]^{<\omega}}f(y)\subseteq x$, $x\in C$.
	\end{mylem}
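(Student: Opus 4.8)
The plan is to prove both implications by analysing the set of \emph{closure points}
$C_f=\{x\in[X]^{<\mu}:\bigcup_{y\in[x]^{<\omega}}f(y)\subseteq x\}$ attached to a function $f\colon[X]^{<\omega}\to[X]^{<\mu}$; the content of the statement is then that the club filter on $[X]^{<\mu}$ is generated by the sets $C_f$.

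For the direction from right to left I would first check that each $C_f$ is itself closed and unbounded, so that $C\supseteq C_f$ lies in the club filter. Closedness is immediate: if $\langle x_i\rangle_{i<\lambda}$ is a $\subseteq$-increasing sequence in $C_f$ with $\lambda<\mu$ and $x=\bigcup_i x_i$, then any finite $y\subseteq x$ already lies in some $x_i$, whence $f(y)\subseteq x_i\subseteq x$; regularity of $\mu$ gives $|x|<\mu$. For unboundedness, given $z\in[X]^{<\mu}$ I would close off under $f$ in $\omega$ steps, setting $z_0=z$ and $z_{n+1}=z_n\cup\bigcup_{y\in[z_n]^{<\omega}}f(y)$: regularity of $\mu$ keeps each $z_n$ of size ${<}\,\mu$, and $\bigcup_n z_n\in C_f$ extends $z$.

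For the converse I would use unboundedness of $C$ to choose, for every finite $y$, a witness $f(y)\in C$ with $y\subseteq f(y)$, arranging by recursion on $|y|$ that $f$ is monotone ($y\subseteq y'\Rightarrow f(y)\subseteq f(y')$). Given an $f$-closed $x$ one has $x=\bigcup\{f(y):y\in[x]^{<\omega}\}$, and the goal is to realise $x$ as a $\subseteq$-increasing continuous union of length ${<}\,\mu$ of members of $C$, so that closedness of $C$ yields $x\in C$. When $x$ is countable this is clean: enumerating $[x]^{<\omega}$ as $\langle y_n\rangle_{n<\omega}$ and setting $t_n=f(y_0\cup\dots\cup y_n)$ produces, by monotonicity, an increasing $\omega$-chain in $C$ with union $x$. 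For uncountable $x$ I would proceed by induction on $|x|$, writing $x$ as a continuous increasing union of $f$-closed sets of strictly smaller cardinality — the $f$-closures of initial segments of an enumeration of $x$ — and applying the induction hypothesis to each.

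The main obstacle is precisely this gluing at the uncountable stages: closedness of $C$ is only closure under ascending \emph{chains}, not under arbitrary directed unions, so the directed family $\{f(y):y\in[x]^{<\omega}\}$ cannot be collapsed to a chain for free, and one must genuinely produce the approximating $f$-closed sets of smaller size while keeping them inside $C$. This is where the recursion has to be set up with care — choosing the witnesses inside $C$ itself and using regularity of $\mu$ to bound the cardinalities of the successive closures — and it is the only point where more than a routine verification is required.
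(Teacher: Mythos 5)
Your right-to-left direction is correct and standard: the set $C_f$ of $f$-closed points is closed under ascending unions and unbounded (via the $\omega$-step closing-off), so any $C\supseteq C_f$ lies in the club filter. (Note that this is all one can prove: a superset of $C_f$ need not itself be closed, so the literal ``$C$ is club'' in the cited statement should be read as ``$C$ contains a club''; that imprecision is in the folklore formulation, and containing a club is all the paper uses.) The genuine gap is in your left-to-right direction, at exactly the spot you identify as the main obstacle. Your inductive step requires writing an $f$-closed set $x$ of uncountable cardinality $\kappa$ as a continuous increasing union of $f$-closed sets of cardinality strictly less than $\kappa$, namely the $f$-closures of initial segments of an enumeration of $x$. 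But regularity of $\mu$ only bounds such closures below $\mu$, not below $\kappa$, and in general $f$-closed sets of size ${<}\,\kappa$ need not exist at all. Concretely, take $X=\mu=\omega_2$ and the club $C=\{z\in[\omega_2]^{<\omega_2}\;:\;\omega_1\subseteq z\}$. Your $f$ takes values in $C$, so $f(\emptyset)\supseteq\omega_1$, hence \emph{every} $f$-closed set contains $\omega_1$ and has size at least $\aleph_1$; moreover the $f$-closure of $\emptyset$ is itself $f$-closed of size exactly $\aleph_1$. Taking $x$ to be that closure, every $f$-closed subset of $x$ contains the closure of $\emptyset$, i.e.\ equals $x$, so all your approximating sets are $x$ itself, the countable base case is vacuous, and the induction never gets started. (The desired conclusion $x\in C$ is of course still true here; it is the method that breaks.)

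The repair is to run the induction on the cardinality of directed \emph{subfamilies of $C$} rather than on $f$-closed sets: prove, by induction on $\lambda<\mu$, that the union of any $\subseteq$-directed family $D\subseteq C$ with $|D|\leq\lambda$ belongs to $C$. A finite directed family has a maximum. A countable directed family $\{d_n\;:\;n<\omega\}$ contains a cofinal increasing chain (let $e_0=d_0$ and $e_{n+1}\in D$ be an upper bound of $e_n$ and $d_{n+1}$), so chain-closedness of $C$ applies. For uncountable $\lambda$, fix a map assigning to each pair of members of $D$ an upper bound in $D$, and let $D_\alpha$ be the closure under that map of the first $\alpha$ elements of an enumeration of $D$ (Iwamura's lemma): the $D_\alpha$ are directed, of size ${<}\,\lambda$, increasing and continuous with union $D$, so by the induction hypothesis each $\bigcup D_\alpha\in C$, and these sets form an increasing chain of length $\lambda<\mu$ whose union is $\bigcup D$, which is therefore in $C$. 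Finally apply this with $D=\{f(y)\;:\;y\in[x]^{<\omega}\}$, which by your monotonicity is directed, has size $\leq|x|<\mu$, and has union $x$; this gives $x\in C$. The crucial difference from your plan is that the approximating sets $\bigcup D_\alpha$ lie in $C$ but are \emph{not} required to be $f$-closed --- that extra requirement is precisely what cannot be met.
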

	
	This means that a set $S\subseteq[X]^{<\mu}$ is stationary if and only if whenever $f\colon[X]^{<\omega}\to[X]^{<\mu}$, there is $x\in S$ which is closed under $f$ (i.e. $f(y)\subseteq x$ for any $y\in[x]^{<\omega}$). Moreover, Menas' result implies that whenever $C\subseteq[X]^{<\mu}$ is club and $Y\subseteq X$ has size $\geq\mu$, $\{c\cap Y\;|\;c\in C\}$ contains a club in $[Y]^{<\mu}$. Another fact we will be using is the following (the proof is the same as the usual proof for the corresponding fact for club subsets of $\mu$):
	
	\begin{mylem}\label{StatPres}
		Let $\mu$ be a regular cardinal and $X$ a set of size $\geq\mu$. Let $\dP$ be a $\mu$-cc poset and $G$ be $\dP$-generic. If $C\subseteq[X]^{<\mu}$ is club in $V[G]$, there exists $D\subseteq C$ with $D\in V$ such that $D$ is club in $[X]^{<\mu}$. In particular, any stationary subset of $[X]^{<\mu}$ is stationary in $[X]^{<\mu}$ in $V[G]$.
	\end{mylem}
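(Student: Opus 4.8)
The plan is to combine the Menas characterization of club sets with a standard $\mu$-cc name-bounding argument, exactly paralleling the proof that a $\mu$-cc forcing preserves clubs in $\mu$.

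First I would reduce the statement to a bounding lemma for names: since $\dP$ is $\mu$-cc and $\mu$ is regular, for any $\dP$-name $\dot a$ with $1_{\dP}\Vdash\dot a\in[X]^{<\mu}$ there is $b\in[X]^{<\mu}$ in $V$ with $1_{\dP}\Vdash\dot a\subseteq\check b$. To see this, note first that by $\mu$-cc the set of values forced for the ordinal $|\dot a|$ along a maximal antichain has size ${<}\,\mu$, so by regularity of $\mu$ there is a fixed $\gamma^*<\mu$ with $1_{\dP}\Vdash|\dot a|<\gamma^*$. Fixing a name $\dot e$ for a surjection $\gamma^*\to\dot a$, the same argument applied coordinatewise yields, for each $\xi<\gamma^*$, a set $S_\xi\in[X]^{<\mu}$ in $V$ containing every value forced for $\dot e(\xi)$ along a maximal antichain. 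Then $b=\bigcup_{\xi<\gamma^*}S_\xi$ works and lies in $[X]^{<\mu}$ precisely because $\mu$ is regular.

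Second, given a club $C\subseteq[X]^{<\mu}$ in $V[G]$, I would use the Menas characterization in $V[G]$ to fix a function $f\colon[X]^{<\omega}\to[X]^{<\mu}$ all of whose closure points lie in $C$, and take a name $\dot f$ for it. Applying the bounding lemma to $\dot f(\check y)$ for each $y\in[X]^{<\omega}$ produces, in $V$, a function $g\colon[X]^{<\omega}\to[X]^{<\mu}$ with $1_{\dP}\Vdash\dot f(\check y)\subseteq\check{g(y)}$ for all $y$. Let $D$ be the set of $x\in[X]^{<\mu}$ closed under $g$; by Menas' characterization (applied in $V$) $D$ is club. For any $x\in D$ and any $y\in[x]^{<\omega}$ we have, in $V[G]$, that $f(y)\subseteq g(y)\subseteq x$, so $x$ is closed under $f$ and hence $x\in C$; thus $D\subseteq C$, which is the first assertion. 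For the ``in particular'', let $S\in V$ be stationary and let $C$ be any club in $V[G]$: choosing $D\in V$ club with $D\subseteq C$ as above, stationarity of $S$ in $V$ gives $S\cap D\neq\emptyset$, whence $S\cap C\neq\emptyset$, so $S$ remains stationary in $V[G]$.

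The main obstacle is the bounding lemma: one must extract a single ground-model bound $b$ from a name known only to be forced into $[X]^{<\mu}$, and this requires using $\mu$-cc twice --- once to pin down a uniform bound $\gamma^*<\mu$ on the size, and once, coordinate by coordinate, to bound the possible values --- together with the regularity of $\mu$ to keep the resulting union of size ${<}\,\mu$. Everything else is a routine translation through the Menas characterization.
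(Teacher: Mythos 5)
Your proof is correct and is exactly the standard argument the paper appeals to (it gives no details, saying only that the proof is ``the same as the usual proof for club subsets of $\mu$''): bound a name for a Menas function by a ground-model function using the $\mu$-cc and regularity of $\mu$, and take its closure points. The only details worth polishing are routine: use mixing (or work below a condition in $G$) so that $1_{\dP}$ forces $\dot f$ to be a function into $[\check X]^{<\mu}$, and handle the trivial case $\dot a=\emptyset$ when fixing the surjection name $\dot e$.
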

	
	We will later ``force over'' set-sized elementary submodels of large substructures of the universe. For this, we use the following notation: If $M$ is a set, $\dP$ is a poset and $G$ is a $\dP$-generic filter, $M[G]$ is the set of $\tau^G$ for all $\tau\in M$ which are $\dP$-names. We have the following statement which is proven similarly to the result that any ccc poset is proper:

	\begin{mylem}
		Let $\Theta$ be a cardinal, $M\prec H(\Theta)$ and $\dP\in M$ a poset. Assume that $\kappa$ is a cardinal such that $M\cap\kappa\in\kappa$ and $\dP$ is $\kappa$-cc. Let $G$ be $\dP$-generic. In $V[G]$, $M[G]\cap V=M$.
	\end{mylem}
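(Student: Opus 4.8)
The plan is to mimic the standard proof that ccc posets are proper, with the hypothesis $M\cap\kappa\in\kappa$ playing the role that countability of $M$ plays in the ccc case. The inclusion $M\subseteq M[G]\cap V$ is immediate: given $x\in M$, the canonical check-name $\check{x}$ is definable from $x$, so $\check{x}\in M$ by elementarity, and $\check{x}^G=x\in V$. The entire content is therefore the reverse inclusion $M[G]\cap V\subseteq M$.

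The key step, replacing the observation that in the ccc case every (necessarily countable) antichain lying in a countable $M$ is contained in $M$, is the following: every maximal antichain $A\in M$ of $\dP$ satisfies $A\subseteq M$. To see this, note that $A\in M\prec H(\Theta)$ and, since $\dP$ is $\kappa$-cc, $|A|<\kappa$; by elementarity there is a surjection $f\colon\lambda\to A$ with $\lambda=|A|<\kappa$ and $f,\lambda\in M$. Now $\lambda\in M\cap\kappa$, and since $M\cap\kappa$ is an ordinal by assumption, every $\mu<\lambda$ also lies in $M\cap\kappa\subseteq M$. Hence $f(\mu)\in M$ for every $\mu<\lambda$ by elementarity, so every element of $A$ is in $M$. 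It is exactly here that both hypotheses on $\kappa$ enter: for a large antichain the conclusion would fail even when $A\in M$, and it is the assumption $M\cap\kappa\in\kappa$ (and not merely $|A|<\kappa$) that forces the indices $\mu<\lambda$ into $M$.

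With this in hand I would prove $M[G]\cap V\subseteq M$ by the usual genericity argument. Fix a $\dP$-name $\tau\in M$ with $x:=\tau^G\in V$. A routine $\kappa$-cc argument (using that $\dP\in H(\Theta)$, so $|\dP|<\kappa'<\Theta$ and $\dP$ preserves $H(\Theta)$ for $\Theta$ regular, as is implicit here) shows $x\in H(\Theta)$, which is what lets the forcing relation for the relevant statements be computed correctly inside $H(\Theta)$. Then the set $D$ of conditions $q$ which either force $\tau=\check{y}$ for some $y$, or below which no condition forces $\tau$ to equal any check-name, is dense and definable in $H(\Theta)$ from $\tau$ and $\dP$; hence $D\in M$, and by elementarity there is a maximal antichain $A\subseteq D$ with $A\in M$. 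By the antichain fact $A\subseteq M$, so $G$ meets $A$ in some $a\in M$. Since $a\in G$ and $\tau^G\in V$, a compatibility argument rules out the second alternative, so $a$ forces $\tau=\check{y}$ for a unique $y$; by elementarity this $y$ lies in $M$, and as $a\in G$ we conclude $x=\tau^G=y\in M$.

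I expect the antichain fact, although it is where the hypotheses are used, to be entirely routine. The genuinely delicate point is the bookkeeping in the last paragraph: verifying that $x\in H(\Theta)$ and that the dense set $D$ (which implicitly refers to ``$\tau\in\check{V}$'') is captured correctly by elementarity, so that the value forced by a condition of $M$ is again an element of $M$. This absoluteness of the forcing relation between $H(\Theta)$ and $V$ is the main thing requiring care.
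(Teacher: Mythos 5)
Your proposal is correct and follows essentially the same route as the paper: the heart of both arguments is the observation that a maximal antichain $A\in M$ has size $<\kappa$ by the $\kappa$-cc, hence is enumerated in $M$ by some $f\colon\mu\to A$ with $\mu\subseteq M$ (using $M\cap\kappa\in\kappa$), forcing $A\subseteq M$. Your only deviation is cosmetic: where the paper modifies $\tau$ so that $1_{\dP}\Vdash\tau\in\check{V}$ and takes an antichain of conditions deciding $\tau$ as a check-name, you work with the unmodified $\tau$ and a two-case dense set, ruling out the second case by genericity -- an equivalent bookkeeping device.
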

	
	\begin{proof}
		It is clear that $M[G]\cap V\supseteq M$, since $M$ contains $\check{x}$ whenever $x\in M$.
		
		Let $\tau\in M$ be a $\dP$-name such that $\tau^G\in V$. By modifying $\tau$ if necessary (without changing its evaluation according to $G$) we can assume that $1_{\dP}\Vdash\tau\in\check{V}$. So since $\tau\in M$ and $M\prec H(\Theta)$, there is a maximal antichain $A\in M$ of conditions forcing $\tau=\check{x}$ for $x\in V$. Since $\dP$ is $\kappa$-cc, there is in $M$ a cardinal $\mu<\kappa$ and a surjection $f\colon\mu\to A$. However, as $\mu<\kappa$ and $M\cap\kappa\in\kappa$, $\mu\subseteq M$ and so $f[\mu]=A\subseteq M$. In particular, there is $p\in G\cap A\cap M$ forcing $\tau=\check{x}$ for $x\in V$. By elementarity, $x\in M$ and so $\tau^G=x\in M$.
	\end{proof}
	
	\section{Strongly Distributive Forcings}
	
	In this section, we introduce a strengthening of distributivity which axiomatizes a common technique when working with ${<}\,\kappa$-closed partial orders (the construction of a ``sufficiently generic'' sequence of length $\kappa$). By design, this property is able to replace ${<}\,\kappa$-closure in many applications (such as the preservation of the stationarity of subsets of $\kappa$ or the $\kappa$-cc of forcing notions) but has one crucial advantage: Unlike ${<}\,\kappa$-closure, strong ${<}\,\kappa$-distributivity is preserved by $\kappa$-cc forcing extensions. This will later be applied in order to show that the tails of the product we use to obtain our main theorem do not destroy the disjoint stationary sequences which we already added.
	
	\begin{mydef}
		Let $\dP$ be a poset and $\kappa$ a cardinal. $\dP$ is \emph{strongly ${<}\,\kappa$-distributive} if for any sequence $(D_{\alpha})_{\alpha<\kappa}$ of open dense subsets of $\dP$ and any $p\in\dP$, there is a descending sequence $(p_{\alpha})_{\alpha<\kappa}$ such that $p_0\leq p$ and $\forall\alpha<\kappa$, $p_{\alpha}\in D_{\alpha}$. Such a sequence will be called a \emph{thread through $(D_{\alpha})_{\alpha<\kappa}$}.
	\end{mydef}
	
	Strong ${<}\,\kappa$-distributivity can be thought of as having ${<}\,\kappa$-distributivity witnessed in a uniform way: If $(D_{\alpha})_{\alpha<\kappa}$ is a sequence of open dense subsets of some ${<}\,\kappa$-distributive forcing notion, there is a sequence $(p_{\alpha})_{\alpha<\kappa}$ such that for all $\alpha<\kappa$, $p_{\alpha}\leq p_0$ and $p_{\alpha}\in\bigcap_{\beta<\alpha}D_{\beta}$ (since the intersection of ${<}\,\kappa$ open dense sets is open dense). However, we cannot in general find such a sequence in a uniform way, i.e. such that it is descending.
	
	Obviously strong ${<}\,\kappa$-distributivity implies ${<}\,\kappa$-distributivity. Note that strong ${<}\,\kappa$-distributivity and ${<}\,\kappa$-distributivity are not equivalent: If $S\subseteq\omega_1$ is stationary, the usual forcing shooting a club through $S$ by initial segments is ${<}\,\omega_1$-distributive. However, if $\omega_1\smallsetminus S$ is also stationary, the poset necessarily destroys the stationarity of that set, so the forcing cannot be strongly ${<}\,\omega_1$-distributive by Lemma \ref{PreservationProp} below.
	
	Keeping with the theme of strong ${<}\,\kappa$-distributivity being a uniform version of ${<}\,\kappa$-distributivity, we have the following characterisation: Recall that for antichains $A$ and $B$ we say that $A$ \emph{refines} $B$ if for every $q\in A$ there is $q'\in B$ with $q\leq q'$.
	
	\begin{mylem}\label{StrongDistAntichain}
		For a forcing order $\dP$ and a cardinal $\kappa$, the following are equivalent:
		\begin{enumerate}
			\item $\dP$ is strongly ${<}\,\kappa$-distributive.
			\item $\dP$ is ${<}\,\kappa$-distributive and for $p\in\dP$ and any descending sequence $(A_{\alpha})_{\alpha<\kappa}$ (with regards to refinement) of maximal antichains below $p$, there is a descending sequence $(p_{\alpha})_{\alpha<\kappa}$ such that $p_0\leq p$ and for any $\alpha$, $p_{\alpha}\in A_{\alpha}$.
		\end{enumerate}
	\end{mylem}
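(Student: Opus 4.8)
The plan is to prove both implications, the real content being the translation between descending sequences of open dense sets and descending (refining) sequences of maximal antichains. Throughout I will use that for any antichain $A$, the element of $A$ lying above a fixed condition is unique when it exists (two such would be compatible, hence equal).

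First, for $(1)\Rightarrow(2)$: strong ${<}\,\kappa$-distributivity trivially gives ${<}\,\kappa$-distributivity (pad a given sequence of ${<}\,\kappa$ many open dense sets to length $\kappa$ with copies of $\dP$, which is itself open dense, and read off the term $p_\lambda$ of a thread, which lies below $p$ and, by openness, in every earlier $D_\alpha$). So only the antichain condition needs work. Given $p$ and a refining sequence $(A_\alpha)_{\alpha<\kappa}$ of maximal antichains below $p$, I would set $D_\alpha := \{q \in \dP : q \perp p\} \cup \{q : \exists r\in A_\alpha\,(q\le r)\}$. Each $D_\alpha$ is open and dense (below $p$ use maximality of $A_\alpha$, elsewhere the incompatibility clause), so a thread $(q_\alpha)_{\alpha<\kappa}$ through $(D_\alpha)_{\alpha<\kappa}$ with $q_0\le p$ exists. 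Since each $q_\alpha\le q_0\le p$ is compatible with $p$, the incompatibility clause is vacuous and there is a unique $r_\alpha\in A_\alpha$ with $q_\alpha\le r_\alpha$. I would then check that $(r_\alpha)_{\alpha<\kappa}$ is descending: for $\beta<\alpha$ refinement gives some $r'\in A_\beta$ with $r_\alpha\le r'$, and since both $q_\alpha\le r_\alpha\le r'$ and $q_\alpha\le q_\beta\le r_\beta$ exhibit members of $A_\beta$ above $q_\alpha$, uniqueness forces $r'=r_\beta$, hence $r_\alpha\le r_\beta$. As $r_0\in A_0$ gives $r_0\le p$, the sequence $(r_\alpha)$ is the one required.

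For $(2)\Rightarrow(1)$: given open dense $(D_\alpha)_{\alpha<\kappa}$ and $p$, the strategy is to manufacture a refining sequence of maximal antichains below $p$ captured by the $D_\alpha$, and then invoke the antichain condition once to extract a thread. Recursively, at stage $\alpha$ I would let $G_\alpha := \bigcap_{\beta\le\alpha} D_\beta \cap \bigcap_{\beta<\alpha} D'_\beta$, where $D'_\beta := \{q : q\perp p\}\cup\{q : \exists r\in A_\beta\,(q\le r)\}$ codes the downward closure of the already-constructed $A_\beta$. This is an intersection of ${<}\,\kappa$ many open dense sets, so ${<}\,\kappa$-distributivity makes $G_\alpha$ dense, and I may choose a maximal antichain $A_\alpha$ below $p$ with $A_\alpha\subseteq G_\alpha$. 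Membership in $G_\alpha$ guarantees $A_\alpha\subseteq D_\alpha$, and since every element of $A_\alpha$ is below $p$ (hence compatible with $p$, so avoids the incompatibility clause of each $D'_\beta$), $A_\alpha$ refines every $A_\beta$ with $\beta<\alpha$. The resulting $(A_\alpha)_{\alpha<\kappa}$ is a descending sequence of maximal antichains below $p$, so applying $(2)$ yields a descending $(p_\alpha)$ with $p_0\le p$ and $p_\alpha\in A_\alpha\subseteq D_\alpha$, a thread witnessing strong ${<}\,\kappa$-distributivity.

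The main obstacle is the limit stage of this recursion: there is no a priori reason a single maximal antichain should simultaneously refine infinitely many earlier ones. What makes it work -- and what shows ${<}\,\kappa$-distributivity, rather than mere density, is exactly the right hypothesis -- is that the coded downward closures $D'_\beta$ are open dense, so their intersection with the $D_\beta$ stays dense at each limit $\lambda<\kappa$, and choosing $A_\lambda$ inside it forces simultaneous refinement. I expect the only remaining care to be routine bookkeeping: confirming that each $D'_\beta$ and $D_\alpha$ is genuinely open dense and that conditions below $p$ select (unique) members of the earlier antichains.
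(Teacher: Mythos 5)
Your proposal is correct and follows essentially the same route as the paper's proof: for $(1)\Rightarrow(2)$ you thread through the (relativized) downward closures of the antichains and recover a descending sequence via the uniqueness of the member of each $A_\beta$ above a fixed condition, exactly as in the paper, which simply assumes $p=1_{\dP}$ where you instead add the clause $\{q : q\perp p\}$; for $(2)\Rightarrow(1)$ you build a refining sequence of maximal antichains inside the given dense sets using ${<}\,\kappa$-distributivity, which is precisely the argument the paper sketches as "easy." Your write-up just supplies the routine details the paper omits, so there is nothing to correct.
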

	
	\begin{proof}
		Assume $\dP$ is strongly ${<}\,\kappa$-distributive. Of course, this implies that $\dP$ is ${<}\,\kappa$-distributive. Assume for notational simplicity that $p=1_{\dP}$. Let $(A_{\alpha})_{\alpha<\kappa}$ be a sequence of maximal antichains in $\dP$ such that for $\beta<\alpha$, $A_{\alpha}$ refines $A_{\beta}$. For $\alpha<\kappa$, let $D_{\alpha}$ be the downward closure of $A_{\alpha}$ and consider a thread $(q_{\alpha})_{\alpha<\kappa}$ through $(D_{\alpha})_{\alpha<\kappa}$. For any $\alpha<\kappa$, let $p_{\alpha}$ be the unique (by pairwise incompatibility) element of $A_{\alpha}$ that is above $q_{\alpha}$. We are done after showing
		\begin{myclaim}
			The sequence $(p_{\alpha})_{\alpha<\kappa}$ is descending.
		\end{myclaim}
		\begin{proof}
			Let $\beta<\alpha$ be arbitrary. Because $A_{\alpha}$ refines $A_{\beta}$, there exists $p_{\beta}'\in A_{\beta}$ such that $p_{\alpha}\leq p_{\beta}'$. Thus, $q_{\alpha}\leq p_{\alpha}\leq p_{\beta}'$ and $q_{\alpha}\leq q_{\beta}\leq p_{\beta}$. In summary, $p_{\beta}'$ and $p_{\beta}$ are compatible and therefore equal.
		\end{proof}
		The other direction is easy. Given a sequence $(D_{\alpha})_{\alpha<\kappa}$ of open dense subsets of $\dP$ we can use the assumed ${<}\,\kappa$-distributivity of $\dP$ to build a sequence $(A_{\alpha})_{\alpha<\kappa}$ such that for any $\alpha<\kappa$, $A_{\alpha}$ refines $A_{\beta}$ for all $\beta<\alpha$ and is contained in $D_{\alpha}$.
	\end{proof}
	
	While ${<}\,\kappa$-distributivity means that every ${<}\,\kappa$-sequence of ground-model elements is in the ground model, strong ${<}\,\kappa$-distributivity means that we can uniformly approximate $\kappa$-sequences of ground-model elements:
	
	\begin{mylem}\label{UniformDecision}
		Let $\dP$ be a poset and $\kappa$ a cardinal. If $\dP$ is strongly ${<}\,\kappa$-distributive, $p\in\dP$ and $\dot{f}$ is a $\dP$-name such that $p\Vdash\dot{f}:\check{\kappa}\longrightarrow V$, there is a descending sequence $(p_{\alpha})_{\alpha<\kappa}$ with $p_0\leq p$ such that for every $\alpha<\kappa$, $p_{\alpha}$ decides $\dot{f}(\check{\alpha})$.
	\end{mylem}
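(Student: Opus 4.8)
The plan is to apply the definition of strong ${<}\,\kappa$-distributivity directly to a suitable sequence of open dense sets, one for each value $\alpha<\kappa$ at which we wish to decide $\dot{f}$. For each $\alpha<\kappa$ the natural candidate is the collection of conditions deciding $\dot{f}(\check{\alpha})$; the only nuisance is that such conditions are dense merely below $p$ (only there does $p$ guarantee that $\dot{f}(\check{\alpha})$ names a ground-model object), whereas the definition of strong distributivity quantifies over dense subsets of all of $\dP$. I would remedy this by padding with the conditions incompatible with $p$.

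Concretely, for each $\alpha<\kappa$ I would set
\[
D_{\alpha}=\{q\in\dP : q\text{ decides }\dot{f}(\check{\alpha})\}\cup\{q\in\dP : q\perp p\},
\]
where ``$q$ decides $\dot{f}(\check{\alpha})$'' means that there is some $x$ with $q\Vdash\dot{f}(\check{\alpha})=\check{x}$. First I would check that $D_{\alpha}$ is open: downward closure of the first set is immediate, since a condition forcing a specific value keeps forcing it, and downward closure of the second set is the standard fact that any extension of a condition incompatible with $p$ is again incompatible with $p$. Then I would check density: any $s\in\dP$ incompatible with $p$ already lies in $D_{\alpha}$, while any $s$ compatible with $p$ admits a common extension $t\leq s,p$, whence $t\Vdash\dot{f}(\check{\alpha})\in\check{V}$ and so some further extension of $t$ (hence of $s$) decides $\dot{f}(\check{\alpha})$.

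With $(D_{\alpha})_{\alpha<\kappa}$ in hand, I would invoke strong ${<}\,\kappa$-distributivity with the given $p$ to produce a descending thread $(p_{\alpha})_{\alpha<\kappa}$ with $p_0\leq p$ and $p_{\alpha}\in D_{\alpha}$ for every $\alpha$. Since the sequence is descending and $p_0\leq p$, each $p_{\alpha}\leq p$ and is in particular compatible with $p$; thus $p_{\alpha}$ cannot lie in the padding set $\{q:q\perp p\}$, so it must decide $\dot{f}(\check{\alpha})$, as desired. There is no genuine obstacle beyond this bookkeeping: the content of the lemma is carried entirely by the definition, and the padding trick is precisely what converts the ``dense below $p$'' sets into the ``open dense in $\dP$'' form demanded by strong distributivity.
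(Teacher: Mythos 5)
Your proof is correct and follows essentially the same route as the paper, which simply applies the definition of strong ${<}\,\kappa$-distributivity to the sets $D_{\alpha}$ of conditions deciding $\dot{f}(\check{\alpha})$. Your padding of $D_{\alpha}$ with the conditions incompatible with $p$ is a careful (and valid) way of handling the fact that decision is only dense below $p$ -- a detail the paper's one-line proof leaves implicit.
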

	
	\begin{proof}
		This is clear: Consider $D_{\alpha}:=\{q\in\dP\;|\;q\text{ decides }\dot{f}(\check{\alpha})\}$.
	\end{proof}
	
	As is the case for ${<}\,\kappa$-distributivity, the converse holds for separative forcing orders (but we will never use this).
	
	We can now prove the preservation results mentioned above:
	
	\begin{mylem}\label{PreservationProp}
		Let $\dP$ be a poset and $\kappa$ a regular cardinal. Assume that $\dP$ is strongly ${<}\,\kappa$-distributive. Then the following hold:
		\begin{enumerate}
			\item If $S\subseteq\kappa$ is stationary, it remains stationary after forcing with $\dP$.
			\item If $\dQ$ is a $\kappa$-cc poset, it remains $\kappa$-cc after forcing with $\dP$.
		\end{enumerate}
	\end{mylem}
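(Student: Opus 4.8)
The plan is to prove both parts by a single mechanism: use strong ${<}\,\kappa$-distributivity to build one descending thread $(p_\alpha)_{\alpha<\kappa}$ of length $\kappa$ and then read a ground-model object off it. The feature that makes this succeed — and that would fail for mere ${<}\,\kappa$-distributivity — is that in such a thread the single condition $p_{\alpha^*}$ lies below \emph{all} earlier $p_\beta$, so it inherits every decision made at an earlier stage; one never has to search separately for a lower bound of an initial segment.

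For (1), I would suppose toward a contradiction that some $p$ forces a name $\dot{C}$ to be club in $\kappa$ and disjoint from $\check{S}$; after passing to $\dP\uhr p$ I may assume $p=1_{\dP}$. For each $\alpha<\kappa$ set $D_{\alpha}:=\{q\;|\;(\exists\gamma>\alpha)\ q\Vdash\check{\gamma}\in\dot{C}\}$, which is open dense because $\dot{C}$ is forced unbounded. Strong ${<}\,\kappa$-distributivity yields a descending thread $(p_{\alpha})_{\alpha<\kappa}$ with $p_{\alpha}\in D_{\alpha}$; let $\gamma_{\alpha}>\alpha$ witness membership, so $p_{\alpha}\Vdash\check{\gamma}_{\alpha}\in\dot{C}$, and since the thread is descending, $p_{\alpha}\Vdash\check{\gamma}_{\beta}\in\dot{C}$ for every $\beta\leq\alpha$. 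The sequence $(\gamma_{\alpha})_{\alpha<\kappa}$ lives in $V$, so its set $E$ of limit closure points (limit $\delta$ with $\gamma_{\beta}<\delta$ for all $\beta<\delta$) is a ground-model club; choosing $\alpha^{*}\in S\cap E$ a limit, the set $\{\gamma_{\beta}\;|\;\beta<\alpha^{*}\}$ is cofinal in $\alpha^{*}$ because each $\gamma_{\beta}\in(\beta,\alpha^{*})$. Then $p_{\alpha^{*}}$ forces all these $\gamma_{\beta}$ into $\dot{C}$, so by closedness $p_{\alpha^{*}}\Vdash\check{\alpha}^{*}\in\dot{C}$; as $\alpha^{*}\in S$, this contradicts $1_{\dP}\Vdash\dot{C}\cap\check{S}=\emptyset$.

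For (2), I would suppose $\dQ$ is $\kappa$-cc in $V$ but some $p$ forces a $\dP$-name $\dot{a}\colon\check{\kappa}\to\check{\dQ}$ to be an injective enumeration of an antichain of $\dQ$. Since $\dQ\in V$, Lemma \ref{UniformDecision} produces a descending thread $(p_{\alpha})_{\alpha<\kappa}$ with $p_{0}\leq p$ where each $p_{\alpha}$ decides $\dot{a}(\check{\alpha})=\check{q}_{\alpha}$ for some $q_{\alpha}\in\dQ$. For $\alpha<\beta$ the condition $p_{\beta}\leq p_{\alpha}$ forces both $\dot{a}(\alpha)=q_{\alpha}$ and $\dot{a}(\beta)=q_{\beta}$, hence forces $q_{\alpha}\perp_{\dQ}q_{\beta}$; since $\dQ$ and its order are unchanged by forcing, any $V$-witness to compatibility of $q_{\alpha}$ and $q_{\beta}$ would be forced by $p_{\beta}$ to witness their compatibility, so $q_{\alpha}$ and $q_{\beta}$ are genuinely incompatible in $V$. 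Thus $\{q_{\alpha}\;|\;\alpha<\kappa\}$ is an antichain of size $\kappa$ in $V$, contradicting the $\kappa$-cc of $\dQ$.

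The one genuinely delicate point, and the step I would dwell on, is the passage in (1) from ``$\{\gamma_{\beta}\;|\;\beta<\alpha^{*}\}$ is forced into $\dot{C}$'' to ``$\check{\alpha}^{*}\in\dot{C}$ is forced'': this is exactly where the closedness of $\dot{C}$ and the choice of $\alpha^{*}$ as a closure point of the ground-model sequence $(\gamma_{\alpha})_{\alpha<\kappa}$ combine, and it is the only place that truly exploits the full length-$\kappa$ thread rather than a lower bound of an initial segment. The rest is routine name manipulation and the absoluteness of incompatibility for fixed conditions of $\dQ$.
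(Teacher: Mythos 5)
Your proposal is correct and takes essentially the same approach as the paper: both parts are proved by constructing a single descending thread of length $\kappa$, using it to read off a ground-model club (resp.\ a ground-model antichain, via absoluteness of incompatibility for fixed elements of $\dQ$), and then intersecting with $S$ (resp.\ contradicting the $\kappa$-cc). The only cosmetic difference is in (1): the paper applies Lemma \ref{UniformDecision} to decide the increasing continuous enumeration of $\dot{C}$ pointwise, so that the image of the decided function is immediately a ground-model club, whereas you decide arbitrary unboundedness witnesses and recover a ground-model club as their closure points, finishing with the forced closedness of $\dot{C}$ at a point of $S$ --- both are standard and equally valid.
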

	
	\begin{proof}
		The proofs of (1) and (2) are basically the same: Assume toward a contradiction that $\dot{f}$ is a $\dP$-name for an ascending enumeration of a club in $\kappa$ (resp. an enumeration of an antichain in $\dQ$), forced by some $p\in\dP$. By Lemma \ref{UniformDecision}, let $g$ be a function on $\kappa$ and $(p_{\alpha})_{\alpha<\kappa}$ a descending sequence in $\dP$ with $p_0\leq p$ such that $p_{\alpha}\Vdash\dot{f}(\check{\alpha})=\check{g}(\check{\alpha})$. It then follows that the image of $g$ is a club in $\kappa$ (resp. an antichain in $\dQ$). In (1), we can find $\alpha$ such that $g(\alpha)\in S$ which implies $p_{\alpha}\Vdash\dot{f}(\check{\alpha})\in\check{S}$ and in (2), we directly obtain a contradiction.
	\end{proof}
	
	Since clubs in $[X]^{<\mu}$ are basically the same as clubs in $\mu$ whenever $|X|=\mu$, we obtain:
	
	\begin{mycol}\label{PresPropGen}
		Let $\dP$ be a poset, $\kappa$ a regular cardinal and $X$ a set with size $\kappa$. If $\dP$ is strongly ${<}\,\kappa$-distributive and $S\subseteq[X]^{<\kappa}$ is stationary, $S$ remains stationary after forcing with $\dP$.
	\end{mycol}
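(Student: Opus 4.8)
The plan is to reduce to Lemma \ref{PreservationProp}(1) by transferring the entire problem along a fixed bijection, making precise the slogan that clubs in $[X]^{<\kappa}$ and clubs in $\kappa$ amount to the same thing. First I would fix, in $V$, a bijection $e\colon\kappa\to X$ and set $F\colon\kappa\to[X]^{<\kappa}$, $F(\alpha)=e[\alpha]$. Since $\kappa$ is regular, $F$ is $\subseteq$-increasing, continuous (because $e[\bigcup_i\alpha_i]=\bigcup_i e[\alpha_i]$), and cofinal in $[X]^{<\kappa}$ (any $x\in[X]^{<\kappa}$ has $e^{-1}[x]\subseteq\gamma$ for some $\gamma<\kappa$, whence $x\subseteq F(\gamma)$).

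The heart of the argument is the following, which is a theorem of $\ZFC$ (so it applies verbatim in $V$ and in $V[G]$, given $e$, $X$, $\kappa$ and the regularity of $\kappa$): for $S\subseteq[X]^{<\kappa}$, $S$ is stationary in $[X]^{<\kappa}$ if and only if $\widetilde{S}:=\{\alpha<\kappa\;|\;e[\alpha]\in S\}$ is stationary in $\kappa$. For the forward direction, given a club $D\subseteq\kappa$ one checks that $C_D:=\{e[\alpha]\;|\;\alpha\in D\}$ is club in $[X]^{<\kappa}$ (closure uses continuity of $F$ together with the regularity of $\kappa$; cofinality uses that $D$ and $F$ are cofinal), so a stationary $S$ meets $C_D$ and thereby $\widetilde{S}\cap D\neq\emptyset$. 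For the converse, given a club $C\subseteq[X]^{<\kappa}$ one shows $D_C:=\{\alpha<\kappa\;|\;e[\alpha]\in C\}$ contains a club of $\kappa$ (closure from continuity of $F$ and closure of $C$; unboundedness by interleaving $F(\alpha)\subseteq x_0\subseteq F(\alpha_1)\subseteq x_1\subseteq\cdots$ with $x_n\in C$ and taking suprema), so a stationary $\widetilde{S}$ meets $D_C$, giving $\alpha$ with $e[\alpha]\in S\cap C$.

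With this in hand the corollary follows by a sandwich. In $V$, $S$ is stationary, so $\widetilde{S}\subseteq\kappa$ is stationary. As $\dP$ is strongly ${<}\,\kappa$-distributive and $\kappa$ is regular, Lemma \ref{PreservationProp}(1) keeps $\widetilde{S}$ stationary in $V[G]$. Strong ${<}\,\kappa$-distributivity implies ${<}\,\kappa$-distributivity, so $\dP$ adds no new sequences of length ${<}\,\kappa$ and $\kappa$ remains regular in $V[G]$; hence the equivalence applies again in $V[G]$, and the stationarity of $\widetilde{S}$ there yields that $S$ is stationary in $V[G]$. I expect the only real work to be verifying the equivalence, and the step most prone to slips is the unboundedness of $D_C$ (the interleaving construction) together with confirming that $\kappa$ is not singularized or collapsed by $\dP$, which is exactly what ${<}\,\kappa$-distributivity secures.
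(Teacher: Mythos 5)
Your proof is correct and follows exactly the route the paper takes: the paper derives this corollary from Lemma \ref{PreservationProp}(1) via the remark that clubs in $[X]^{<\kappa}$ are ``basically the same'' as clubs in $\kappa$ when $|X|=\kappa$, and your bijection $e\colon\kappa\to X$ with the two-way transfer $S\mapsto\widetilde{S}$ is precisely the content of that remark, spelled out (including the needed observations that the equivalence is a $\ZFC$ fact applicable in $V[G]$ and that $\kappa$ stays regular by ${<}\,\kappa$-distributivity).
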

	
	We will later obtain a converse to Lemma \ref{PreservationProp} (2) by showing that any strongly ${<}\,\kappa$-distributive poset remains strongly ${<}\,\kappa$-distributive after forcing with any $\kappa$-cc partial order. However, this proof requires us to build a sequence $(p_{\alpha})_{\alpha<\kappa}$ of elements of $\dP$ where $p_{\alpha}$ is not just any element of a pre-determined open subset of $\dP$ that lies below $(p_{\beta})_{\beta<\alpha}$ but actually depends on the previous choices. We will now show that this is indeed possible by relating strong distributivity to the \emph{completeness game} played on a partial order. An equivalent characterization for distributivity was found by Foreman in \cite{ForemanBool} with almost the same proof.
	
	\begin{mydef}
		Let $\dP$ be a forcing order, $\delta$ an ordinal. The \emph{completeness game} $G(\dP,\delta)$ on $\dP$ with length $\delta$ has players COM (complete) and INC (incomplete) playing elements of $\dP$ with COM playing at even ordinals (and limits) and INC playing at odd ordinals. COM starts by playing $1_{\dP}$, afterwards $p_{\alpha}$ has to be a lower bound of $(p_{\beta})_{\beta<\alpha}$. INC wins if COM is unable to play at some point $<\delta$. Otherwise, COM wins.
		
		We say that $\dP$ is \emph{${<}\,\delta$-strategically closed} if COM has a winning strategy in $G(\dP,\alpha)$ for any $\alpha<\delta$. We say that $\dP$ is \emph{$\delta$-strategically closed} if COM has a winning strategy in $G(\dP,\delta)$.
	\end{mydef}
	
	Foreman showed the following connection between distributivity and the completeness game (see \cite[Page 718]{ForemanBool}):
	
	\begin{mysen}\label{DistCompleteness}
		If $\kappa=\lambda^+$ is a successor, $\dP$ is ${<}\,\kappa$-distributive if and only if INC does not have a winning strategy in $G(\dP,\lambda+1)$.
	\end{mysen}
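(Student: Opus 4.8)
The plan is to prove each direction in contrapositive form, establishing that $\dP$ \emph{fails} to be ${<}\,\kappa$-distributive if and only if INC \emph{has} a winning strategy in $G(\dP,\lambda+1)$. Two elementary observations organise both halves. Since $\kappa=\lambda^+$, a witness to the failure of ${<}\,\kappa$-distributivity is a condition $p^*$ together with a family $(D_\xi)_{\xi<\mu}$ of open (hence downward closed) dense sets, $\mu\le\lambda$, such that no extension of $p^*$ lies in $\bigcap_{\xi<\mu}D_\xi$. Moreover, at a successor position COM may always move by repeating INC's last condition, so COM can only ever get stuck at a limit position; equivalently, INC wins a play precisely when the sequence produced fails to have a lower bound at some limit ${\le}\,\lambda$.

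For the implication ``not distributive $\Rightarrow$ INC wins'', fix such $p^*$ and $(D_\xi)_{\xi<\mu}$ together with an injection $e\colon\mu\to\{\beta<\lambda : \beta\text{ odd}\}$, which exists as there are $\lambda\ge\mu$ odd ordinals below $\lambda$. INC plays as follows: at position $1$ it plays some condition $\le p^*$ (additionally lying in $D_{\xi}$ if $1=e(\xi)$); at an odd position $e(\xi)$ it extends the current condition into $D_\xi$, using density; at every other odd position it plays an arbitrary lower bound. Each move exists, and every condition from position $1$ on is $\le p^*$. If COM survived to the limit $\lambda$, then for each $\xi<\mu$ the entry played at $e(\xi)$ lies in $D_\xi$, so by downward closure any lower bound of the full sequence would lie in $\bigcap_{\xi<\mu}D_\xi$ below $p^*$, contradicting the choice of $p^*$. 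Thus COM cannot complete the play and INC wins.

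For the converse ``INC wins $\Rightarrow$ not distributive'', let $\sigma$ be a winning strategy for INC and set $p^*:=\sigma(\langle 1_{\dP}\rangle)$, so that every $\sigma$-consistent play lies below $p^*$. The plan is to extract from $\sigma$ a family $(D_\alpha)_{\alpha<\lambda}$ of open dense sets with $\bigcap_{\alpha<\lambda}D_\alpha$ containing no condition $\le p^*$, which is exactly a failure of ${<}\,\kappa$-distributivity. The intended $D_\alpha$ is the open dense set whose non-trivial part (below $p^*$) consists of all $q$ that are a lower bound of the first $\alpha$ entries of some $\sigma$-consistent play; conditions incompatible with $p^*$ are added to make $D_\alpha$ dense in all of $\dP$. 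The point of the construction is that a single $q\le p^*$ lying in every $D_\alpha$ yields a $\sigma$-consistent play with a lower bound at every stage $<\lambda$, and hence a lower bound at $\lambda$ itself, i.e.\ a play that COM survives -- contradicting that $\sigma$ is winning. Emptiness of the intersection below $p^*$ therefore follows once the family is correctly set up.

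The genuine work, and the step I expect to be the main obstacle, is arranging density and coherence simultaneously. Coherence is the requirement that the various plays witnessing $q\in D_\alpha$ for different $\alpha$ fit together into one play of length ${\ge}\,\lambda$; I plan to force this by fixing a well-ordering of $\dP$ and letting COM's moves in each witnessing play be chosen canonically (always least in that well-ordering), so that the play is determined by $q$ and $\sigma$ and the stagewise data genuinely refer to a single object. Density of $D_\alpha$ below $p^*$ amounts to showing that below every $r\le p^*$ some $\sigma$-consistent play descends below $r$ and survives to stage $\alpha$; the only way this can fail is that below some $r$ every play dies before stage $\alpha$, but in that case the levels below that common death-stage already provide a shorter family -- still of size $<\kappa$ -- with empty intersection below $r$, which is itself the desired failure of distributivity. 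Handling this case split uniformly, i.e.\ replacing the naive length index by the supremum of the survival-lengths of canonical plays refining $p^*$, is precisely where care is needed and mirrors the location of the real argument in Foreman's proof.
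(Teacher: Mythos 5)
Your first direction (failure of ${<}\,\kappa$-distributivity implies INC wins $G(\dP,\lambda+1)$) is correct and complete: INC threads the play through the $\le\lambda$ many dense sets via the injection $e$, and since plays are descending, COM's hypothetical move at the limit stage $\lambda$ would be a single condition below $p^*$ lying in every $D_\xi$, which cannot exist. This matches the corresponding half of the argument the paper gives for the strong analogue (Theorem \ref{DistCompletenessStrong}).

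The converse, however, has a genuine gap, and it sits exactly at the step you yourself flag as ``where care is needed'' --- which is in fact the entire content of this direction, and the mechanism you propose for it does not work. First, if $D_\alpha$ is defined through the canonical $q$-play (COM always playing the least, in the fixed well-ordering, legal move tied to $q$), then $D_\alpha$ is not open: for $q'\leq q$ the canonical $q'$-play is a genuinely different play (a different constraint on COM's moves produces different responses from $\sigma$), so membership does not pass downward; but a failure of distributivity requires \emph{open} dense sets, and simply replacing $D_\alpha$ by its downward closure destroys coherence, since $q\in\bigcap_{\alpha<\lambda}D_\alpha$ then only yields, for each $\alpha$ separately, a play attached to some condition above $q$. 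Second, your key inference ``a lower bound at every stage ${<}\,\lambda$, hence a lower bound at $\lambda$ itself'' is false for a family of possibly different witnessing plays; it requires one single $\sigma$-consistent play all of whose entries lie above the single condition $q$, which is precisely the coherence you have not secured. Third, density has a chicken-and-egg problem: to get into the cone below a given $r\leq p^*$ you must run a play whose COM-moves pass through $r$ and only afterwards choose $q$ below it, and there is no reason this play coincides with the canonical play of the $q$ you end up choosing; the same difficulty then recurs verbatim inside your fallback case split (the ``shorter family'' needs the same coherence to have empty intersection). What resolves density, coherence and openness simultaneously --- both in Foreman's proof and in the paper's proof of Theorem \ref{DistCompletenessStrong} --- is the construction of a tree of $\sigma$-consistent plays whose levels form maximal antichains refining one another, with each antichain element determining a \emph{unique} play: maximality gives density, pairwise incompatibility forces the nodes above a fixed $q$ at different levels to line up into one branch, and downward closures of antichains are automatically open. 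That construction (or Foreman's version using infima in the Boolean completion) is the missing idea; a well-ordering alone cannot substitute for it.
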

	
	If INC does not have a winning strategy in $G(\dP,\lambda+1)$, they also do not have a winning strategy in $G(\dP,\delta)$ for any $\delta<\lambda^+$. Having this witnessed uniformly suggests the following statement:
	
	\begin{mysen}\label{DistCompletenessStrong}
		Let $\dP$ be a poset and $\kappa$ a cardinal. $\dP$ is strongly ${<}\,\kappa$-distributive if and only if INC does not have a winning strategy in $G(\dP,\kappa)$.
	\end{mysen}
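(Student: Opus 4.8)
The plan is to prove the two implications separately, establishing the nontrivial direction in contrapositive form. It is convenient to recall that a play of $G(\dP,\kappa)$ in which COM is never stuck is exactly a descending sequence of length $\kappa$, and that COM loses precisely by reaching a stage admitting no lower bound of the previously played conditions.

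For ``INC has no winning strategy $\Rightarrow$ strongly ${<}\,\kappa$-distributive'' I would fix open dense sets $(D_\alpha)_{\alpha<\kappa}$ and a condition $p$ and convert them into a strategy for INC. Enumerate the positions at which INC moves increasingly as $(\gamma_\xi)_{\xi<\kappa}$; at $\gamma_\xi$ let INC play some element of $D_\xi$ below COM's previous move, and additionally below $p$ when $\xi=0$. Density makes this always legal, so this is a genuine strategy, and by hypothesis it is not winning. Hence some consistent play has COM surviving all $\kappa$ rounds, and the subsequence consisting of INC's moves is descending, starts below $p$, and meets $D_\xi$ in its $\xi$-th entry; after reindexing this is the desired thread.

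For the converse I would prove the contrapositive: a winning strategy $\sigma$ for INC yields a failure of strong distributivity. If $\dP$ is not ${<}\,\kappa$-distributive we are done, so assume it is and apply Lemma \ref{StrongDistAntichain}: it suffices to build a refining sequence $(A_\alpha)_{\alpha<\kappa}$ of maximal antichains admitting no descending selection $p_\alpha\in A_\alpha$. I would build these by recursion so that each $c\in A_\alpha$ carries a $\sigma$-play of length ${\geq}\,\alpha$ ending at $c$, with refinement matching end-extension of the attached plays. The successor step is routine: below a given $c$ the $\sigma$-responses to one-move extensions of its play are dense (playing the move $q=r$ shows every $r\leq c$ has one below it), so I thin them to a maximal antichain below $c$ and take the union over $A_\alpha$.

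The main obstacle is the limit step, and this is where ${<}\,\kappa$-distributivity of $\dP$ enters: the downward closures of the $A_\beta$ ($\beta<\lambda$) are open dense, so their intersection is dense, and any $q$ in it lies below a coherent branch through $(A_\beta)_{\beta<\lambda}$ (coherence follows from maximality together with refinement). Such a $q$ is then a legal lower bound at the limit, extending the branch's $\sigma$-play by one further COM move, and a maximal antichain of such conditions serves as $A_\lambda$. Finally, any descending selection $(p_\alpha)_{\alpha<\kappa}$ would assemble, via refinement and coherence, into a single $\sigma$-play in which COM survives to stage $\kappa$, contradicting that $\sigma$ is winning; hence no such selection exists and $\dP$ fails to be strongly ${<}\,\kappa$-distributive. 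Keeping the continuation conditions dense at every limit, so that the recursion runs through all $\kappa$ stages, is exactly the point where the uniformity of strong distributivity (as opposed to mere distributivity) is reflected.
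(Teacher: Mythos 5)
Your proposal is correct and follows essentially the same route as the paper: one direction converts a family of open dense sets and a condition $p$ into a strategy for INC whose non-winning plays yield threads, and the other direction builds, from a winning strategy $\sigma$ for INC, a refining sequence of maximal antichains of $\sigma$-responses and invokes Lemma \ref{StrongDistAntichain}, with ${<}\,\kappa$-distributivity entering exactly at limit stages. The only difference is bookkeeping: you attach a $\sigma$-play to each antichain element and get coherence from pairwise incompatibility plus refinement, whereas the paper maintains a uniqueness invariant for descending sequences through its antichains; your explicit reduction to the case where $\dP$ is ${<}\,\kappa$-distributive also makes the justification of the paper's ``common refinement'' at limits more transparent.
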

	
	This characterization is very powerful because it allows us to construct sequences $(p_{\alpha})_{\alpha<\kappa}$ where $p_{\alpha}$ actually depends on $(p_{\beta})_{\beta<\alpha}$ and is not just any lower bound of $(p_{\beta})_{\beta<\alpha}$ in some open dense set. This idea is e.g. used below in the proof of the strong Easton lemma. Another use is in \cite{JakobSlenderApprox}, where the assumption of strong ${<}\,\kappa$-distributivity replaces that of ${<}\,\kappa$-closure in a theorem regarding the approximation property for certain iterations.
	
	For the proof, we follow Foreman's argument in \cite{ForemanBool} while providing more details and avoiding the use of the boolean completion of $\dP$.
	
	\begin{proof}[Proof of Theorem \ref{DistCompletenessStrong}]
		Assume that $\dP$ is not strongly ${<}\,\kappa$-distributive and $(D_{\alpha})_{\alpha<\kappa}$ is a sequence of open dense sets without a thread below some $p\in\dP$. Let INC first play $p$ and then, after $(p_{\delta})_{\delta<\gamma+2n+1}$ has been played, a condition $p_{\gamma+2n+1}\in D_{\gamma+n}$ with $p_{\gamma+2n+1}\leq p_{\gamma+2n}$. It is clear that this strategy wins for INC (otherwise, a losing game for INC would give rise to a thread through $(D_{\alpha})_{\alpha<\kappa}$ by picking out the odd conditions).
		
		Now assume that $\sigma$ is a winning strategy for INC in $G(\dP,\kappa)$. Let $\sigma(1_{\dP})=p$. We will construct a sequence $(A_{\alpha})_{\alpha\in\kappa}$ by induction such that the following holds:
		\begin{enumerate}
			\item For each $\alpha\in\kappa$ and $p_{\alpha}\in A_{\alpha}$, there exists a unique descending sequence $(p_{\beta})_{\beta<\alpha}$ such that for all $\beta\leq\alpha$, $p_{\beta}\in A_{\beta}$ and if $\beta\leq\alpha$ is odd, $p_{\beta}=\sigma((p_{\delta})_{\delta<\beta})$.
			\item If $\alpha\in\kappa$ is odd, $A_{\alpha}$ is a maximal antichain below $p$ (for even $\alpha$, we carefully choose $A_{\alpha}$ to obtain uniqueness in (1)).
		\end{enumerate}
		It follows from (1) and (2) that the sequence $(A_{\alpha})_{\alpha\in\kappa\cap\Odd}$ is a sequence of maximal antichains below $p$ which is descending with regards to refinement.
		
		To begin, let $A_0:=\{1_{\dP}\}$ and $A_1:=\{p\}$. Assume that we have constructed $(A_{\alpha})_{\alpha<\gamma}$, where $\gamma$ is an even successor ordinal. We will construct $A_{\gamma}$ and $A_{\gamma+1}$ simultaneously. Let $D_{\gamma+1}$ consist of all $p'\in\dP$ such that there exists a sequence $(p_{\alpha})_{\alpha<\gamma+2}$ with $p_{\gamma+1}=p'$ such that for all $\alpha<\gamma$, $p_{\alpha}\in A_{\alpha}$ and if $\alpha$ is odd, $p_{\alpha}=\sigma((p_{\beta})_{\beta<\alpha})$.
		\setcounter{myclaim}{0}
		\begin{myclaim}
			$D_{\gamma+1}$ is dense below $p$.
		\end{myclaim}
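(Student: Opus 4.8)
The plan is to prove density by using the maximal antichain sitting immediately below stage $\gamma$ to reach under an arbitrary condition $r\leq p$. Recall that $D_{\gamma+1}$ collects exactly those conditions arising as INC's response $\sigma((p_{\beta})_{\beta\leq\gamma})$ along a descending partial play which passes through the antichains $A_{\beta}$ for $\beta<\gamma$ and follows $\sigma$ at the odd stages below $\gamma$. Since $\gamma$ is an even successor, its predecessor $\gamma-1$ is odd, so by the inductive hypothesis (clause (2) of the construction) $A_{\gamma-1}$ is a maximal antichain below $p$. This maximality is the engine that drives the whole argument.

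First I would fix an arbitrary $r\leq p$ and use maximality of $A_{\gamma-1}$ below $p$ to produce some $q\in A_{\gamma-1}$ compatible with $r$; let $s$ be a common lower bound of $q$ and $r$. By clause (1) of the construction, the element $q$ determines a \emph{unique} descending play prefix $(p_{\beta})_{\beta\leq\gamma-1}$ with $p_{\gamma-1}=q$, where each $p_{\beta}\in A_{\beta}$ and $p_{\beta}=\sigma((p_{\delta})_{\delta<\beta})$ at odd $\beta$. Because this prefix is descending we have $q=p_{\gamma-1}\leq p_{\beta}$ for every $\beta\leq\gamma-1$, and as $s\leq q$ it follows that $s$ is a lower bound of the entire prefix $(p_{\beta})_{\beta<\gamma}$ (note $\{\beta:\beta<\gamma\}=\{\beta:\beta\leq\gamma-1\}$ since $\gamma$ is a successor). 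Hence $s$ is a legal move for COM at the even stage $\gamma$.

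Next I would set $p_{\gamma}:=s$ and let INC answer by the strategy, $p_{\gamma+1}:=\sigma((p_{\beta})_{\beta\leq\gamma})$. Since $\sigma$ always produces lower bounds, $p_{\gamma+1}\leq p_{\gamma}=s\leq r$. The sequence $(p_{\beta})_{\beta<\gamma+2}$ then witnesses $p_{\gamma+1}\in D_{\gamma+1}$: on $\beta<\gamma$ it coincides with the recovered prefix, so it lies in the antichains $A_{\beta}$ and follows $\sigma$ at the odd stages below $\gamma$, and it terminates with the prescribed value $p_{\gamma+1}$. Thus $p_{\gamma+1}\in D_{\gamma+1}$ with $p_{\gamma+1}\leq r$, which is exactly density below $p$.

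I expect the only delicate point here to be the legality bookkeeping, namely checking that the prefix furnished by clause (1) is genuinely descending (so that $s$ bounds \emph{all} earlier moves and not merely $q$) and that the extended sequence meets the precise conditions in the definition of $D_{\gamma+1}$. Both are immediate from clause (1) together with the fact that COM is entirely unconstrained in the completeness game, so this claim itself is routine. The genuine difficulty of the theorem lies not in the density claim but in the simultaneous construction of the even antichain $A_{\gamma}$ and the verification of the uniqueness assertion in clause (1), into which this density fact feeds.
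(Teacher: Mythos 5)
Your proof is correct and follows essentially the same route as the paper's: fix $r\leq p$, use maximality of the odd antichain $A_{\gamma-1}$ to find a compatible element with common lower bound $s$, recover the unique descending prefix via the inductive hypothesis, let COM play $s$ at stage $\gamma$, and let INC respond with $\sigma$, whose output lies below $s\leq r$ and witnesses membership in $D_{\gamma+1}$. The only difference is notational (your $r$, $q$, $s$ are the paper's $p''$, $p_{\gamma-1}$, $p^*$), so there is nothing to add.
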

		\begin{proof}
			Let $p''\leq p$ be arbitrary. By maximality of $A_{\gamma-1}$, there exists $p_{\gamma-1}\in A_{\gamma-1}$ compatible with $p''$, witnessed by some $p^*$. By the inductive hypothesis, there exists a unique sequence $\overline{p}=(p_{\beta})_{\beta<\gamma-1}$ with $p_{\beta}\in A_{\beta}$ for $\beta<\gamma-1$ and $p_{\beta}=\sigma((p_{\delta})_{\delta<\beta})$ for all odd $\beta\leq\gamma-1$. Hence, letting $s:=\overline{p}^{\frown}p_{\gamma-1}{}^{\frown}p^*$, $s^{\frown}\sigma(s)$ witnesses density, since $\sigma(s)\leq p^*\leq p''$.
		\end{proof}
		Let $A_{\gamma+1}\subseteq D_{\gamma+1}$ be a maximal antichain below $p$. For each $p_{\gamma+1}\in A_{\gamma+1}$, by the definition of $D_{\gamma+1}$, there exists a descending sequence $(p_{\alpha})_{\alpha<\gamma+1}$ such that for all $\alpha\leq\gamma+1$, $p_{\alpha}\in A_{\alpha}$ and if $\alpha\leq\gamma+1$ is odd, $p_{\alpha}=\sigma((p_{\beta})_{\beta<\alpha})$. Choose such a sequence for each $p_{\gamma+1}\in A_{\gamma+1}$ and let $A_{\gamma}$ consist of the $\gamma$th entries of these sequences.
		\begin{myclaim}
			For each $p_{\gamma+1}\in A_{\gamma+1}$, there exists a unique sequence $(p_{\beta})_{\beta<\gamma+1}$ such that for all $\beta\leq\gamma+1$, $p_{\beta}\in A_{\beta}$ and if $\beta\leq\gamma+1$ is odd, $p_{\beta}=\sigma((p_{\delta})_{\delta<\beta})$
		\end{myclaim}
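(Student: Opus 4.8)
The plan is to reduce uniqueness at the odd stage $\gamma+1$ to uniqueness at the even stage $\gamma$, exploiting two features of the construction: that $\sigma$ is a function, so the value at an odd position is completely determined by the play below it, and that when defining $A_\gamma$ we chose exactly one witnessing sequence for each element of $A_{\gamma+1}$. Existence is immediate and I would dispose of it first: for $p_{\gamma+1}\in A_{\gamma+1}$, the sequence selected when building $A_\gamma$ is descending, has $p_\beta\in A_\beta$ for every $\beta<\gamma$ (by the definition of $D_{\gamma+1}$), has $p_\gamma\in A_\gamma$ (since $A_\gamma$ was defined to consist of the $\gamma$th entries of exactly these sequences), and follows $\sigma$ at odd stages; so it witnesses existence. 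All the work lies in uniqueness.

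First I would establish the corresponding uniqueness statement one level down, at the even ordinal $\gamma$: for each $q\in A_\gamma$ there is a unique descending sequence $(p_\beta)_{\beta<\gamma}$ with $p_\beta\in A_\beta$, with $p_\beta=\sigma((p_\delta)_{\delta<\beta})$ for odd $\beta$, and ending with $p_\gamma=q$. This is the step I expect to carry the real content, and it is the only place where the antichain structure is used to force two a priori distinct plays to agree. Given two such sequences ending at $q$, each has a $(\gamma-1)$st entry lying in $A_{\gamma-1}$ and above $q$; since $A_{\gamma-1}$ is a (maximal) antichain by item (2), at most one of its elements can lie above $q$, so the two $(\gamma-1)$st entries coincide. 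The induction hypothesis, namely item (1) at the odd ordinal $\gamma-1$, then pins down the entire initial segment $(p_\beta)_{\beta<\gamma-1}$, and uniqueness at $\gamma$ follows.

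With uniqueness at $\gamma$ in hand, the claim itself is a short deterministic argument. Let $(p_\beta)_{\beta\leq\gamma+1}$ be any sequence satisfying the stated conditions and ending at the fixed $p_{\gamma+1}$. Its $\gamma$th entry $p_\gamma$ lies in $A_\gamma$, hence $p_\gamma$ is the $\gamma$th entry of the unique sequence chosen for some $p^\dagger_{\gamma+1}\in A_{\gamma+1}$. The restriction of that chosen sequence to $\beta\leq\gamma$ and the restriction $(p_\beta)_{\beta\leq\gamma}$ are both valid plays ending at $p_\gamma$, so by uniqueness at $\gamma$ they coincide; applying $\sigma$ to this common initial segment yields $p^\dagger_{\gamma+1}=\sigma((p_\beta)_{\beta\leq\gamma})=p_{\gamma+1}$, since $\gamma+1$ is odd. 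Thus $p_\gamma$ is forced to be the $\gamma$th entry of the sequence chosen for $p_{\gamma+1}$ itself, a single fixed condition, so $p_\gamma$ is uniquely determined, and uniqueness at $\gamma$ then determines $(p_\beta)_{\beta<\gamma}$ as well. The main obstacle is really just bookkeeping: one must keep straight that being odd makes $p_{\gamma+1}=\sigma((p_\beta)_{\beta\leq\gamma})$ a deterministic function of the play through $\gamma$, so that injectivity of the assignment $A_\gamma\to A_{\gamma+1}$ collapses to the single-choice convention together with uniqueness one level below.
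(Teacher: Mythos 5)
Your proof is correct and takes essentially the same route as the paper's: both use the antichain property of $A_{\gamma-1}$ together with the inductive hypothesis at $\gamma-1$ to force agreement of initial segments, and then conclude via the functionality of $\sigma$ combined with the one-chosen-witness-per-element construction of $A_{\gamma}$. The only difference is organizational — you factor out an explicit uniqueness-at-$\gamma$ lemma and argue directly, whereas the paper runs the same ingredients as a single argument by contradiction (applying the antichain/inductive-hypothesis step twice).
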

		\begin{proof}
			Let $p_{\gamma+1}\in A_{\gamma+1}$ and let $s$ be the chosen sequence witnessing $p_{\gamma+1}\in D_{\gamma+1}$. We will verify that any sequence as above is equal to $s$. So let $s'=(p_{\beta}')_{\beta\leq\gamma+1}$ be a different descending sequence such that for all $\beta\leq\gamma+1$, $p_{\beta}'\in A_{\beta}$ and if $\beta\leq\gamma+1$ is odd, $p_{\beta}'=\sigma((p_{\delta}')_{\delta<\beta})$ with $p_{\gamma+1}=p_{\gamma+1}'$.
			
			It follows that $p_{\gamma-1}'$ and $p_{\gamma-1}$ are compatible (witnessed by $p_{\gamma+1}=p_{\gamma+1}'$) and thus equal, as $A_{\gamma-1}$ is an antichain. By the inductive hypothesis $s\uhr\gamma=s'\uhr\gamma$, so $p_{\gamma}\neq p_{\gamma}'$. Since $p_{\gamma}'\in A_{\gamma}$ there is $a_{\gamma+1}\in A_{\gamma+1}$ (necessarily different from $p_{\gamma+1}$) and a sequence $t$ witnessing $a_{\gamma+1}\in D_{\gamma+1}$ such that $t(\gamma)=p_{\gamma}'$. Then $t(\gamma-1)$ and $p_{\gamma-1}'$ are compatible (witnessed by $p_{\gamma}'=t(\gamma)$) and hence equal. As before, this implies $t\uhr\gamma=s'\uhr\gamma$. However, this means that
			$$\sigma((p_{\delta}')_{\delta<\gamma+1})=p_{\gamma+1}'=p_{\gamma+1}\neq a_{\gamma+1}=\sigma(t\uhr\gamma+1)$$
			contradicting the fact that $\sigma$ is a function and $(p_{\delta}')_{\delta<\gamma+1}=t\uhr\gamma+1$.
		\end{proof}
		Assume $\gamma$ is a limit. Let $A_{\gamma}'$ be a common refinement of $A_{\alpha}$ for odd $\alpha<\gamma$. Given $p\in A_{\gamma}'$, let $p_{\alpha}\in A_{\alpha}$ witness refinement for odd $\alpha$ and let $p_{\alpha}\in A_{\alpha}$ witness $p_{\alpha+1}\in A_{\alpha+1}$ for even $\alpha$. Then $(p_{\alpha})_{\alpha<\gamma}$ is a play according to $\sigma$ by uniqueness (which implies that the sequences witnessing $p_{\alpha}\in A_{\alpha}$ are coherent). Let $D_{\gamma}$ be the downward closure of $A_{\gamma}'$ and let $D_{\gamma+1}$ consist of $\sigma(s)$ for sequences $s=(p_{\alpha})_{\alpha<\gamma+1}$ with $p_{\gamma}\in D_{\gamma}$ and $s\uhr\gamma$ witnessing this. Thus, $D_{\gamma+1}$ is dense and we can proceed as in the previous step: Let $A_{\gamma+1}\subseteq D_{\gamma+1}$ be a maximal antichain. Let $A_{\gamma}\subseteq D_{\gamma}$ contain one witness to $p\in D_{\gamma+1}$ for each $p\in A_{\gamma+1}$. Then clearly for any $p\in A_{\gamma+1}$ a sequence as claimed exists. As before, if there exist two sequences $t,t'$ for one $p\in A_{\gamma+1}$, $t\uhr\gamma=t'\uhr\gamma$, since $t(\gamma)\in A_{\gamma}\subseteq A_{\gamma'}$ and thus lies below exactly one element of each $A_{\alpha}$ for odd $\alpha$.
		
		Lastly, by Lemma \ref{StrongDistAntichain}, if $\dP$ is strongly ${<}\,\kappa$-distributive, then there exists a thread through $(A_{\alpha})_{\alpha\in\kappa\cap\Odd}$, i.e. a sequence $(p_{\alpha})_{\alpha\in\kappa\cap\Odd}$ such that for odd $\alpha$, $p_{\alpha}\in A_{\alpha}$. For even $\alpha$, let $p_{\alpha}\in A_{\alpha}$ witness $p_{\alpha+1}\in A_{\alpha+1}$. By uniqueness, $(p_{\alpha})_{\alpha<\kappa}$ is a play in $G(\dP,\kappa)$ according to $\sigma$. But this contradicts our assumption that $\sigma$ was a winning strategy.
	\end{proof}
	
	Since clearly at most one player can have a winning strategy in any game, we have the following:
	
	\begin{mylem}
		Let $\dP$ be a poset and $\kappa$ a cardinal. If $\dP$ is $\kappa$-strategically closed, then $\dP$ is strongly ${<}\,\kappa$-distributive.
	\end{mylem}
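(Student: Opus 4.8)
The plan is to derive this directly from Theorem \ref{DistCompletenessStrong}, which is the substantive result; the present lemma is a formal consequence requiring no further combinatorics on $\dP$ itself. The key observation is that the two hypotheses are related through the completeness game $G(\dP,\kappa)$: by definition, $\kappa$-strategic closure means precisely that COM has a winning strategy in $G(\dP,\kappa)$, while Theorem \ref{DistCompletenessStrong} characterizes strong ${<}\,\kappa$-distributivity as the statement that INC does \emph{not} have a winning strategy in $G(\dP,\kappa)$. So all that remains is to pass from ``COM has a winning strategy'' to ``INC has no winning strategy''.

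First I would recall the general and elementary fact (already flagged in the sentence preceding the statement) that in any two-player game of perfect information, at most one of the two players can possess a winning strategy. The argument is the standard one: if both COM and INC had winning strategies $\tau$ and $\sigma$ respectively, consider the unique play of $G(\dP,\kappa)$ in which COM follows $\tau$ and INC follows $\sigma$ throughout. This single run has a definite outcome, so exactly one of the two players wins it; but a winning strategy guarantees a win against \emph{every} opposing play, so both $\tau$ winning and $\sigma$ winning would be contradicted by whichever player actually loses this run. Hence at most one winning strategy can exist. Applying this with the hypothesis that COM has a winning strategy (from $\kappa$-strategic closure), I conclude that INC has no winning strategy in $G(\dP,\kappa)$, and Theorem \ref{DistCompletenessStrong} then immediately yields that $\dP$ is strongly ${<}\,\kappa$-distributive.

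There is essentially no obstacle here: the entire difficulty of the equivalence between strong distributivity and the non-existence of an INC winning strategy has already been absorbed into the proof of Theorem \ref{DistCompletenessStrong}, and the game-theoretic uniqueness of winning strategies is a triviality. The only point worth stating carefully is that one does not need any determinacy assumption on $G(\dP,\kappa)$ — we are only ruling out that INC wins, not asserting that COM wins when the closure hypothesis fails — so the argument goes through for arbitrary (possibly singular) $\kappa$ exactly as the statement is phrased.
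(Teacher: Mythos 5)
Your proposal is correct and matches the paper's own argument exactly: the paper derives this lemma in one line from Theorem \ref{DistCompletenessStrong} together with the observation that at most one player can have a winning strategy in any game. Your additional remarks (the play-strategies-against-each-other argument and the point that no determinacy of $G(\dP,\kappa)$ is needed) are accurate elaborations of that same reasoning.
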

	
	We will later give an example of a poset which is strongly ${<}\,\kappa$-distributive but not $\kappa$-strategically closed.
	
	The main point for introducing strong ${<}\,\kappa$-distributivity is the following strengthening of the Easton lemma. The second statement in the Lemma was also noticed (in a different form) by Andreas Lietz on Mathoverflow after a question by the author (see \cite{LietzAnswer}).
	
	\begin{mylem}\label{StrongEaston}
		Let $\kappa$ be a regular cardinal. Assume $\dQ$ is $\kappa$-cc and $\dP$ is strongly ${<}\,\kappa$-distributive.
		\begin{enumerate}
			\item $1_{\dP}\Vdash\check{\dQ}\text{ is }\check{\kappa}\text{-cc.}$
			\item $1_{\dQ}\Vdash\check{\dP}\text{ is strongly}<\check{\kappa}\text{-distributive}$.
		\end{enumerate}
	\end{mylem}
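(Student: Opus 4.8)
The plan is to prove the two parts by quite different means: (1) by a direct name argument that turns the strong distributivity of $\dP$ into a real antichain in $V$, and (2) via the game characterisation of Theorem \ref{DistCompletenessStrong}, transporting a putative winning strategy for INC from the extension back to $V$.

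For (1), I work in $V$ and suppose toward a contradiction that some $p\in\dP$ forces a name $\dot f$ to injectively enumerate an antichain of $\dQ$ of size $\kappa$. Since $p\Vdash\dot f\colon\check\kappa\to V$, Lemma \ref{UniformDecision} furnishes a descending sequence $(p_\alpha)_{\alpha<\kappa}$ below $p$ together with values $q_\alpha\in\dQ$ such that $p_\alpha\Vdash\dot f(\check\alpha)=\check q_\alpha$. For $\beta<\alpha$ we have $p_\alpha\le p_\beta$, so $p_\alpha$ decides both $\dot f(\check\beta)=\check q_\beta$ and $\dot f(\check\alpha)=\check q_\alpha$; as $\dot f$ is forced to enumerate an antichain, any $V$-witness to the compatibility of $q_\alpha$ and $q_\beta$ would be forced below $p_\alpha$ to witness the compatibility of $\dot f(\check\beta)$ and $\dot f(\check\alpha)$, so the $q_\alpha$ are pairwise incompatible in $V$, and in particular distinct. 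Hence $\{q_\alpha\;|\;\alpha<\kappa\}$ is an antichain of size $\kappa$ in $V$, contradicting the $\kappa$-cc of $\dQ$. It is exactly here that strong $<\kappa$-distributivity is needed: plain $<\kappa$-distributivity would not let the descending sequence be continued through limit stages.

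For (2), fix a $\dQ$-generic $H$; since $\dQ$ is $\kappa$-cc, $\kappa$ is still regular in $V[H]$. By Theorem \ref{DistCompletenessStrong} it suffices to show that INC has no winning strategy in $G(\dP,\kappa)$ in $V[H]$. Assuming otherwise, there are in $V$ a condition $q^*\in\dQ$ and a name $\dot\sigma$ with $q^*\Vdash$ ``$\dot\sigma$ is a winning strategy for INC in $G(\check\dP,\check\kappa)$''. Working in $V$, I attempt to build by recursion on $\alpha<\kappa$ a sequence of maximal antichains $(B_\alpha)_{\alpha<\kappa}$ of $\dQ$ below $q^*$, refining as $\alpha$ grows, together with, for each $b\in B_\alpha$, a descending sequence $\pi_b=(r^b_\beta)_{\beta<\alpha}$ in $\dP$ such that $b$ forces $\pi_b$ to be a partial play of $G(\dP,\kappa)$ of length $\alpha$ that follows $\dot\sigma$ at its odd stages. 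The successor steps are handled by the $\kappa$-cc: given the history, the name $\dot\sigma(\mathrm{history})$ is decided across a maximal antichain of $\dQ$ of size $<\kappa$, and I refine $B_\alpha$ and extend each $\pi_b$ accordingly. Once the whole system is built, any $\dQ$-generic filter $H\ni q^*$ selects $b_\alpha\in H\cap B_\alpha$, and by coherence the plays $\pi_{b_\alpha}$ amalgamate into a single play of length $\kappa$ in $V[H]$ following $\sigma=\dot\sigma^H$, contradicting that $\sigma$ is winning for INC.

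The hard part is the limit stages of this recursion, and this is where strong $<\kappa$-distributivity of $\dP$ in $V$ is indispensable. A single descending sequence of conditions in $\dQ$ cannot be carried through a limit below $\kappa$, since $\dQ$ need not be $<\kappa$-closed; this is precisely why the bookkeeping must track \emph{maximal antichains} of $\dQ$ (kept of size $<\kappa$ by the $\kappa$-cc) rather than single conditions. At a limit $\lambda$ each branch-play $\pi_b\uhr\lambda$ is a descending sequence in $\dP$ that must be given a lower bound $r^b_\lambda$, and for arbitrary descending sequences such bounds need not exist, as $\dP$ is only distributive and not closed. The remedy is to produce the whole branching system as a thread through a single refining sequence of maximal antichains of $\dP$ — one antichain coding each level of the ($<\kappa$-branching, by $\kappa$-cc) system — and to invoke Lemma \ref{StrongDistAntichain}, whose thread supplies \emph{coherent} lower bounds at every limit simultaneously. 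The essential feature of strong, as opposed to plain, distributivity is exactly this: the limit lower bounds may be chosen to depend on, and cohere with, the entire prior history rather than merely lying in some prescribed dense set, which is what lets the construction survive all $\kappa$ limit stages and thereby reach the contradiction.
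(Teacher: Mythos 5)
Your part (1) is correct and is exactly the paper's argument: it is Lemma \ref{PreservationProp}(2), proved via Lemma \ref{UniformDecision}. The gap is in part (2). Your reduction to ``INC has no winning strategy in $G(\dP,\kappa)$ in $V[H]$'' is a legitimate target (by Theorem \ref{DistCompletenessStrong} applied in $V[H]$), but the strategy-transport recursion does not survive limit stages, in two distinct ways. First, on the $\dQ$ side: at a limit $\lambda<\kappa$ you need a maximal antichain $B_\lambda$ refining every $B_\alpha$, $\alpha<\lambda$, and for a poset that is merely $\kappa$-cc such a common refinement need not exist -- its existence at all limits is essentially ${<}\,\kappa$-distributivity of $\dQ$, which is not assumed. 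Concretely, take $\dQ=\Add(\omega)$ (which is $\kappa$-cc for every regular $\kappa$, and is precisely the case the paper needs later) and let $B_n$ be the set of conditions with domain $n$: each $B_{n+1}$ refines $B_n$, but no condition lies below an element of every $B_n$, so no $B_\omega$ exists. Worse, the thread $(b_\alpha)_{\alpha<\lambda}$ selected by the generic need not lie below any single condition of $\dQ$, so level $\lambda$ of your system cannot be indexed by an antichain of $\dQ$ at all. Second, on the $\dP$ side: your appeal to Lemma \ref{StrongDistAntichain} does not deliver what you need. A thread through a refining sequence of maximal antichains of $\dP$ is a \emph{single} descending sequence -- coherent choices along one branch fixed in advance in $V$ -- whereas your amalgamation argument needs lower bounds $r^b_\lambda$ for \emph{every} branch simultaneously, since which branch $H$ selects is not known in $V$. (Moreover, the $\dP$-conditions at a fixed level of your system, indexed by different $\dQ$-branches, need not be pairwise incompatible, so ``one antichain coding each level'' is not well defined.) Strong ${<}\,\kappa$-distributivity emphatically does not provide lower bounds for arbitrary descending sequences -- that would be closure; the paper's example $\dP([\omega_2]^{<\omega_1}\cap V)$ is strongly ${<}\,\omega_1$-distributive yet has descending $\omega$-sequences with no lower bound.

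The paper's proof is structured precisely to avoid both problems, and never builds a tree of plays. Given open dense $D\subseteq\dQ\times\dP$ and $q\in\dQ$, it first proves that the set $D_q$ of those $p\in\dP$ for which some maximal antichain $A\subseteq\dQ$ below $q$ satisfies $A\times\{p\}\subseteq D$ is dense in $\dP$. This is done by running the completeness game \emph{on $\dP$ in $V$} (the proof's reference to $G(\dQ,\kappa)$ is a typo; INC's moves are $\dP$-conditions): INC's strategy enlarges an auxiliary antichain of $\dQ$ by one element per successor step, using only density and openness of $D$; if that antichain were ever maximal, openness would force the current $\dP$-condition into $D_q$; limit stages are COM's responsibility inside the game; and a full play of length $\kappa$ would produce a $\kappa$-sized antichain in $\dQ$. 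Thus INC would have a winning strategy, contradicting Theorem \ref{DistCompletenessStrong} for $\dP$ in $V$. Then, given a $\dQ$-name $\dot f$ for a $\kappa$-sequence of open dense subsets of $\dP$, the sets $D_\alpha'$ of conditions $p'$ admitting such a full antichain inside $D_\alpha=\{(q',p')\;|\;q'\Vdash\check p'\in\dot f(\check\alpha)\}$ are open dense in $V$, and any $p'\in D_\alpha'$ satisfies $q\Vdash\check p'\in\dot f(\check\alpha)$ by maximality; a single thread through $(D_\alpha')_{\alpha<\kappa}$, produced in $V$ by strong distributivity, is therefore forced by $q$ to be a thread through $\dot f$. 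So the game characterization is indeed the right tool, but it must be applied to $\dP$ in the ground model, where it absorbs exactly the limit-stage difficulty your construction cannot handle, rather than used to pull a strategy of INC from $V[H]$ back into $V$.
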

	
	\begin{proof}
		Part (1) was shown in Lemma \ref{PreservationProp} (2) so we show (2).
		\setcounter{myclaim}{0}
		We first prove the following helpful claim:
		\begin{myclaim}
			If $D\subseteq\dQ\times\dP$ is open dense and $q\in\dQ$, the set $D_q$ consisting of all $p\in\dP$ such that for some $A\subseteq\dQ$ that is a maximal antichain below $q$, $A\times\{p\}\subseteq D$, is open dense in $\dP$.
		\end{myclaim}
		\begin{proof}
			Openness is clear: If $A$ witnesses $p\in D_q$ and $p'\leq p$, $A$ also witnesses $p'\in D_q$.
			
			Thus, assume the set is not dense and there is $p\in\dP$ such that for every $p'\leq p$, $p'\notin D_q$. We will give a winning strategy for INC in $G(\dQ,\kappa)$. We will assume that whenever $(p_{\alpha})_{\alpha<\gamma}$ has been played according to our strategy, we have constructed an antichain $\{q_{\alpha}\;|\;\alpha\in\gamma\cap\Odd\}$ below $q$ such that for any $\alpha<\gamma$, $(q_{\alpha},p_{\alpha})\in D$. To begin, let INC find a pair $(q_1,p_1)\leq(q,p)$ with $(q_1,p_1)\in D$ and play $p_1$.
				
			Assume the game has lasted until $\gamma$, $\gamma+1$ is odd and the position is $(p_{\alpha})_{\alpha<\gamma+1}$. If $\{q_{\alpha}\;|\;\alpha\in\gamma\cap\Odd\}$ is maximal, it witnesses $p_{\gamma}\in D$ by the openness of $D$: For every $\alpha\in\gamma\cap\Odd$, $(q_{\alpha},p_{\alpha})\in D$ and thus $(q_{\alpha},p_{\gamma})\in D$. This contradicts our assumption, since $p_{\gamma}\leq p$. It follows that there exists some $q_{\gamma+1}'$ which is incompatible with every $q_{\alpha}$. By open density, there exists $(q_{\gamma+1},p_{\gamma+1})\leq(q_{\gamma+1}',p_{\gamma})$, $(q_{\gamma+1},p_{\gamma+1})\in D$. Let INC play $p_{\gamma+1}$.
			
			This strategy is a winning strategy, because a play of length $\kappa$ would give us a $\kappa$-sized antichain in $\dQ$, contradicting the $\kappa$-cc of $\dQ$. However, the existence of this winning strategy contradicts Theorem \ref{DistCompletenessStrong}. So the claim is true.
		\end{proof}
		Now assume $\dot{f}$ and $\tau$ are $\dQ$-names such that some $q\in\dQ$ forces $\dot{f}$ to map $\check{\kappa}$ to open dense subsets of $\dP$ and $\tau$ to be an element of $\dP$. Strengthening $q$ if necessary, we can assume $q\Vdash\tau=\check{p}$ for some $p\in\dP$.
		\begin{myclaim}
			Each set $D_{\alpha}:=\{(q',p')\in\dQ\uhr q\times\dP\;|\;q'\Vdash\check{p}'\in\dot{f}(\check{\alpha})\}$ is open dense.
		\end{myclaim}
		\begin{proof}
			Openness in both coordinates follows either from the properties of the forcing relation or from $\dot{f}(\check{\alpha})$ being forced by $q$ to be open.
				
			For density, let $(q',p')\in\dQ\uhr q\times\dP$ be arbitrary. Thus $q'\Vdash\exists\tau(\tau\in\dot{f}(\check{\alpha})\wedge\tau\leq \check{p}')$. Because $\tau$ is in particular forced to be in $V$, there exists $q''\leq q'$ and $p''$ such that
			$$q''\Vdash(\check{p}''\in\dot{f}(\check{\alpha})\wedge \check{p}''\leq\check{p}')$$
			Thus, $(q'',p'')\leq(q',p')$ and $(q'',p'')\in D_{\alpha}$
		\end{proof}
		Combining the two claims, for each $\alpha$, the set $D_{\alpha}'$ of all $p'\in\dP$ such that for some $A\subseteq\dQ$ that is a maximal antichain below $q$ we have $A\times\{p'\}\subseteq D_{\alpha}$ is open dense in $\dP$. If $p'\in D_{\alpha}'$, there exists a maximal antichain $A$ below $q$ such that for every $q'\in A$, $q'\Vdash\check{p}'\in\dot{f}(\check{\alpha})$. By maximality, $q\Vdash\check{p}'\in\dot{f}(\check{\alpha})$.
			
		Let $(p_{\alpha})_{\alpha<\kappa}$ be a thread through $(D_{\alpha}')_{\alpha<\kappa}$ below $p$. Then $q$ forces $(\check{p}_{\alpha})_{\alpha<\kappa}$ to be a thread through $\dot{f}$ below $\check{p}$.
	\end{proof}
	
	In particular, if $\dP$ is ${<}\,\kappa$-closed and $\dQ$ is $\kappa$-cc., $\dP$ is strongly ${<}\,\kappa$-distributive after forcing with $\dQ$.
	
	We close this section by providing two examples of strongly distributive forcings.
	
	\begin{myex}
		For $S$ a stationary subset of $[\omega_2]^{<\omega_1}$, let $\dP(S)$ be the forcing consisting of increasing and continuous functions $p\colon\alpha\to S$ where $\alpha<\omega_1$ is a successor ordinal. $\dP(S)$ collapses $\omega_2$ to $\omega_1$ by adding a cofinal sequence $(x_{\alpha})_{\alpha<\omega_1}$ of elements of $S$. Krueger showed (see \cite[Proposition 2.2]{KruegerMitchellStyle}) that the term ordering on $\Add(\omega)*\dP([\omega_2]^{<\omega_1}\cap V)$ is $\omega_1$-strategically closed. This shows that $\dP([\omega_2]^{<\omega_1}\cap V)$ is strongly ${<}\,\omega_1$-distributive in $V[G]$ where $G$ is $\Add(\omega)$-generic. However, if $H$ is now $\Coll(\omega_1,\omega_2)$-generic over $V[G]$, $\dP([\omega_2]^{<\omega_1}\cap V)$ is no longer strongly ${<}\,\omega_1$-distributive because it destroys the stationarity of the stationary set $[\omega_2^V]^{<\omega_1}\smallsetminus V$ even though $|\omega_2^V|=\omega_1$ in $V[G][H]$ (see Fact \ref{KruegerFact}).
	\end{myex}
	
	This example shows the following:
	
	\begin{enumerate}
		\item A strongly ${<}\,\omega_1$-distributive forcing need not be proper: In $V[G]$, the forcing $\dP([\omega_2]^{<\omega_1}\cap V)$ destroys the stationarity of the stationary set $[\omega_2]^{<\omega_1}\smallsetminus V$ (again, see Fact \ref{KruegerFact}).
		\item A strongly ${<}\,\omega_1$-distributive forcing need not remain strongly ${<}\,\omega_1$-distributive in a countably closed forcing extension.
		\item A strongly ${<}\,\omega_1$-distributive forcing need not be ${<}\,\omega_1$-strategically closed.
	\end{enumerate}
	
	\begin{myex}\label{Example2}
		Let $\kappa$ be a regular cardinal and $\dS(\kappa)$ the poset adding a $\square_{\kappa}$-sequence by initial segments (see e.g. \cite[Example 6.6]{CummingsHandbook}): $p\in\dS(\kappa)$ if and only if:
		\begin{enumerate}
			\item $\dom(p)=\beta+1\cap\Lim$ for some limit ordinal $\beta\in\kappa^+$.
			\item For $\alpha\in\dom(p)$, $p(\alpha)$ is club in $\alpha$ and $\otp(p(\alpha))\leq\kappa$.
			\item If $\alpha\in\dom(p)$ then for all $\beta\in\lim(p(\alpha))$, $p(\alpha)\cap\beta= p(\beta)$.
		\end{enumerate}
		we order $\dS(\kappa)$ by end-extension.
		
		$\dS(\kappa)$ is ${<}\,\kappa^+$-strategically closed. However, $\dS(\kappa)$ is not strongly ${<}\,\kappa^+$-distributive if $\square_{\kappa}$ fails: For $\alpha<\kappa^+$ a limit ordinal, let $D_{\alpha}$ consist of all $p\in\dS(\kappa)$ with $\alpha\in\dom(p)$ (let $D_{\alpha}:=\dS(\kappa)$ if $\alpha$ is not a limit). Each $D_{\alpha}$ is open dense, but given any descending sequence $(p_{\alpha})_{\alpha<\kappa^+}$ of elements of $\dS(\kappa)$ with $p_{\alpha}\in D_{\alpha}$ for every $\alpha\in\kappa^+$, $\bigcup_{\alpha<\kappa^+}p_{\alpha}$ is a $\square_{\kappa}$-sequence, a contradiction.
	\end{myex}
	
	Combined, these two examples show that there is no provable relationship between ${<}\,\kappa^+$-strategic closure and strong ${<}\,\kappa^+$-distributivity. However, assuming $\square_{\kappa}$ holds, Ishiu and Yoshinobu showed (see \cite[Theorem 3.3]{IshiuYoshinobuDirectiveTreesGamesPosets}) that any ${<}\,\kappa^+$-strategically closed poset is also $\kappa^+$-strategically closed and thus strongly ${<}\,\kappa^+$-distributive. So in some sense Example \ref{Example2} is canonical.
	
	\section{The Mitchell Forcing}

	In this section we will introduce the forcing that was used by Levine to obtain disjoint stationary sequences on $\aleph_2$ and $\aleph_3$ simultaneously.
	
	To motivate his definition, we first state the following fact due to Krueger which is crucial for our arguments:
	
	\begin{mydef}
		Let $\tau\leq\mu\leq\Theta$ be cardinals and $N\in[H(\Theta)]^{<\mu}$. Then $N$ is \emph{weakly internally approachable of length $\tau$} if $N=\bigcup_{i<\tau}N_i$, where $(N_i)_{i<j}\in N$ for every $j<\tau$. We let $\IA(\tau)$ be the collection of all $N$ (for any $\Theta$ and $\mu$) which are weakly internally approachable of length $\tau$.
	\end{mydef}
	
	\begin{myfact}[{\cite[Theorem 7.1]{KruegerApplicMSI}}]\label{KruegerFact}
		Suppose $V\subseteq W$ are models of $\ZFC$ with the same ordinals and there is a real in $W\smallsetminus V$. Let $\mu$ be a regular uncountable cardinal in $W$ and let $X$ be a set in $V$ such that $(\mu^+)^W\subseteq X$. In $W$ let $\Theta\geq\mu^+$ be a regular cardinal such that $X\subseteq H(\Theta)$. Then in $W$ the collection of all $N\in[H(\Theta)]^{<\mu}\cap\IA(\omega)$ with $N\cap X\notin V$ is stationary.
	\end{myfact}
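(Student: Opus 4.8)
The plan is to verify stationarity through the Menas characterization recalled in the preliminaries: fixing an arbitrary $F\colon[H(\Theta)]^{<\omega}\to[H(\Theta)]^{<\mu}$, I must produce a single $N\in[H(\Theta)]^{<\mu}\cap\IA(\omega)$ which is closed under $F$ and satisfies $N\cap X\notin V$. Since I am free to strengthen $F$, I would first replace it by a function that also dominates a fixed set of Skolem functions for the structure $(H(\Theta),\in,\lhd,X,\mu,r)$, where $\lhd$ well-orders $H(\Theta)$ and $r\in W\smallsetminus V$ is the given real, so that any $N$ closed under $F$ is automatically an elementary submodel of that structure. It is also convenient to reduce the target: writing $\lambda:=(\mu^+)^W$, the hypothesis gives $\lambda\subseteq X$, and $\lambda$ is an ordinal, hence lies in $V$; therefore $N\cap\lambda=(N\cap X)\cap\lambda$, and $N\cap X\in V$ would force $N\cap\lambda\in V$. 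So it suffices to arrange $N\cap\lambda\notin V$.

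Next I would build the witness as an $\omega$-chain. Construct an increasing sequence $(N_n)_{n<\omega}$ of elementary submodels of $(H(\Theta),\in,\lhd,X,\mu,r)$, each of size $<\mu$ and closed under $F$, with $r,X,\mu\in N_0$ and, crucially, $(N_i)_{i\le n}\in N_{n+1}$ for every $n$. Setting $N:=\bigcup_{n<\omega}N_n$, closure under $F$ is inherited by the union and the condition $(N_i)_{i<j}\in N$ witnesses $N\in\IA(\omega)$. Because $\lambda$ is regular in $W$ and $|N_n|<\mu<\lambda$, each $\delta_n:=\sup(N_n\cap\lambda)$ is below $\lambda$; the $\delta_n$ are (after thinning) strictly increasing, their supremum $\delta=\sup(N\cap\lambda)$ has $\cf^W(\delta)=\omega$, and since $N_n\in N_{n+1}$ each $\delta_n$ is definable in $N_{n+1}$ and hence lies in $N\cap\lambda$.

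The heart of the argument is to drive this construction with the new real so that $N\cap\lambda\notin V$. The idea is to encode $r$ into the top structure of the set $N\cap\lambda$, just below $\delta$, via an invariant that is absolutely definable from $N\cap\lambda$ alone --- for instance the increasing enumeration of the limit points of $N\cap\lambda$ together with the pattern of order types of $N\cap\lambda$ in the successive intervals they determine. At stage $n$ I would use the bit $r(n)$ to dictate this pattern on $[\delta_n,\delta_{n+1})$, always realizing the prescribed value by \emph{adding} suitable ordinals below $\delta_{n+1}$ into $N_{n+1}$, never by trying to keep ordinals out. If this invariant, computed from $N\cap\lambda$, recovers $r$ in a way that is absolute between $V$ and $W$, then $N\cap\lambda\in V$ would yield $r\in V$, contradicting the choice of $r$; hence $N\cap\lambda\notin V$, as required.

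The main obstacle is exactly this encoding step, and the difficulty is the interaction of the coding with elementarity: an elementary submodel automatically contains every ordinal definable from its parameters, so one cannot code $r$ by the presence or absence of a fixed $V$-sequence of ordinals, and one must instead place the code in the sparse part of $N\cap\lambda$ that is cofinal in $\delta$ and read it off by an intrinsic order-theoretic invariant. The delicate points to check are that the chosen invariant is genuinely stable under the later stages of the construction --- later models must not create new limit points or alter the recorded order types inside the already-treated intervals --- that it is absolute between $V$ and $W$, and that all of this coheres with keeping $|N_n|<\mu$ and with the requirement $(N_i)_{i\le n}\in N_{n+1}$ needed for $\IA(\omega)$. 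Once the invariant is arranged to be both controllable and absolute, the contradiction with $r\notin V$ closes the argument.
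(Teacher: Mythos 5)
First, a point of order: the paper does not prove this statement at all --- it is quoted as a Fact with a citation to Krueger \cite[Theorem 7.1]{KruegerApplicMSI}, whose proof in turn adapts Gitik's theorem that adding a real makes the ground-model elements of $[\kappa^+]^{<\kappa}$ costationary. So your attempt must be measured against Krueger's argument, not against anything in this paper. Your framework is the correct one and matches the standard approach: Menas' criterion, Skolem hulls of a structure with a built-in well-order, an increasing chain $(N_n)_{n<\omega}$ with $(N_i)_{i\le n}\in N_{n+1}$ to witness $\IA(\omega)$, and the correct reduction to arranging $N\cap\lambda\notin V$ for $\lambda=(\mu^+)^W$.

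However, there is a genuine gap, and it sits exactly where you flag it: the encoding step is not a technical detail to be arranged later --- it \emph{is} the theorem, and the invariant you propose cannot be made to work as described. Two things fail. First, stability: when you pass from $N_n$ to the $F$-closed hull $N_{n+1}$ of $N_n$ together with new ordinals, the hull will in general acquire new ordinals \emph{below} $\delta_n$; for instance, if the algebra $F$ includes a ladder system $\langle e_\gamma:\gamma<\lambda\rangle$, then every new ordinal $\gamma$ drags the cofinal set $e_\gamma$ into the model, and nothing prevents $e_\gamma$ from depositing new elements (hence new limit points and altered order types) inside the intervals $(\delta_m,\delta_{m+1})$ for $m<n$ that you have already ``recorded''. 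You cannot simply choose the added ordinals to avoid this, because for an arbitrary $F$ the existence of such non-polluting extensions is precisely what would need proof. Second, decodability: even if the pattern were stable, the decoder working in $V$ has no access to $F$ or to your construction, so it has no way to identify which limit points of $N\cap\lambda$ are the markers $\delta_n$; your invariant is ``intrinsic'' only if the treated intervals contain no spurious limit points, which is the same unproved stability claim. Krueger's actual proof (following Gitik) circumvents both problems with a different mechanism: it argues by contradiction (so that the traces of \emph{all} auxiliary $F$-closed models may be assumed to lie in $V$), builds a binary \emph{tree} of models $\langle N_s : s\in 2^{<\omega}\rangle$ with a splitting condition at each node, and uses the new real only to select a branch, with the decoding organized so that it can be carried out in $V$ from the branch model's trace. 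Since the sentence ``once the invariant is arranged to be both controllable and absolute'' is exactly the point at which this machinery would have to be supplied, what you have is a correct reduction plus an acknowledged missing core, not a proof.
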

	
	The upshot of Fact \ref{KruegerFact} is that stationary sets consisting of weakly internally approachable models are preserved by sufficiently closed forcings:
	
	\begin{myfact}[{\cite[Lemma 2.2 and 2.4]{KruegerApplicMSI}}]\label{IAPreserve}
		Let $\tau<\mu$ be cardinals and $\Theta\geq\mu$. If $S\subseteq[H(\Theta)]^{<\mu}\cap\IA(\tau)$ is stationary and $\dP$ is a ${<}\,\mu$-closed poset, $\dP$ forces that $S$ remains stationary in $[H^V(\Theta)]^{<\mu}$.
	\end{myfact}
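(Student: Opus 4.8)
The plan is to verify the stationarity of $S$ in $V[G]$ directly through Menas' characterisation of clubs. First I would record the single structural consequence of ${<}\,\mu$-closure on which everything rests: a ${<}\,\mu$-closed forcing adds no new ${<}\,\mu$-sequences of ground model elements, so $([H^V(\Theta)]^{<\mu})^{V[G]}=([H^V(\Theta)]^{<\mu})^V$; in particular every $N\in S$ still lies in $[H^V(\Theta)]^{<\mu}$ in $V[G]$, and any name for a function into $[H^V(\Theta)]^{<\mu}$ takes ground model values. Fixing a name $\dot f$ and a condition $p$ with $p\Vdash\dot f\colon[H^V(\Theta)]^{<\omega}\to[H^V(\Theta)]^{<\mu}$, it then suffices by Menas' lemma to produce $N\in S$ and $q\leq p$ with $q\Vdash$ ``$N$ is closed under $\dot f$'', i.e. $q\Vdash\dot f(a)\subseteq N$ for every $a\in[N]^{<\omega}$.

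Next I would choose the model over which to work. Fix a large regular $\chi$ with $\dP,\dot f,p,S\in H(\chi)$ and a well-order $<^*$ of $H(\chi)$. The set of $M\prec(H(\chi),\in,<^*)$ with $\dP,\dot f,p,S\in M$ and $M\cap\mu\in\mu$ is club in $[H(\chi)]^{<\mu}$ (using that $\mu$ is regular), while by the projection property of clubs (so that preimages of stationary sets under $M\mapsto M\cap H^V(\Theta)$ are stationary) the set of $M$ with $M\cap H^V(\Theta)\in S$ is stationary. Intersecting, I obtain $M\prec H(\chi)$ with all these properties and put $N:=M\cap H^V(\Theta)\in S$. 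Since $S\subseteq\IA(\tau)$, $N$ carries a witnessing sequence $(N_i)_{i<\tau}$ with $N=\bigcup_{i<\tau}N_i$ and $(N_i)_{i<j}\in N$ for each $j<\tau$. The crucial point is that, as $N\subseteq M$, \emph{every proper initial segment} $(N_i)_{i<j}$ is an element of $M$.

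The construction then builds a descending sequence $(q_j)_{j<\tau}$ with $q_0\leq p$ by recursion: $q_{j+1}$ is the $<^*$-least lower bound of a canonical descending sequence below $q_j$ deciding $\dot f(a)$ for every $a\in[\bigcup_{i\leq j}N_i]^{<\omega}$ (there are only ${<}\,\mu$ many such $a$, so that inner sequence has length ${<}\,\mu$ and its lower bound is supplied by ${<}\,\mu$-closure), and at limits $q_j$ is the $<^*$-least lower bound of $(q_i)_{i<j}$. Writing $q_{j+1}\Vdash\dot f(a)=\check v_a$, I claim each $v_a\subseteq N$. By induction every initial segment $(q_i)_{i\leq j}$ is definable in $(H(\chi),\in,<^*)$ from $\dot f,p$ and $(N_i)_{i<j}\in M$, hence lies in $M$ by elementarity, so the canonically chosen values $v_a$ also lie in $M$. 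As $v_a\in[H^V(\Theta)]^{<\mu}$, elementarity gives $\lambda<\mu$ and a surjection $e\colon\lambda\to v_a$ with $\lambda,e\in M$; since $\lambda\in M\cap\mu\in\mu$ we get $\lambda\subseteq M$, so $v_a=e[\lambda]\subseteq M$, and as $v_a\subseteq H^V(\Theta)$ this yields $v_a\subseteq M\cap H^V(\Theta)=N$. Finally ${<}\,\mu$-closure supplies a lower bound $q$ of $(q_j)_{j<\tau}$ (as $\tau<\mu$); since $N=\bigcup_{j<\tau}N_j$, every $a\in[N]^{<\omega}$ lies in some $[\bigcup_{i\leq j}N_i]^{<\omega}$, whence $q\leq q_{j+1}\Vdash\dot f(a)\subseteq N$. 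Thus $q$ forces $N\in S$ to be a closure point of $\dot f$, completing the argument.

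The main obstacle is precisely the claim that the conditions $q_j$ and the decided values $v_a$ never leave $M$. This is where weak internal approachability is indispensable: it is only because each initial segment of the witnessing sequence already belongs to $N\subseteq M$ that the canonical recursion is definable from parameters in $M$ and therefore stays inside $M$, and it is only the requirement $M\cap\mu\in\mu$ that upgrades ``$v_a\in M$'' to ``$v_a\subseteq N$''. Everything else — the lower bounds at the $\tau$ outer stages and at the ${<}\,\mu$-length inner stages — is furnished uniformly by ${<}\,\mu$-closure.
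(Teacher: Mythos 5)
The paper does not prove this statement itself (it is quoted as a Fact with a citation to Krueger), and your argument is correct and is essentially the cited proof: lift $S$ to an elementary $M\prec(H(\chi),\in,<^*)$ with $M\cap\mu\in\mu$ and $N:=M\cap H^V(\Theta)\in S$, then use the weak internal approachability of $N$ — whose initial segments lie in $N\subseteq M$ — to keep the canonical decreasing sequence of conditions and the decided values of $\dot f$ inside $M$, so that each value, having size ${<}\,\mu$, is a subset of $N$, and finish with ${<}\,\mu$-closure. The only glossed point is the standard one of making the $\dP$-decision relation for $\dot f$ available to the structure over which $M$ is elementary (e.g.\ by adding it as a predicate), which is routine and does not affect the argument.
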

	
	Krueger's idea to obtain a disjoint stationary sequence on some cardinal $\mu^+$ can be summarized as follows: After forcing with $\Add(\omega)$ -- thanks to Fact \ref{KruegerFact} -- there are stationarily many $N\in[H(\Theta)]^{<\mu}\cap\IA(\omega)$ with $N\cap\mu^+\notin V$. By the weak internal approachability this stationarity is preserved when forcing with the Levy-collapse $\Coll(\mu,\Theta)$. And so by iterating forcings of the form $\Add(\omega)*\dot{\Coll}(\check{\mu},\check{\mu}^+)$ with Mahlo length one obtains a disjoint stationary sequence on a stationary subset $S\subseteq\mu^+$ (consisting of the previously inaccessible cardinals) by choosing $\mathcal{S}_{\alpha}$ to consist of precisely those elements of $[\alpha]^{<\mu}$ which were added at stage $\alpha+1$.
	
	Krueger took care of such an iteration by using his method of \emph{mixed support iterations} (see \cite{KruegerMitchellStyle}). Levine noted that the projection analysis provided through the use of Mitchell forcing leads to better preservation properties which allowed him to obtain disjoint stationary sequences on $\aleph_2$ and $\aleph_3$ simultaneously.
	
	For a regular cardinal $\tau$ and a set $Y$ we let $\Add^*(\tau,Y)$ consist of all functions $p\colon\{\delta\in Y\;|\;\delta\text{ is inaccessible}\}\times\tau\to\{0,1\}$, ordered by $\supseteq$.
	
	\begin{mydef}
		Let $\lambda$ be inaccessible and let $\tau<\mu<\lambda$ be regular cardinals such that $\tau^{<\tau}=\tau$. We let $\dM^+(\tau,\mu,\lambda)$ consist of pairs $(p,q)$ such that
		\begin{enumerate}
			\item $p\in\Add^*(\tau,\lambda)$.
			\item $q$ is a function such that
			\begin{enumerate}
				\item $\dom(q)$ is a ${<}\,\mu$-sized set such that for each $\delta\in\dom(q)$ $\delta=\nu+1$ for an inaccessible cardinal $\nu<\lambda$,
				\item whenever $\delta\in\dom(q)$, $q(\delta)$ is an $\Add^*(\tau,\delta+1)$-name for a condition in $\dot{\Coll}(\check{\mu},\check{\delta})$.
			\end{enumerate}
		\end{enumerate}
		We let $(p',q')\leq(p,q)$ if and only if
		\begin{enumerate}
			\item $p'\leq p$ in $\Add^*(\tau,\lambda)$,
			\item $\dom(q')\supseteq\dom(q)$ and whenever $\delta\in\dom(q)$,
			$$p'\uhr((\delta+1)\times\tau)\Vdash q'(\check{\delta})\leq q(\check{\delta})$$
		\end{enumerate}
	\end{mydef}
	
	This forcing is quite similar to the original poset Mitchell used to obtain the tree property at an arbitrary double successor cardinal (see \cite{MitchellTreeProp}). One crucial difference is the ``de-coupling'' of $\tau$ and $\mu$ (otherwise $\mu$ is often fixed as $\tau^+$). This is necessary because there is no currently known analogue of Fact \ref{KruegerFact} which works for $\Add(\tau)$ when $\tau>\omega$ (see \cite[Question 12.6]{KruegerApplicMSI}), so we have to add reals if we want to obtain $\DSS$.
	
	One important definition regarding variants of Mitchell forcing is the \emph{term ordering} from which many regularity properties are derived:
	
	\begin{mydef}
		Let $\lambda$ be inaccessible and let $\tau<\mu<\lambda$ be regular cardinals such that $\tau^{<\tau}=\tau$. We let $\dT(\tau,\mu,\lambda)$ consist of all $(p,q)\in\dM^+(\tau,\mu,\lambda)$ such that $p=\emptyset$, ordered as a suborder of $\dM^+(\tau,\mu,\lambda)$.
	\end{mydef}
	
	We now collect some facts about $\dM^+$ and $\dT$ in the following ``omnibus lemma'' (the proofs are found in \cite[Section 1.3]{LevineDisjointStatSeq}):
	
	\begin{mylem}\label{OmnibusLemma}
		Let $\lambda$ be inaccessible and let $\tau<\mu<\lambda$ be regular cardinals such that $\tau^{<\tau}=\tau$.
		\begin{enumerate}
			\item $\dT(\tau,\mu,\lambda)$ is ${<}\,\mu$-closed.
			\item There is a projection from $\Add^*(\tau,\lambda)\times\dT(\tau,\mu,\lambda)$ onto $\dM^+(\tau,\mu,\lambda)$.
			\item $\dM^+(\tau,\mu,\lambda)$ is ${<}\,\tau$-closed, $\lambda$-cc and preserves all cardinals up to and including $\mu$ as well as above and including $\lambda$.
			\item $\dM^+(\tau,\mu,\lambda)$ forces $2^{\tau}=2^{\mu}=\mu^+=\lambda$.
		\end{enumerate}
	\end{mylem}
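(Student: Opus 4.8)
The plan is to establish the four clauses in order, since each leans on the previous ones; I expect the chain condition and the preservation of cardinals in the interval $(\tau,\mu]$ in clause (3) to be the main obstacle. For clause (1), I would take a $\leq_{\dT}$-descending sequence $((\emptyset,q_\xi))_{\xi<\gamma}$ with $\gamma<\mu$ and build a lower bound $(\emptyset,q)$ directly. Put $\dom(q)=\bigcup_{\xi<\gamma}\dom(q_\xi)$, which has size $<\mu$ because each $\dom(q_\xi)$ does and $\mu$ is regular. For a fixed $\delta$ in this union, the coordinates $(q_\xi(\delta))_\xi$ form a $\leq^*$-descending sequence of $\Add^*(\tau,\delta+1)$-names of length $<\mu$; since $1\Vdash$ ``$\dot{\Coll}(\check\mu,\check\delta)$ is ${<}\,\check\mu$-closed'' and $\gamma<\mu$, the empty condition forces this sequence to have a lower bound, so by the maximum principle there is a name $q(\delta)$ with $1\Vdash q(\delta)\leq q_\xi(\delta)$ for every relevant $\xi$, i.e. $q(\delta)\leq^* q_\xi(\delta)$. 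Thus $(\emptyset,q)$ is the desired lower bound.

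For clause (2), I would set $\pi(p,(\emptyset,q)):=(p,q)$ and check the projection axioms. Preservation of the top element and of the order is immediate, since the term order $\leq^*$ (where $1$ forces the inequality) is stronger than the $\dM^+$-order (where only the relevant restriction of the Cohen part need force it). The content is the third axiom: given $(p'',q'')\leq_{\dM^+}(p,q)=\pi(p,(\emptyset,q))$, I must produce $(p^*,(\emptyset,q^*))\leq(p,(\emptyset,q))$ projecting below $(p'',q'')$. I would take $p^*:=p''$ and, for each $\delta\in\dom(q'')$, define $q^*(\delta)$ by mixing over the maximal antichain consisting of $p''\uhr((\delta+1)\times\tau)$ together with a maximal antichain of conditions incompatible with it: let $q^*(\delta)$ be $q''(\delta)$ below $p''\uhr((\delta+1)\times\tau)$ and $q(\delta)$ (or the trivial condition when $\delta\notin\dom(q)$) elsewhere. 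Then $1$ forces $q^*(\delta)\leq q(\delta)$ -- on the first piece because $p''\uhr((\delta+1)\times\tau)\Vdash q''(\delta)\leq q(\delta)$ by hypothesis, and trivially on the rest -- so $(\emptyset,q^*)\leq_{\dT}(\emptyset,q)$; and $p''\uhr((\delta+1)\times\tau)\Vdash q^*(\delta)=q''(\delta)$ gives $(p^*,q^*)\leq_{\dM^+}(p'',q'')$. The point to get right is that $q''(\delta)$ is only known to lie below $q(\delta)$ underneath $p''\uhr((\delta+1)\times\tau)$, which is exactly why the mixing is needed to land back in the term forcing.

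For clause (3), the ${<}\,\tau$-closure is proven as in clause (1), except that the coordinatewise lower bounds are taken below $p\uhr((\delta+1)\times\tau)$ for the limiting Cohen condition $p=\bigcup_\xi p_\xi$, again invoking the forced ${<}\,\mu$-closure of the collapses and $\tau<\mu$. For $\lambda$-cc I would argue by contradiction from a putative antichain $\{(p_i,q_i)\mid i<\lambda\}$, first passing to the dense set of conditions whose term-values are nice names, and then thinning in three steps using that $\lambda$ is inaccessible: a $\Delta$-system refinement making the $p_i$ agree on a common root (hence pairwise compatible); a $\Delta$-system refinement giving the domains $\dom(q_i)$ a common root $d$ with $|d|<\mu$; and, since for each $\delta\in d$ both $\Add^*(\tau,\delta+1)$ and the collapse have size $<\lambda$, there are fewer than $\lambda$ nice names for $q_i(\delta)$, so $(<\lambda)^{|d|}<\lambda$ pigeonholes a $\lambda$-sized subfamily on which all $q_i\uhr d$ coincide. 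Any two conditions in the final subfamily are then compatible (glue the Cohen parts, union the term parts, using agreement on the root), a contradiction. Cardinals $\leq\tau$ and $\geq\lambda$ are preserved by ${<}\,\tau$-closure and $\lambda$-cc. For the interval $(\tau,\mu]$ I would route through the product of clause (2): since any cardinal collapsed by $\dM^+$ is collapsed by the product (by the projection Fact), it suffices that $\dT\times\Add^*(\tau,\lambda)$ preserves cardinals $\leq\mu$. Forcing first with $\dT$ preserves all cardinals $\leq\mu$ (clause (1)) and preserves $\tau^{<\tau}=\tau$ (as ${<}\,\mu$-closed), so over $V[\dT]$ the poset $\Add^*(\tau,\lambda)$ is still ${<}\,\tau$-closed and $\tau^+$-cc by the usual $\Delta$-system count, hence preserves every cardinal $\leq\mu$. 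I expect the name-counting in the $\lambda$-cc argument, and arranging the factorization so that $\mu$ itself survives, to be where the care is needed.

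Finally, for clause (4): $\mu^+=\lambda$ holds because for every successor-of-inaccessible $\delta<\lambda$ the set of conditions with $\delta\in\dom(q)$ is dense, so $\dM^+$ densely collapses each such $\delta$ to $\mu$; as these are cofinal in $\lambda$ and both $\mu$ and $\lambda$ are preserved by clause (3), $\lambda$ becomes $\mu^+$. The equalities $2^\tau=2^\mu=\lambda$ follow by squeezing: $\Add^*(\tau,\lambda)$ adds $\lambda$ many subsets of $\tau$, and $\mu^+=\lambda\leq 2^\mu$, give the lower bounds, while a dense subset of $\dM^+$ has size $\lambda$, so by $\lambda$-cc and $\lambda^{<\lambda}=\lambda$ (inaccessibility) the number of nice names for a subset of $\tau$, respectively of $\mu$, is at most $\lambda$, yielding the matching upper bounds.
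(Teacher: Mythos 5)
Your proposal is essentially correct, but note that the paper itself gives no proof of this lemma: it is stated as an ``omnibus lemma'' with the proofs cited from Levine's paper (Section 1.3 of \cite{LevineDisjointStatSeq}), so there is no in-paper argument to compare against. What you write is the standard package of arguments for Mitchell-style posets, and it matches what the cited source does: ${<}\,\mu$-closure of the term part via the maximum principle (the key observation being that the $\dT$-order on each coordinate is exactly the term order, since the Cohen part is empty); the projection via mixing $q''(\delta)$ with $q(\delta)$ over a maximal antichain containing $p''\uhr((\delta+1)\times\tau)$, which is precisely the point where a naive ``identity'' map fails to be a projection; $\lambda$-cc via a double $\Delta$-system and a nice-name count using inaccessibility of $\lambda$; preservation of cardinals in $(\tau,\mu]$ by passing through the product $\Add^*(\tau,\lambda)\times\dT$ and the downward absoluteness of non-cardinality; and clause (4) by density of the collapses plus nice-name counting over a $\lambda$-sized dense set.

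One caveat worth flagging, though it concerns the statement rather than your argument: your proof of clause (4) uses that the successors of inaccessibles are cofinal in $\lambda$ (and that there are $\lambda$ many inaccessibles below $\lambda$, for $2^\tau\geq\lambda$). This does not follow from inaccessibility of $\lambda$ alone --- if $\lambda$ is the least inaccessible, $\dM^+(\tau,\mu,\lambda)$ is trivial and cannot force $\mu^+=\lambda$ --- but it does hold in every application in the paper, where $\lambda$ is Mahlo. Any correct proof of (4) needs this extra hypothesis, so your step ``as these are cofinal in $\lambda$'' is the right move; it just deserves to be called out as an implicit strengthening of the stated assumption.
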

	
	A concept we want to highlight specifically is the following: If $\nu<\lambda$ is inaccessible and $G$ is $\dM^+(\tau,\mu,\nu)$-generic, the quotient forcing $\dM^+(\tau,\mu,\lambda)/G$ (using the obvious projection) once again resembles Mitchell forcing and is in particular the projection of the product of a $\tau^+$-cc and a ${<}\,\mu$-closed forcing. Because of our specific order of Cohen reals and collapses (using $\Add(\tau)$ at inaccessibles and $\Coll(\mu,\delta)$ at successors of inaccessibles), we obtain the following:
	
	\begin{mylem}[{\cite[Lemma 22]{LevineDisjointStatSeq}}]\label{Decomp}
		Let $\lambda$ be inaccessible and let $\tau<\mu<\lambda$ be regular cardinals such that $\tau^{<\tau}=\tau$. If $\nu<\lambda$ is inaccessible, there is a forcing equivalence
		$$\dM^+(\tau,\mu,\lambda)\cong\dM^+(\tau,\mu,\nu)*\Add(\tau)*\Omega$$
		where $\dM^+(\tau,\mu,\nu)*\Add(\tau)$ forces that $\Omega$ is the projection of the product of a $\tau^+$-cc and a ${<}\,\mu$-closed poset.
	\end{mylem}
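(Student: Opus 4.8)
The plan is to split each condition of $\dM^+(\tau,\mu,\lambda)$ according to the three regions of Cohen coordinates ``below $\nu$'', ``at $\nu$'' and ``above $\nu$'', to read off the first two regions as $\dM^+(\tau,\mu,\nu)*\Add(\tau)$, and then to recognize the surviving tail as a Mitchell-style poset over the intermediate model, to which the termspace projection of Lemma \ref{OmnibusLemma} applies.

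First I would set up the projection onto the head. For $(p,q)\in\dM^+(\tau,\mu,\lambda)$ write $(\bar p,\bar q)$ for the restriction of $p$ to inaccessible coordinates $<\nu$ and of $q$ to $\dom(q)\cap\nu$; this lies in $\dM^+(\tau,\mu,\nu)$, and $(p,q)\mapsto(\bar p,\bar q)$ is a projection. The only nonroutine clause is the third, and it holds because any strengthening $(\bar r,\bar s)\leq(\bar p,\bar q)$ can be amalgamated with the untouched part of $(p,q)$ on $[\nu,\lambda)$: the collapse names $q(\delta)$ for $\delta\geq\nu$ are $\Add^*(\tau,\delta+1)$-names and are reused verbatim. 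Since $\nu$ is not an inaccessible coordinate occurring in $\dM^+(\tau,\mu,\nu)$ and no collapse name there mentions coordinate $\nu$, the Cohen coordinate $a:=p\uhr(\{\nu\}\times\tau)$ is independent of $\dM^+(\tau,\mu,\nu)$ and ranges over a commuting copy of $\Add(\tau)$; using $\Add^*(\tau,\lambda)\cong\Add^*(\tau,\nu)\times\Add(\tau)\times\Add^*(\tau,(\nu,\lambda))$ this yields the first two factors $\dM^+(\tau,\mu,\nu)*\Add(\tau)$ (which is in fact a product).

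Next I would identify the quotient with $\Omega$. Let $G_0$ be generic for $\dM^+(\tau,\mu,\nu)*\Add(\tau)$ and define $\Omega$ in $V[G_0]$ to consist of the tails $(p^+,q^+)$, where $p^+\in\Add^*(\tau,(\nu,\lambda))$ and $q^+$ is a $<\mu$-sized function with domain contained in $\{\nu'+1:\nu\leq\nu'<\lambda\text{ inaccessible}\}$ such that $q^+(\delta)$ is an $\Add^*(\tau,(\nu,\delta+1))$-name for a condition in the relevant collapse, obtained by reading the below-and-at-$\nu$ part of the original $\Add^*(\tau,\delta+1)$-name $q(\delta)$ off from $G_0$. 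The key verification is that the $\dM^+(\tau,\mu,\lambda)$-order restricted to the tails, once the head data is generic, is exactly the $\Omega$-order: the Cohen clause splits coordinatewise, and the collapse clause $p'\uhr((\delta+1)\times\tau)\Vdash q'(\delta)\leq q(\delta)$ becomes, over $V[G_0]$, the clause that $p'^+\uhr((\nu,\delta+1)\times\tau)$ forces $q'^+(\delta)\leq q^+(\delta)$. Hence $(p,q)\mapsto((\bar p,\bar q),a,(p^+,q^+))$ is a dense embedding witnessing $\dM^+(\tau,\mu,\lambda)\cong\dM^+(\tau,\mu,\nu)*\Add(\tau)*\Omega$. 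I expect this name bookkeeping to be the delicate point: one must check that restricting an $\Add^*(\tau,\delta+1)$-name to its ``above $\nu$'' coordinates over $V[G_0]$ yields a genuine $\Omega$-condition and that the two orderings agree on the nose, in particular for the collapse at $\nu$, whose name really does use the freshly added Cohen real $a$ at coordinate $\nu$.

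Finally I would read off the structure of $\Omega$. By construction $\Omega$ is again a Mitchell-style poset over $V[G_0]$: a Cohen part $\Add^*(\tau,(\nu,\lambda))$ together with collapse names indexed by successors of inaccessibles in $[\nu,\lambda)$. (The first coordinate $\nu+1$ carries $\Coll(\mu,\nu)$ computed in the Cohen extension, where $\nu$ is still inaccessible; over $V[G_0]$, where $\nu=\mu^+$, this collapses the old $\mu^+$ to $\mu$, in accordance with $\dM^+(\tau,\mu,\lambda)$ forcing $\mu^+=\lambda$.) Exactly as in Lemma \ref{OmnibusLemma}, the identity is a projection from $\Add^*(\tau,(\nu,\lambda))\times\dT'$ onto $\Omega$, where $\dT'$ is the termspace order of the collapse part: $\Add^*(\tau,(\nu,\lambda))$ is $\tau^+$-cc by a $\Delta$-system argument using $\tau^{<\tau}=\tau$, and $\dT'$ is $<\mu$-closed because each $\Coll(\mu,\cdot)$ is $<\mu$-closed and descending sequences of fewer than $\mu$ terms can be met below. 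This presents $\Omega$ as the projection of the product of a $\tau^+$-cc and a $<\mu$-closed forcing, as required.
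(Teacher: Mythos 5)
The paper itself contains no proof of this lemma: it is imported verbatim from Levine's paper (his Lemma 22), so your proposal can only be compared with the standard argument. Your overall route is the expected one, and most of it is sound: the restriction map $(p,q)\mapsto(\bar p,\bar q)$ is indeed a projection (your amalgamation argument, reusing the collapse names on $[\nu,\lambda)$ verbatim, is exactly right), the Cohen coordinate at $\nu$ does split off as a commuting copy of $\Add(\tau)$ (and the two-step iteration is a product, by ${<}\,\tau$-distributivity of $\dM^+(\tau,\mu,\nu)$), and the final analysis of $\Omega$ as the projection of (tail Cohen) $\times$ (termspace of the tail collapses), with the $\tau^+$-cc and ${<}\,\mu$-closure verifications, is the same argument as Lemma \ref{OmnibusLemma} (2).

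There is, however, a genuine gap where you write ``Hence $(p,q)\mapsto((\bar p,\bar q),a,(p^+,q^+))$ is a dense embedding.'' What you check beforehand --- that partial evaluation of ground-model tails yields genuine $\Omega$-conditions and that the orderings correspond --- gives order-preservation (and, with a routine amalgamation, incompatibility-preservation), but it does not give density of the image, and density is where the content of the forcing equivalence lies. The problem: a condition of $\dM^+(\tau,\mu,\nu)*\Add(\tau)*\Omega$ has as third coordinate a \emph{head-name} $\dot q^+$ for an $\Omega$-condition, and \emph{which} ground-model tail it is a partial evaluation of can depend on the whole head generic, in particular on its collapse part; since the head forcing is only ${<}\,\tau$-closed, you cannot simply strengthen a head condition to decide this ${<}\,\mu$-sized object. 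The missing step is a name-reduction argument: using the projection analysis of the head together with Easton's lemma, every ${<}\,\mu$-sized set of ordinals in the head extension already lies in its Cohen subextension $V[g]$, where $g$ is the induced $\Add^*(\tau,\nu+1)$-generic; hence below densely many head conditions, $\dot q^+(\delta)$ equals $\sigma^g$ for a pure $\Add^*(\tau,\nu+1)$-name $\sigma$, and $\sigma$ can then be flattened into a genuine $\Add^*(\tau,\delta+1)$-name $q(\delta)$ whose partial evaluation is forced to be $\dot q^+(\delta)$. Only after this does every condition of the three-step iteration have a member of the image below it. Alternatively, you can sidestep density altogether: since your head map is a projection, $\dM^+(\tau,\mu,\lambda)$ is automatically forcing equivalent to $(\dM^+(\tau,\mu,\nu)\times\Add(\tau))*\dot\Omega$ with $\dot\Omega$ the canonical quotient, and all the work then goes into showing that this quotient carries the stated $\tau^+$-cc $\times$ ${<}\,\mu$-closed projection structure --- which is the cleaner way to organize exactly the computations you have already done.
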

	
	Building on arguments of Krueger, Levine was able to show that his variant of Mitchell forcing also forces the existence of a disjoint stationary sequence:
	
	\begin{mylem}\label{MForcesDSS}
		Let $\lambda$ be inaccessible and let $\mu<\lambda$ be a regular cardinal. If $\lambda$ is Mahlo then $\dM^+(\omega,\mu,\lambda)$ forces $\DSS(\mu^+)$.
	\end{mylem}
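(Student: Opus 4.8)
The plan is to build the disjoint stationary sequence from the inaccessible cardinals below $\lambda$, using Fact \ref{KruegerFact} to produce, at each such cardinal, a stationary set of ``new'' small subsets, and then to argue via the decomposition of Lemma \ref{Decomp} that these sets remain stationary and become pairwise disjoint in the final extension. Let $G$ be $\dM^+(\omega,\mu,\lambda)$-generic and work in $V[G]$. By Lemma \ref{OmnibusLemma} we have $\mu^+=\lambda$, and since $\dM^+(\omega,\mu,\lambda)$ is $\lambda$-cc, the set $S$ of all $\nu<\lambda$ with $\mu<\nu$ that are inaccessible in $V$ remains stationary in $\lambda=\mu^+$. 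Moreover each such $\nu$ is collapsed to have cardinality $\mu$ by the collapse acting at $\nu+1$, whence $\cf^{V[G]}(\nu)=\mu$, so $S\subseteq\mu^+\cap\cof(\mu)$. This will serve as the index set.

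Fix $\nu\in S$. First I would invoke Lemma \ref{Decomp} to write $\dM^+(\omega,\mu,\lambda)\cong\dM^+(\omega,\mu,\nu)*\Add(\omega)*\Omega_\nu$, and set $V_\nu:=V[G_\nu]$, where $G_\nu$ is the restriction of $G$ to the initial segment $\dM^+(\omega,\mu,\nu)$, and $V_\nu':=V_\nu[c_\nu]$, where $c_\nu$ is the Cohen real added by the $\Add(\omega)$-factor. In $V_\nu$ we have $\mu^+=\nu$ (Lemma \ref{OmnibusLemma}), and this is preserved into $V_\nu'$ since $\Add(\omega)$ is ccc. Applying Fact \ref{KruegerFact} in $V_\nu'$ with ground model $V_\nu$, the new real $c_\nu$, and $X=\nu$, the collection
\[
T_\nu:=\{N\in[H(\Theta)]^{<\mu}\cap\IA(\omega)\;|\;N\cap\nu\notin V_\nu\}
\]
is stationary in $V_\nu'$ for a suitable regular $\Theta$. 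Projecting via $N\mapsto N\cap\nu$ yields a stationary set $\mathcal{S}_\nu:=\{N\cap\nu\;|\;N\in T_\nu\}\subseteq[\nu]^{<\mu}$, each of whose elements lies outside $V_\nu$. Since $V_\nu$ is definable in the forcing extension $V_\nu'$, the set $\mathcal{S}_\nu$ (and uniformly the whole sequence) lies in $V[G]$.

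The key preservation step is to show that $\mathcal{S}_\nu$ stays stationary in $V[G]$. Here I would use that, over $V_\nu'$, the tail $\Omega_\nu$ is a projection of a product $\dR\times\dC$ of a ccc poset $\dR$ and a ${<}\,\mu$-closed poset $\dC$ (Lemma \ref{Decomp}). Forcing first with $\dC$, Fact \ref{IAPreserve} keeps $T_\nu$, a set of $\IA(\omega)$-models with $\omega<\mu$, stationary in $[H^{V_\nu'}(\Theta)]^{<\mu}$; projecting to $[\nu]^{<\mu}$ and then forcing with $\dR$, which is $\mu$-cc, Lemma \ref{StatPres} preserves stationarity of $\mathcal{S}_\nu$. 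Thus $\mathcal{S}_\nu$ is stationary in the $\dR\times\dC$-extension $V[G][H]\supseteq V[G]$. As $\mathcal{S}_\nu\in V[G]$, a standard downward-absoluteness argument finishes this step: any club of $[\nu]^{<\mu}$ in $V[G]$ is the set of closure points of a Menas function $f$ (Lemma \ref[Menas, in the excerpt]{} as stated before Lemma \ref{StatPres}), the same $f$ determines a club in $V[G][H]$ meeting $\mathcal{S}_\nu$, and closure under the ground-model $f$ is absolute.

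Finally I would verify disjointness. For $\nu<\nu'$ in $S$, every element of $\mathcal{S}_\nu$ lies in $V_\nu'\subseteq V_{\nu'}$, since the Cohen real $c_\nu$ is added before stage $\nu'$, whereas every element of $\mathcal{S}_{\nu'}$ has the form $N'\cap\nu'\notin V_{\nu'}$; hence $\mathcal{S}_\nu\cap\mathcal{S}_{\nu'}=\emptyset$. Therefore $(\mathcal{S}_\nu)_{\nu\in S}$ is a disjoint stationary sequence on $\mu^+$ in $V[G]$. I expect the main obstacle to be the preservation step: one must route the argument through the product $\dR\times\dC$ rather than directly through the non-closed quotient $\Omega_\nu$, applying Fact \ref{IAPreserve} to the closed factor (while $T_\nu$ still consists of $\IA(\omega)$-models) before projecting, and then transferring stationarity back down to $V[G]$. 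The disjointness, by contrast, falls out essentially for free from the ``newness'' clause $N\cap\nu\notin V_\nu$ supplied by Fact \ref{KruegerFact}.
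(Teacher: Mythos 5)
Your proposal is correct and follows essentially the same route as the source: the paper does not prove Lemma \ref{MForcesDSS} itself but cites it from Levine's work, and both Levine's argument and the sketch of Krueger's idea given in Section 4 proceed exactly as you do -- indexing by the $V$-inaccessible cardinals below $\lambda$, producing new traces via Fact \ref{KruegerFact} applied over the intermediate models $V_\nu[c_\nu]$, and preserving their stationarity through the tail of the forcing. Your handling of the key subtlety (passing via Lemma \ref{Decomp} to the product of a ccc and a ${<}\,\mu$-closed poset, applying Fact \ref{IAPreserve} to the closed factor first, then Lemma \ref{StatPres} to the ccc factor, and pulling stationarity back down to $V[G]$ by a Menas-function absoluteness argument) is precisely the preservation technique the paper itself uses in the proof of Theorem \ref{BetterDist}.
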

	
	\section{Proving the Main Theorems}
	
	Now we will set up the forcing which gives us our desired model. Fix an increasing sequence $(\kappa_n)_{n\in\omega}$ of Mahlo cardinals. For simplicity, let $\kappa_{-1}:=\aleph_1$.
	
	\begin{mydef}
		Let $\dP((\kappa_n)_{n\in\omega})$ be the following poset: Conditions are functions $p$ on $\omega$ such that
		\begin{enumerate}
			\item For any $n\in\omega$, $p(n)=(p_0(n),p_1(n))\in\dM^+(\omega,\kappa_{n-1},\kappa_n)$
			\item For all but finitely many $n\in\omega$, $p_0(n)=\emptyset$.
		\end{enumerate}
		
		We let $p'\leq p$ if and only if $p'(n)\leq p(n)$ for all $n\in\omega$.
		
		For $k\in\omega$, we let $\dP_k((\kappa_n)_{n\in\omega})$ consist of those $p\in\dP((\kappa_n)_{n\in\omega})$ such that $p(n)$ is trivial for $k\geq n$. Let $\dP^k((\kappa_n)_{n\in\omega})$ consist of those $p\in\dP((\kappa_n)_{n\in\omega})$ such that $p(n)$ is trivial for $k<n$. Both posets inherit their ordering from $\dP((\kappa_n)_{n\in\omega})$.
	\end{mydef}
	
	Clearly, $\dP((\kappa_n)_{n\in\omega})$ is isomorphic to the product $\dP_k((\kappa_n)_{n\in\omega})$ and $\dP^k((\kappa_n)_{n\in\omega})$ whenever $k\in\omega$. Moreover, $\dP_k((\kappa_n)_{n\in\omega})$ is isomorphic to the simple ``normal product'' of $\dM^+(\omega,\kappa_{n-1},\kappa_n)$ over $n<k$. So clearly $\dP((\kappa_n)_{n\in\omega})$ is isomorphic to $\dP_k((\kappa_n)_{n\in\omega})\times\dM^+(\omega,\kappa_{n-1},\kappa_n)\times\dP^k((\kappa_n)_{n\in\omega})$. To simplify notation later on, we let $\dP((\kappa_n)_{n\in\omega})(n):=\dM^+(\omega,\kappa_{n-1},\kappa_n)$.
	
	The reason for introducing this specific product is as follows: Because of a lack of an analogue of Fact \ref{KruegerFact} we have to use $\dM^+(\omega,\kappa_{n-1},\kappa_n)$ to force $\DSS(\kappa_{n-1}^+)$ and cannot (as in \cite{CumForeTreeProp} for the tree property) use $\dM^+(\kappa_{n-2},\kappa_{n-1},\kappa_n)$. So none of our forcings are even countably closed. Due to this, a full support product cannot be expected to preserve cardinals. On the other hand, a finite support product would lack a sufficiently closed term ordering in order to guarantee the preservation of smaller cardinals.
	
	This poset is quite similar to the forcing used by Unger in \cite{UngerSuccessiveApproach} (before Lemma 3.1 there). In that work, the poset is used to obtain a failure of approachability at all successor cardinals in the interval $[\aleph_2,\aleph_{\omega^2+3}]$. In his case, the idea of adding many Cohen subsets to a small cardinal is used in order to obtain the failure of the approachability property at double successors of singular cardinals: The failure of $\AP_{\mu}$ implies $2^{<\mu}\geq\mu^+$ and so having $\neg\AP_{\aleph_{\omega+1}}$ implies $2^{\aleph_{\omega}}\geq\aleph_{\omega+2}$. This means that either $\aleph_{\omega}$ is not a strong limit (this is true in Unger's model and the reason why he adds many Cohen subsets to, in his case, $\aleph_1$) or the singular cardinal hypothesis has to fail at $\aleph_{\omega}$. However, in Unger's model $\AP_{\aleph_{\omega}}$ fails as well and it is to this day unknown whether both the approachability property and the singular cardinal hypothesis can fail simultaneously at $\aleph_{\omega}$.
	
	Let us also note that due to the easier preservation of $\DSS$ (or the distinction between internal stationarity and clubness) when compared to the tree property, we are fortunate to be able to use a product instead of an iteration which simplifies some arguments.
	
	We will now obtain a similar projection analysis as in Lemma \ref{OmnibusLemma}. As for $\dP$ before, we let $\dT_k((\kappa_n)_{n\in\omega}):=\prod_{n<k}\dT(\omega,\kappa_{n-1},\kappa_n)$, $\dT((\kappa_n)_{n\in\omega})(k):=\dT(\omega,\kappa_{k-1},\kappa_k)$ and $\dT^k((\kappa_n)_{n\in\omega}):=\prod_{n\geq k}\dT(\omega,\kappa_{n-1},\kappa_n)$.
	
	\begin{mylem}\label{PDecomp}
		Let $k\in\omega$.
		\begin{enumerate}
			\item $\dP_k((\kappa_n)_{n\in\omega})$ is $\kappa_{k-1}$-Knaster.
			\item $\dP^k((\kappa_n)_{n\in\omega})$ is the projection of $\prod_{n\geq k}\Add^*(\omega,\kappa_n)\times\dT^k((\kappa_n)_{n\in\omega})$, where the product is taken with finite support and $\dT^k((\kappa_n)_{n\in\omega})$ is ${<}\,\kappa_{k-1}$-closed. Moreover, the quotient $(\prod_{n\geq k}\Add^*(\omega,\kappa_n)\times\dT^k)/\dP^k((\kappa_n)_{n\in\omega})$ is forced to be ${<}\,\kappa_{k-1}$-distributive.
		\end{enumerate}
	\end{mylem}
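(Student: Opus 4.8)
The plan is to handle the two parts separately, treating part (1) as a warm-up Knaster computation and spending most of the effort on the projection and distributivity claims in part (2).

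For part (1), I would observe that $\dP_k((\kappa_n)_{n\in\omega})$ is (isomorphic to) the full-support --- equivalently finite-support, since it has finitely many coordinates --- product $\prod_{n<k}\dM^+(\omega,\kappa_{n-1},\kappa_n)$. Each factor $\dM^+(\omega,\kappa_{n-1},\kappa_n)$ is $\kappa_n$-cc by Lemma \ref{OmnibusLemma}(3), and since $\kappa_n$ is Mahlo (hence inaccessible) this chain condition upgrades to $\kappa_n$-Knaster. The largest relevant parameter among the factors with $n<k$ is $\kappa_{k-1}$; a finite product of $\kappa_{k-1}$-Knaster posets (the factors for $n<k-1$ are Knaster at their own $\kappa_n\le\kappa_{k-1}$, hence also at $\kappa_{k-1}$) is again $\kappa_{k-1}$-Knaster by the standard $\Delta$-system-free argument for Knaster products. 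So I would reduce (1) to the single-coordinate Knasterness of $\dM^+$ together with the elementary fact that the Knaster property is productive.

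For part (2), the plan is to exhibit the projection coordinate-wise and then check closure and distributivity of the quotient. For the projection, Lemma \ref{OmnibusLemma}(2) gives, for each fixed $n\ge k$, a projection $\pi_n\colon\Add^*(\omega,\kappa_n)\times\dT(\omega,\kappa_{n-1},\kappa_n)\to\dM^+(\omega,\kappa_{n-1},\kappa_n)$. I would define $\pi$ on the finite-support product $\prod_{n\ge k}\Add^*(\omega,\kappa_n)\times\dT^k((\kappa_n)_{n\in\omega})$ by applying $\pi_n$ in each coordinate; since $\dP^k$ carries the finite-support restriction on the Cohen parts (inherited from $\dP$), the support condition is respected coordinate-wise, and the three projection axioms hold because they hold in each factor and products of projections are projections. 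For the closure of $\dT^k((\kappa_n)_{n\in\omega})$, I would use that each $\dT(\omega,\kappa_{n-1},\kappa_n)$ is ${<}\,\kappa_{n-1}$-closed by Lemma \ref{OmnibusLemma}(1); since the product is taken with full support on the termspace coordinates and the smallest index is $n=k$ (giving closure degree $\kappa_{k-1}$), a decreasing sequence of length ${<}\,\kappa_{k-1}$ can be met coordinate-wise, yielding ${<}\,\kappa_{k-1}$-closure of $\dT^k$.

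The main obstacle, and where I would concentrate the work, is the final distributivity claim about the quotient $(\prod_{n\ge k}\Add^*(\omega,\kappa_n)\times\dT^k)/\dP^k((\kappa_n)_{n\in\omega})$. Here the difficulty is that the Cohen part is a finite-support \emph{infinite} product of $\Add^*(\omega,\kappa_n)$, so the relevant chain condition is not the easy $\kappa_{k-1}$-cc of a single Mitchell forcing but must be extracted from the tail product. The plan is to analyze the quotient by a standard Mitchell-style argument: factor the full product as (Cohen part) $\times$ (termspace part), observe that the quotient of the termspace part over the Mitchell forcing is absorbed, and reduce the ${<}\,\kappa_{k-1}$-distributivity of the quotient to a density argument using the ${<}\,\kappa_{k-1}$-closure of $\dT^k$ against the chain condition of the finite-support Cohen product. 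The key technical point is that the finite-support product $\prod_{n\ge k}\Add^*(\omega,\kappa_n)$ is $\kappa_{k-1}$-cc (indeed Knaster): each factor adds subsets indexed by inaccessibles below $\kappa_n$, and finite support keeps antichains small enough that a $\Delta$-system argument below $\kappa_{k-1}$ applies. Granting this, the quotient becomes the projection of a ${<}\,\kappa_{k-1}$-closed poset by a $\kappa_{k-1}$-cc poset, and a Easton-style counting-of-names argument shows that any $\dP^k$-name for a ${<}\,\kappa_{k-1}$-sequence of ordinals is decided by a single condition, giving the desired ${<}\,\kappa_{k-1}$-distributivity.
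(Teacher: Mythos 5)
Your overall architecture is the same as the paper's: part (1) via finite productivity of the Knaster property, and part (2) via a coordinate-wise product of the projections from Lemma \ref{OmnibusLemma}(2), full-support closure of $\dT^k((\kappa_n)_{n\in\omega})$, and an Easton-lemma argument (ccc finite-support Cohen product against the ${<}\,\kappa_{k-1}$-closed term product, together with the fact that the Cohen product sits inside $\dP^k((\kappa_n)_{n\in\omega})$, so its generic is captured by the $\dP^k$-generic) for the distributivity of the quotient. Part (2) of your plan is essentially the paper's proof; the one formulation you should repair is ``any $\dP^k$-name for a ${<}\,\kappa_{k-1}$-sequence of ordinals is decided by a single condition,'' which is false as literally stated ($\dP^k$ adds reals). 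What Easton's lemma actually gives is that every ${<}\,\kappa_{k-1}$-sequence of ordinals in the extension by $\prod_{n\geq k}\Add^*(\omega,\kappa_n)\times\dT^k$ already lies in the extension by $\prod_{n\geq k}\Add^*(\omega,\kappa_n)$, hence in the extension by $\dP^k$; that containment, not a decision by single conditions, is what ${<}\,\kappa_{k-1}$-distributivity of the quotient means here.

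The genuine gap is in part (1), where both ``upgrade'' principles you invoke are not theorems. First, ``$\kappa_n$-cc upgrades to $\kappa_n$-Knaster since $\kappa_n$ is Mahlo'' is false: that implication needs weak compactness, not Mahloness. For instance, in $L$ every inaccessible non-weakly-compact cardinal (in particular the least Mahlo) carries a $\kappa$-Suslin tree, which is $\kappa$-cc but not $\kappa$-Knaster, since any $\kappa$-sized linked family of nodes would be a chain of length $\kappa$. The Knasterness of $\dM^+(\omega,\kappa_{n-1},\kappa_n)$ at $\kappa_n$ is a specific property of this poset, proved by a $\Delta$-system and name-counting argument using the inaccessibility of $\kappa_n$ (this is part of Levine's analysis behind Lemma \ref{OmnibusLemma}, whose statement here records only the chain condition); it cannot be inferred abstractly from $\kappa_n$-cc. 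Second, ``Knaster at $\kappa_n\leq\kappa_{k-1}$, hence also at $\kappa_{k-1}$'' is not a valid general transfer: the Knaster property does not move upward between cardinals as a matter of course. The correct (and much easier) reason the factors with $n<k-1$ are $\kappa_{k-1}$-Knaster is a size argument: $\dM^+(\omega,\kappa_{n-1},\kappa_n)$ has cardinality $\kappa_n<\kappa_{k-1}$, so among any $\kappa_{k-1}$ many conditions some single condition repeats $\kappa_{k-1}$ many times by regularity of $\kappa_{k-1}$, and these repetitions are trivially pairwise compatible. With these two repairs (cite the Knaster property of a single Mitchell forcing at its own inaccessible, and handle the lower factors by cardinality), your part (1) coincides with the paper's one-line argument that $\dP_k((\kappa_n)_{n\in\omega})$ is a finite product of $\kappa_{k-1}$-Knaster posets.
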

	
	\begin{proof}
		(1) is easy, since $\dP_k((\kappa_n)_{n\in\omega})$ is simply a finite product of $\kappa_{k-1}$-Knaster forcings.
		
		For (2), given $n\geq k$, let $\pi_n\colon\Add^*(\omega,\kappa_n)\times\dT(\omega,\kappa_{n-1},\kappa_n)\to\dM^+(\omega,\kappa_{n-1},\kappa_n)$ be the projection obtained through Lemma \ref{OmnibusLemma} (2). Let $\dQ^k$ consist of all $(p,q)$ such that both $p$ and $q$ are functions on $\omega\smallsetminus k$, for each $n\in\omega\smallsetminus k$, $(p(n),q(n))\in\Add^*(\omega,\kappa_n)\times\dT(\omega,\kappa_{n-1},\kappa_n)$ and $p(n)$ is trivial for cofinitely many $n$, ordered pointwise. Then clearly $\pi\colon\dQ^k\to\dP^k((\kappa_n)_{n\in\omega})$, defined by $\pi((p,q))(n)=\pi_n(p(n),q(n))$, is a projection from $\dQ^k$ onto $\dP^k((\kappa_n)_{n\in\omega})$. Moreover, $\dQ^k$ is easily seen to be isomorphic to $\prod_{n\geq k}\Add^*(\omega,\kappa_n)\times\prod_{n\geq k}\dT(\omega,\kappa_{n-1},\kappa_n)$, where the first product is taken with finite support and the second product is taken with full support. Additionally, $\prod_{n\geq k}\dT(\omega,\kappa_{n-1},\kappa_n)=\dT^k((\kappa_n)_{n\in\omega})$.
		
		Since $\dT^k$ is a full support product of ${<}\,\kappa_{k-1}$-closed forcing notions, it is ${<}\,\kappa_{k-1}$-closed. The distributivity of the quotient follows easily from Easton's lemma, which implies that any ${<}\,\kappa_{k-1}$-sequence of ordinals added by $\prod_{n\geq k}\Add^*(\omega,\kappa_n)\times\dT^k$ has been added by $\prod_{n\geq k}\Add^*(\omega,\kappa_n)$ which is contained in $\dP^k((\kappa_n)_{n\in\omega})$.
	\end{proof}
	
	\subsection{Disjoint Stationary Sequences}\hfill
	
	We can now prove Theorem \ref{Thm1} easily:
	
	\begin{mysen}
		After forcing with $\dP((\kappa_n)_{n\in\omega})$, for every $n\in\omega$, $\kappa_n=\aleph_{n+2}$ and $\DSS(\aleph_{n+2})$ holds.
	\end{mysen}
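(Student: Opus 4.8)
The plan is to factor $\dP((\kappa_n)_{n\in\omega})$ around the single coordinate $n$ and show that the two ``tails'' cannot destroy the disjoint stationary sequence that coordinate $n$ produces. First I would use the isomorphism
$$\dP((\kappa_n)_{n\in\omega})\cong\dP_n((\kappa_m)_{m\in\omega})\times\dM^+(\omega,\kappa_{n-1},\kappa_n)\times\dP^{n+1}((\kappa_m)_{m\in\omega})$$
noted right after the definition of the poset. The middle factor is exactly Levine's forcing, and by Lemma \ref{MForcesDSS} (since $\kappa_n$ is Mahlo and $\kappa_{n-1}<\kappa_n$ is regular) forcing with $\dM^+(\omega,\kappa_{n-1},\kappa_n)$ yields $\DSS(\kappa_{n-1}^+)$; by Lemma \ref{OmnibusLemma}(4) this same forcing arranges $\kappa_{n-1}^+=\kappa_n$ and collapses everything strictly between $\kappa_{n-1}$ and $\kappa_n$. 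Running this over all $n$ and checking the cardinals fit together will give $\kappa_n=\aleph_{n+2}$ and produce, for each $n$, a disjoint stationary sequence $(\mathcal{S}_\alpha)_{\alpha\in S}$ on $\kappa_n$ living in the intermediate extension by the first two factors.

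\medskip

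Next I would argue that the remaining two factors preserve this sequence. The ``head'' $\dP_n((\kappa_m)_{m\in\omega})$ is a product of forcings below $\kappa_{n-1}$ and by Lemma \ref{PDecomp}(1) it is $\kappa_{n-1}$-Knaster, hence $\kappa_n$-cc; since a disjoint stationary sequence on $\kappa_n$ consists of stationary subsets of $[\alpha]^{<\kappa_{n-1}}$ for $\alpha\in S\cap\cof(\kappa_{n-1})$, Lemma \ref{StatPres} (applied with $\mu=\kappa_{n-1}$) shows each $\mathcal{S}_\alpha$ remains stationary and $S$ remains stationary in $\kappa_n$. The genuinely delicate factor is the ``tail'' $\dP^{n+1}((\kappa_m)_{m\in\omega})$, whose components are \emph{not} countably closed, so I cannot simply invoke closure. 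Instead I would use Lemma \ref{PDecomp}(2): this tail is the projection of $\prod_{m\geq n+1}\Add^*(\omega,\kappa_m)\times\dT^{n+1}$, where $\dT^{n+1}$ is ${<}\,\kappa_n$-closed and the quotient is ${<}\,\kappa_n$-distributive.

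\medskip

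This is where strong distributivity does the work. I would first observe that $\dT^{n+1}$, being ${<}\,\kappa_n$-closed, is strongly ${<}\,\kappa_n$-distributive. The finite-support Cohen product $\prod_{m\geq n+1}\Add^*(\omega,\kappa_m)$ is $\kappa_n$-cc (indeed it is a finite-support product of $\kappa_n$-Knaster forcings, using $\kappa_m>\kappa_n$ for $m>n$), so by the strong Easton Lemma \ref{StrongEaston}(2), $\dT^{n+1}$ remains strongly ${<}\,\kappa_n$-distributive after forcing with the Cohen product. Combining the $\kappa_n$-cc part (which preserves stationarity by Lemma \ref{StatPres}) with the strongly ${<}\,\kappa_n$-distributive part (which preserves stationarity of subsets of $[X]^{<\kappa_n}$ by Corollary \ref{PresPropGen}), the whole product $\prod\Add^*(\omega,\kappa_m)\times\dT^{n+1}$ preserves the stationarity of each $\mathcal{S}_\alpha$ and of $S$; since the tail $\dP^{n+1}$ is a projection of this product, a projection preserves stationarity whenever the larger forcing does, so the disjoint stationary sequence survives into the final model.

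\medskip

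The main obstacle is precisely the non-closure of the tail: the components of $\dP^{n+1}$ add reals, so no amount of ordinary closure is available, and the naive full-support analysis fails. The entire point of Section 3 is to route around this — the chain ``$\kappa_n$-cc Cohen part preserves strong distributivity of the closed part (Lemma \ref{StrongEaston}), and strong distributivity plus $\kappa_n$-cc together preserve generalized stationarity (Lemma \ref{PreservationProp}, Corollary \ref{PresPropGen})'' is what lets me conclude preservation. I would need to be careful that the relevant cofinality $\cof(\kappa_{n-1})$ used in the definition of the disjoint stationary sequence is compatible with applying the $\kappa_n$-level preservation lemmas, and that passing from the product to its projection $\dP^{n+1}$ genuinely inherits the preservation (which it does, since a $\dP^{n+1}$-generic extension embeds into an extension by the full product via Fact following the projection definition).
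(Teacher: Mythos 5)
Your overall architecture is the same as the paper's: factor $\dP((\kappa_m)_{m\in\omega})\cong\dP_n\times\dM^+(\omega,\kappa_{n-1},\kappa_n)\times\dP^{n+1}$, obtain the disjoint stationary sequence from the middle factor via Lemma \ref{MForcesDSS}, absorb the head by Knasterness, and handle the tail through its projection from $\prod_{m\geq n+1}\Add^*(\omega,\kappa_m)\times\dT^{n+1}$. But there is a genuine gap at the decisive step, namely \emph{over which model} you apply the strong Easton lemma. Write $W$ for the extension of $V$ by $\dP_{n+1}=\dP_n\times\dM^+(\omega,\kappa_{n-1},\kappa_n)$. The sequence $(\mathcal{S}_\alpha)_{\alpha\in S}$ is not in $V$: it is created by the middle factor, so it lives in $W$, and the tail $\dP^{n+1}$ must be shown to preserve its stationarity \emph{as a forcing over $W$}. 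Your application of Lemma \ref{StrongEaston}(2) takes the chain-condition side to be the Cohen product and verifies both hypotheses in $V$ (${<}\,\kappa_n$-closure of $\dT^{n+1}$ and the chain condition of $\prod_{m\geq n+1}\Add^*(\omega,\kappa_m)$ are facts about $V$); the conclusion is then only that $\dT^{n+1}$ is strongly ${<}\,\kappa_n$-distributive after forcing with the Cohen product \emph{over $V$}, so your product analysis only shows that stationary sets lying in $V$ survive -- it says nothing about $S$ and the $\mathcal{S}_\alpha$. If instead your third paragraph is meant to be read inside $W$, then its opening premise, that $\dT^{n+1}$ is ${<}\,\kappa_n$-closed, is unjustified there: closure is precisely the property that is \emph{not} preserved by the $\kappa_n$-cc forcing $\dP_{n+1}$, and routing around that failure is the entire reason strong distributivity was introduced.

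The repair is exactly the paper's move. In $V$, $\dP_{n+1}$ is $\kappa_n$-Knaster by Lemma \ref{PDecomp}(1), hence $\kappa_n$-cc, so Lemma \ref{StrongEaston}(2) applied with $\dQ=\dP_{n+1}$ and $\dP=\dT^{n+1}$ yields that $\dT^{n+1}$ is strongly ${<}\,\kappa_n$-distributive \emph{in $W$}. In $W$ the Cohen product is still ccc, and then Lemma \ref{PreservationProp} and Corollary \ref{PresPropGen} (noting that strong ${<}\,\kappa_n$-distributivity implies strong ${<}\,\kappa_{n-1}$-distributivity, as needed for $\mathcal{S}_\alpha\subseteq[\alpha]^{<\kappa_{n-1}}$, and using one further application of Lemma \ref{StrongEaston} inside $W$ to handle whichever of the two factors is forced second) show that the full product, and hence its projection $\dP^{n+1}$, preserves the stationarity of $S$ and of every $\mathcal{S}_\alpha$. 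A one-shot fix closest to your written text would be to apply Lemma \ref{StrongEaston}(2) in $V$ with $\dQ:=\dP_{n+1}\times\prod_{m\geq n+1}\Add^*(\omega,\kappa_m)$, which is still $\kappa_n$-cc. Note also that the cardinal computation you defer to ``checking the cardinals fit together'' needs the same correction, since the fact that the tail does not collapse $\kappa_n$ over $W$ rests on the same transferred distributivity; and, cosmetically, the Cohen product is ccc simply because it is (isomorphic to) a Cohen forcing -- your parenthetical appeal to $\kappa_m>\kappa_n$ plays no role.
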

	
	\begin{proof}
		We first show that for any $n\in\omega$, $\dP:=\dP((\kappa_n)_{n\in\omega})$ forces $\kappa_n=\kappa_{n-1}^+$. Let $G$ be $\dP$-generic over $V$. For $n\in\omega$, let $G(n)$ be the $\dM^+(\omega,\kappa_{n-1},\kappa_n)$-generic filter induced by $G$ and let $G_n$ (resp. $G^n$) be the $\dP_n$- (resp. $\dP^n$-) generic filter induced by $G$. By the product lemma, $V[G]=V[G(n)][G_n][G^{n+1}]$, since $\dP$ is isomorphic to $\dP_n\times\dM^+(\omega,\kappa_{n-1},\kappa_n)\times\dP^{n+1}$. In $V[G(n)]$, $\kappa_n=\kappa_{n-1}^+$ by Lemma \ref{OmnibusLemma} (4). Furthermore, $\dP_n$ is $\kappa_{n-1}$-Knaster in $V[G(n)]$, since it is $\kappa_{n-1}$-Knaster in $V$ (see Lemma \ref{PDecomp} (1)) and $\dM^+(\omega,\kappa_{n-1},\kappa_n)$ can be projected onto from the product of a ccc and a ${<}\,\kappa_{n-1}$-closed poset (see Lemma \ref{OmnibusLemma} (1) and (2)). Ergo $\kappa_n$ is preserved when going to $V[G(n)][G_n]$. Lastly, in $V$, $\dP^{n+1}$ can be projected onto from $\prod_{k\geq n+1}\Add^*(\omega,\kappa_k)\times\dT^{n+1}$, where $\dT^{n+1}$ is ${<}\,\kappa_n$-closed (see Lemma \ref{PDecomp} (2)). The same projection clearly works in $V[G(n)][G_n]$ as well. Moreover, in $V[G(n)][G_n]$, $\prod_{k\geq n+1}\Add^*(\omega,\kappa_k)$ is still ccc and $\dT^{n+1}$ is strongly ${<}\,\kappa_n$-distributive by Lemma \ref{StrongEaston} (since $\dP_{n+1}$ is $\kappa_n$-cc). Ergo $\kappa_n$ is preserved when going to $V[G(n)][G_n][G^{n+1}]$. It follows easily by induction that in $V[G]$, $\kappa_n=\aleph_{n+2}$ for every $n\in\omega$ (recall that $\kappa_{-1}=\aleph_1$).
		
		Now we turn to $\DSS$ which is proved almost exactly as above. For $n\in\omega$, we know by Lemma \ref{MForcesDSS} that $\DSS(\kappa_{n-1}^+)$ holds in $V[G(n)]$, witnessed by some sequence $(\mathcal{S}_{\alpha})_{\alpha\in S}$, where $S\subseteq\kappa_{n-1}^+\cap\cof(\kappa_{n-1})$ is stationary and each $\mathcal{S}_{\alpha}\subseteq[\alpha]^{<\kappa_{n-1}}$ is stationary. In $V[G(n)][G_n]$, $S$ and each $\mathcal{S}_{\alpha}$ remains stationary by the $\kappa_{n-1}$-cc of $\dP_n$. Lastly, $V[G(n)][G_n][G^{n+1}]$ is an extension of $V[G(n)][G_n]$ using a poset which can be projected onto from the product of a ccc and a strongly ${<}\,\kappa_n=\kappa_{n-1}^+$-distributive poset. By Lemma \ref{PreservationProp} and Corollary \ref{PresPropGen}, this poset also cannot destroy the stationarity of $S$ or any $\mathcal{S}_{\alpha}$. So $(\mathcal{S}_{\alpha})_{\alpha\in S}$ remains a disjoint stationary sequence on $\kappa_{n-1}^+$ in $V[G]$.
	\end{proof}
	
	\subsection{Distinction Between Internal Stationarity and Clubness}\hfill
	
	We now turn to the proof of Theorem \ref{Thm2}. Note that even for a distinction between internal stationarity and clubness in $[H(\aleph_n)]^{<\aleph_n}$, we have some work to do: In our final model, $2^{\omega}\geq\aleph_{\omega+1}$ and under this cardinal arithmetic the result by Krueger relating $\DSS(\aleph_n)$ and the distinction of internal stationarity and clubness in $[H(\aleph_n)]^{<\aleph_n}$ might fail (at least one direction can consistently fail, see \cite{JakobCascadingVariants}).
	
	We start by showing that $\dM^+(\omega,\mu,\lambda)$ forces the distinction between internal stationarity and clubness in a particularly strong way which is more easily preserved. This refines \cite[Lemma 29]{LevineDisjointStatSeq} but is proved with similar arguments.
	
	\begin{mysen}\label{BetterDist}
		Let $\aleph_0<\mu<\lambda$ be regular cardinals such that $\lambda$ is Mahlo. After forcing with $\dM^+(\omega,\mu,\lambda)$, for any $\Theta\geq\lambda$ there are stationarily many $N\in[H(\Theta)]^{\mu}$ such that $N$ is internally stationary but $N$ does not contain a club in $[N\cap\lambda]^{<\mu}$
	\end{mysen}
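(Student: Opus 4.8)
The plan is to combine the characterisation of generalized clubs by closure functions with an explicit construction of a continuous chain of submodels. Work in $V[G]$, $G$ generic for $\dM:=\dM^+(\omega,\mu,\lambda)$; by Lemma \ref{OmnibusLemma} we have $\mu^+=\lambda$ and $2^\mu=\lambda$. Fix $\Theta\geq\lambda$ and, using Menas' characterisation, a function $f\colon[H(\Theta)]^{<\omega}\to[H(\Theta)]^{<\mu}$; it then suffices to produce one $N\in[H(\Theta)]^\mu$ closed under $f$ that is internally stationary yet contains no club of $[N\cap\lambda]^{<\mu}$.

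My first step is a purely combinatorial reduction. Suppose $N=\bigcup_{i<\mu}N_i$ is the union of a continuous, $\in$-increasing chain of elementary submodels $N_i\prec H(\Theta)$, each of size ${<}\mu$ and closed under $f$, with $\mu\subseteq N_0$. Then $\{N_i:i<\mu\}$ is a club in $[N]^{<\mu}$ and $\{N_i\cap\lambda:i<\mu\}$ is a club in $[N\cap\lambda]^{<\mu}$, both being continuous, increasing and cofinal (here $\cf(\mu)=\mu$ is used). Intersecting an arbitrary club of $[N]^{<\mu}$ with the first chain shows that if $S_0:=\{i<\mu:N_i\in N\}$ is stationary then $[N]^{<\mu}\cap N$ is stationary, so $N$ is internally stationary; intersecting an arbitrary club of $[N\cap\lambda]^{<\mu}$ with the second chain shows that if $S_1:=\{i<\mu:N_i\cap\lambda\notin N\}$ is stationary then no club of $[N\cap\lambda]^{<\mu}$ is contained in $N$. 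Since $N_i\in N$ entails $N_i\cap\lambda\in N$, the sets $S_0$ and $S_1$ are disjoint, and the whole statement reduces to building a chain as above for which both $S_0$ and $S_1$ are stationary in $\mu$.

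For the construction I would use the projection $\Add^*(\omega,\lambda)\times\dT(\omega,\mu,\lambda)\to\dM$ of Lemma \ref{OmnibusLemma}(2): the factor $\Add^*(\omega,\lambda)$ is ccc and contributes a Cohen real $c_\delta$ at every inaccessible $\delta<\lambda$, while $\dT:=\dT(\omega,\mu,\lambda)$ is ${<}\mu$-closed. The two stationary sets arise from opposing effects. For $S_1$ I invoke Fact \ref{KruegerFact}: over the inner model computed just before $c_\delta$ is added, the new real $c_\delta$ makes the collection of small $\IA(\omega)$-models $N'$ with $N'\cap\lambda$ outside that inner model stationary, and such an $N'\cap\lambda$ cannot be an element of a submodel built over that inner model; since $\lambda$ is Mahlo the inaccessibles are stationary in $\lambda$, so these ``fresh'' stages recur cofinally and can be hit stationarily often along the chain. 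For $S_0$ I use that $\dT$ adds no new element of $[X]^{<\mu}$ and preserves weak internal approachability (Fact \ref{IAPreserve}), which lets me arrange, through term names on the closed coordinate, that stationarily many initial segments $\langle N_j:j\leq i\rangle$---and hence the models $N_i$---are captured as elements of $N$. Concretely I would run a recursion of length $\mu$ in $V[G]$: at capture stages I choose $N_{i+1}\ni N_i$ reflecting the closed-side name, placing $i\in S_0$; at fresh stages, having just passed a new inaccessible $\delta_i$, I select $N_i$ via Fact \ref{KruegerFact} so that $N_i\cap\lambda$ is new, placing $i\in S_1$. Arranging the schedule so that both the capture stages and the fresh inaccessibles occur on stationary sets makes both $S_0$ and $S_1$ stationary, and $N=\bigcup_iN_i$ is closed under $f$ since each $N_i$ is.

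The main obstacle is the simultaneous control of genuine membership in the single final model $N$: I must guarantee that a newness choice at $i\in S_1$ is not undone by some later $N_j\ni N_i\cap\lambda$, while capture choices at $S_0$ are not spoiled by the Cohen reals. I would resolve this by bookkeeping name-supports. Because $\dM$ is $\lambda$-cc and $|N|=\mu<\lambda$, every element of $N$ that is a small subset of $\lambda$ has a $\dM$-name using only boundedly many Cohen coordinates; choosing, at each fresh stage, the inaccessible $\delta_i$ above every coordinate used so far forces $N_i\cap\lambda$ to depend essentially on a coordinate that $N$ never names, so $N_i\cap\lambda\notin N$, whereas the closed-side capture mechanism is untouched. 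The harder half is this non-club direction, and the additional care needed to make the obstruction live in $[N\cap\lambda]^{<\mu}$ rather than merely in $[N]^{<\mu}$ is exactly the refinement of \cite[Lemma 29]{LevineDisjointStatSeq} that later renders the property preservable under the tail forcings. Finally, arbitrary $\Theta\geq\lambda$ is handled uniformly, since the construction uses only $\lambda\subseteq H(\Theta)$ and the failure of club is detected after projecting to $[N\cap\lambda]^{<\mu}$.
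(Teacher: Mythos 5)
Your reduction to a continuous chain $(N_i)_{i<\mu}$ with $S_0=\{i: N_i\in N\}$ and $S_1=\{i: N_i\cap\lambda\notin N\}$ both stationary is sound, but the construction of such a chain has a genuine gap, located exactly at the point you call ``the main obstacle''. Choosing the inaccessible $\delta_i$ above all Cohen coordinates used \emph{so far} only separates $N_i\cap\lambda$ from elements of $N$ contributed at stages $j<i$; it says nothing about stages $j>i$. And the later stages cannot be arranged by stationarity: for the fixed set $y_i:=N_i\cap\lambda\in H^{V[G]}(\Theta)$, the collection $\{X\in[H^{V[G]}(\Theta)]^{<\mu}\;|\;y_i\in X\}$ is a club (it is the cone above $\{y_i\}$), so the models avoiding $y_i$ form a non-stationary set. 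Hence at a later fresh stage Krueger's stationary set cannot be intersected with ``avoid $y_i$'', and at a later capture stage nothing guarantees that a model containing $N_j$ as an element, and $\bigcup_{l<j}N_l$ as a subset, omits $y_i$. To rule this out you would need name-support control over \emph{every} element of \emph{every} later model, and a recursion carried out in $V[G]$ that picks its models from stationary sets cannot provide it. There is also a second problem: continuity forces $N_i=\bigcup_{j<i}N_j$ at limit stages, so your ``fresh'' selections can only occur at successors; but a subset of $\mu$ consisting of successor ordinals is non-stationary, so for $S_1$ to be stationary you would additionally need freshness to propagate to limit points, i.e.\ that $\bigcup_{j<i}(N_j\cap\lambda)\notin N$, which does not follow from the individual pieces lying outside $N$.

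The paper avoids the recursion entirely, and this is where Mahloness is really used (your proposal consumes only unboundedly many inaccessibles below $\lambda$, which is a warning sign). By Harrington--Shelah reflection one fixes, in $V$, a single $M\prec H^V(\Theta')$ with $\nu:=M\cap\lambda=|M|$ inaccessible and $[M]^{<\nu}\subseteq M$, and sets $N:=M[G]\cap H^{V[G]}(\Theta)$. This one choice provides, for free and uniformly, the support bound your bookkeeping tries to build stage by stage: every $x\in[N\cap\lambda]^{<\mu}\cap N$ has an $\Add^*(\omega,\lambda)$-name of size ${<}\,\mu$ lying in $M$, hence contained in $M$, hence with support inside $\nu$, so $x\in V[G']$, where $G'$ is the generic for $\dM^+(\omega,\mu,\nu)$ induced by $G$. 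Non-internal-clubness then follows from a single application of Fact \ref{KruegerFact} to the Cohen real at coordinate $\nu$: stationarily many $X\cap\nu$ lie outside $V[G']$, so every club of $[\nu]^{<\mu}=[N\cap\lambda]^{<\mu}$ meets the complement of $N$. Internal stationarity is likewise automatic rather than hand-built: every element of $[M]^{<\nu}\cap V[G']$ belongs to $M[G]$ by a name-coding argument using $[M]^{<\nu}\subseteq M$ and the $\nu$-cc, and such traces are stationary by an $\IA(\omega)$-preservation argument through the quotient of Lemma \ref{Decomp}. If you try to repair your recursion, what you will find yourself needing is precisely this ground-model $M$; at that point your proof becomes the paper's.
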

	
	Note that this directly implies that any such $N$ is not internally club: If $\{N_i\;|\;i\in\mu\}$ were club in $[N]^{<\mu}$ and contained in $N$, $\{N_i\cap\lambda\;|\;i\in\mu\}$ would contain a club in $[N\cap\lambda]^{<\mu}$ and be contained in $N$, since we can assume without loss of generality that $N$ is elementary in $H(\Theta)$ and $\lambda\in N$.
	
	\begin{proof}[Proof of Theorem \ref{BetterDist}]
		Write $\dM:=\dM^+(\omega,\mu,\lambda)$. Let $G$ be $\dM$-generic. In $V[G]$, let $F\colon[H^{V[G]}(\Theta)]^{<\omega}\to[H^{V[G]}(\Theta)]^{\mu}$. We aim to find $N\in[H^{V[G]}(\Theta)]^{\mu}$ which is closed under $F$ and internally stationary but not internally club. Let $\dot{F}$ be a name such that $\dot{F}^G=F$. In $V$, apply the Mahloness of $\lambda$ (see e.g. \cite{HarringtonShelahExactEquiconsistencies}) to find $M\prec H^V(\Theta')$ (for $\Theta'$ so large that $\dot{F}\in H^V(\Theta')$) such that
		\begin{enumerate}
			\item $M$ contains $\dot{F},\dM,\mu,\lambda,\Theta$,
			\item $\nu:=M\cap\lambda=|M|$ is an inaccessible cardinal below $\lambda$,
			\item $[M]^{<\nu}\subseteq M$.
		\end{enumerate}
		
		It follows that $N:=M[G]\cap H^{V[G]}(\Theta)\in[H^{V[G]}(\Theta)]^{\mu}$ is closed under $\dot{F}^G$. So all that is left to show is that $N$ is internally stationary but not internally club. Let $G'$ be the $\dM(\omega,\mu,\nu)$-generic filter induced by $G$. We note that $M[G]\cap V=M$ by the $\lambda$-cc of $\dM$.
		\setcounter{myclaim}{0}
		\begin{myclaim}\label{Claim1}
			If $x\in[M]^{<\nu}\cap V[G']$, $x\in M[G]$.
		\end{myclaim}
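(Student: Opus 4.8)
The plan is to identify $\dM^+(\omega,\mu,\nu)$ with the restriction $\dM\cap M$ (where $\dM=\dM^+(\omega,\mu,\lambda)$) and then manufacture a name for $x$ that is small enough to be captured by $M$.

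First I would verify that $\dM^+(\omega,\mu,\nu)=\dM\cap M$. Since $M\cap\lambda=\nu$, every ordinal below $\nu$ — in particular every inaccessible below $\nu$ and every successor of such — lies in $M$, while no inaccessible in $[\nu,\lambda)$ does. Hence a condition $(p,q)\in\dM$ lies in $M$ precisely when the finitely many coordinates of $p$ and the ${<}\,\mu$ coordinates of $q$ all come from inaccessibles below $\nu$, i.e. precisely when $(p,q)\in\dM^+(\omega,\mu,\nu)$; the nontrivial inclusion $\dM^+(\omega,\mu,\nu)\subseteq M$ uses $[M]^{<\nu}\subseteq M$ to see that each term $q(\delta)$ (an $\Add^*(\omega,\delta+1)$-name of size ${<}\,\nu$ with components in $M$) and then $q$ itself lie in $M$. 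By Lemma \ref{Decomp}, $\dM^+(\omega,\mu,\nu)$ is a complete suborder of $\dM$ and $G'=G\cap\dM^+(\omega,\mu,\nu)$; combined with the previous observation this reads $G'=G\cap M$. In particular, any $\dM^+(\omega,\mu,\nu)$-name lying in $M$ is also a $\dM$-name in $M$, and since it only mentions conditions from $\dM^+(\omega,\mu,\nu)$, it has the same evaluation under $G'$ and under $G$.

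Now take $x\in[M]^{<\nu}\cap V[G']$ and set $\theta:=|x|<\nu$, an ordinal of $V$. Working in $V$, I would fix a $\dM^+(\omega,\mu,\nu)$-name $\dot x$ for $x$ and a condition $r_0\in G'$ forcing that $\dot x\subseteq\check M$ has size $\check\theta$, together with a name $\dot g$ for a surjection from $\check\theta$ onto $\dot x$. For each $\xi<\theta$ choose a maximal antichain $A_\xi$ below $r_0$ deciding $\dot g(\check\xi)$. By Lemma \ref{OmnibusLemma}(3), $\dM^+(\omega,\mu,\nu)$ is $\nu$-cc, so $|A_\xi|<\nu$, and since $r_0$ forces the value into $\check M$, each $r\in A_\xi$ decides $\dot g(\check\xi)=\check a^r_\xi$ for some $a^r_\xi\in M$. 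Assembling the nice name
$$\dot g'=\{(\op(\check\xi,\check a^r_\xi),r)\;|\;\xi<\theta,\ r\in A_\xi\}$$
yields $r_0\Vdash\dot g'=\dot g$.

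The heart of the argument is the size bookkeeping. Because $\nu$ is inaccessible (in particular regular), $\theta<\nu$, and each $|A_\xi|<\nu$, the name $\dot g'$ has size ${<}\,\nu$, and all of its components — ordinals below $\nu$, values $a^r_\xi\in M$, and conditions $r\in\dM^+(\omega,\mu,\nu)\subseteq M$ — lie in $M$; hence $\dot g'\in[M]^{<\nu}\cap V\subseteq M$. By the last remark of the second paragraph, $g:=(\dot g')^{G'}=(\dot g')^G\in M[G]$, and since $g$ surjects $\theta$ onto $x$ we obtain $x=\operatorname{ran}(g)\in M[G]$ (the range name read off from $\dot g'$ likewise lies in $M$). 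The main obstacle is exactly this estimate: a naive nice name for $x$ as a subset of $M$ could have size $\nu$ and thus fail to be an element of $M$; naming instead a surjection out of the ordinal $\theta=|x|$ and invoking the $\nu$-cc together with the regularity of $\nu$ is what forces the name into $[M]^{<\nu}$.
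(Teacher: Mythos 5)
Your proof is correct and follows essentially the same route as the paper's: the paper also takes a $\dM^+(\omega,\mu,\nu)$-name for $x$, uses the $\nu$-cc together with $\dM^+(\omega,\mu,\nu)\subseteq M$ to code it as a ${<}\,\nu$-sized subset of $M$, and concludes via $[M]^{<\nu}\subseteq M$ that the name (hence its evaluation) lands in $M[G]$. Your naming of a surjection $g\colon\theta\to x$ and the verification that $G'=G\cap\dM^+(\omega,\mu,\nu)$ simply make explicit the coding and the agreement of evaluations that the paper leaves implicit.
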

		
		\begin{proof}
			Let $\tau$ be an $\dM^+(\omega,\mu,\nu)$-name for a subset of $M$. By the $\nu$-cc of $\dM^+(\omega,\mu,\nu)$ and since $\dM^+(\omega,\mu,\nu)\subseteq M$ we can code $\tau$ as a ${<}\,\nu$-sized subset of $M$ which therefore is an element of $M$. Ergo $\tau^G\in M[G]$.
		\end{proof}
		
		In particular, $M[G]$ is closed under ${<}\,\nu$-sequences of ordinals lying in $V[G']$. Furthermore, $M[G]$ contains a bijection $\iota$ between some cardinal $\Delta$ and $H^{V[G]}(\Theta)$. This bijection restricts to one between $M[G]\cap\Delta=M\cap\Delta$ and $M[G]\cap H^{V[G]}(\Theta)$. We first show:
		
		\begin{myclaim}\label{Claim2}
			$[M\cap\Delta]^{<\mu}\cap M[G]$ is stationary in $[M\cap\Delta]^{<\mu}$ in $V[G]$.
		\end{myclaim}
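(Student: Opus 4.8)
The plan is to leverage the closure of $M[G]$ recorded in Claim \ref{Claim1}. Every $x\in[M\cap\Delta]^{<\mu}$ lying in $V[G']$ is a subset of $M$ of size ${<}\,\nu$ belonging to $V[G']$, so Claim \ref{Claim1} yields $[M\cap\Delta]^{<\mu}\cap V[G']\subseteq M[G]$. Consequently it suffices to show that the smaller set $S_0:=[M\cap\Delta]^{<\mu}\cap V[G']$ is stationary in $[M\cap\Delta]^{<\mu}$ in $V[G]$, for then its superset $[M\cap\Delta]^{<\mu}\cap M[G]$ is stationary as well. In $V[G']$ the set $S_0$ is the entire space $[M\cap\Delta]^{<\mu}$ and hence trivially stationary; the content of the claim is that this stationarity is not destroyed on the way to $V[G]$.

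To control that passage I would use Lemma \ref{Decomp} to write $V[G]=V[G'][g][h]$, where $g$ is $\Add(\omega)$-generic over $V[G']$ and $h$ is $\Omega$-generic over $V[G'][g]$, and where, in $V[G'][g]$, the poset $\Omega$ is the projection of a product $\dR\times\dK$ of a ccc poset $\dR$ and a ${<}\,\mu$-closed poset $\dK$. Since $\Omega$ is itself neither $\mu$-cc nor ${<}\,\mu$-closed, I would not work in $V[G]$ directly. Instead, using that any $\Omega$-generic extension can be further extended to a $(\dR\times\dK)$-generic one, I would fix generics $h_{\dK},h_{\dR}$ and pass to the outer model $W:=V[G'][g][h_{\dK}][h_{\dR}]\supseteq V[G]$, noting that $S_0\in V[G]\subseteq W$.

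I would then prove that $S_0$ is stationary in $W$ by climbing the tower $V[G']\subseteq V[G'][g]\subseteq V[G'][g][h_{\dK}]\subseteq W$ one factor at a time. The step to $V[G'][g]$ is a ccc (hence $\mu$-cc) forcing, so Lemma \ref{StatPres} preserves the stationarity of $S_0$. The step to $V[G'][g][h_{\dK}]$ is a ${<}\,\mu$-closed forcing; such a forcing adds no new elements to $[M\cap\Delta]^{<\mu}$ and, by the standard master-condition argument, preserves its stationary subsets. Finally $\dR$ is still ccc over $V[G'][g][h_{\dK}]$ by Easton's lemma, so a second application of Lemma \ref{StatPres} shows $S_0$ remains stationary in $W$.

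To transfer stationarity back down to $V[G]$ I would use its downward absoluteness for sets lying in the inner model: if some club $C\in V[G]$ met $S_0$ trivially, then by Menas' characterization $C$ would contain all closure points (in $V[G]$) of some $f\colon[M\cap\Delta]^{<\omega}\to[M\cap\Delta]^{<\mu}$ with $f\in V[G]$; the closure points of $f$ computed in $W$ form a club there, which meets $S_0$ in some $x$, and since $x\in S_0\subseteq V[G']$ is closed under $f$ it is already a closure point of $f$ in $V[G]$, forcing $x\in C\cap S_0$ --- a contradiction. Hence $S_0$, and therefore $[M\cap\Delta]^{<\mu}\cap M[G]$, is stationary in $V[G]$. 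I expect the main obstacle to be exactly this bookkeeping around $\Omega$: because it is only a projection of the well-behaved product $\dR\times\dK$, the preservation lemmas cannot be applied to it in $V[G]$, and it is the detour through $W$ combined with the downward-absoluteness argument that makes the closure of $M[G]$ usable.
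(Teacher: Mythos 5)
Your reduction to $S_0:=[M\cap\Delta]^{<\mu}\cap V[G']$, the passage to the outer model $W$, and the downward-absoluteness argument at the end are all sound and match the paper's overall strategy. The gap is the middle step: the assertion that the ${<}\,\mu$-closed factor $\dK$ ``preserves stationary subsets of $[M\cap\Delta]^{<\mu}$ by the standard master-condition argument.'' This is not a theorem of $\ZFC$ once $\mu>\omega_1$, and $\mu>\omega_1$ is exactly the relevant case (in the application to Theorem \ref{Thm2} one takes $\mu=\kappa_{n-1}$). The master-condition argument builds a descending chain of conditions inside an elementary submodel certifying a point of the stationary set; at limit stages of uncountable length one needs a lower bound of the chain \emph{inside that model}, which requires the model to be (weakly) internally approachable. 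This is precisely why Fact \ref{IAPreserve} carries the hypothesis $S\subseteq\IA(\tau)$, and the set $S_0$ carries no such approachability structure: its elements are arbitrary ${<}\,\mu$-sized sets from $V[G']$. (Note also that ``$\dK$ adds no new elements of $[M\cap\Delta]^{<\mu}$'' does not help: a ${<}\,\mu$-distributive forcing can add a new club consisting entirely of old points which avoids an old stationary set; this is exactly what the club-shooting poset in Section 3 and the collapses hidden inside $\Omega$ can do.) For $\mu=\omega_1$ your step happens to be correct, since countably closed forcing is proper, but the claim is needed for all regular uncountable $\mu$.

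Moreover, the gap cannot be closed by running a chain construction in $V[G'][g]$: any such construction produces a closure point $x$ of the relevant club function, but $x$ is built using the new real $g$, and nothing forces $x\in V[G']$; by Fact \ref{KruegerFact} there are in fact stationarily many closure points \emph{outside} $V[G']$, so the argument must actively steer into $V[G']$. That steering is the real content of the paper's proof, which your proposal skips: working in $V[G']$, one chooses $X\prec H^{V[G']}(\Delta')$ with $X\in\IA(\omega)$ containing an $\Add(\omega)$-name $\dot{F}$ for the club function; then the trace $X[H]\cap(M\cap\Delta)=X\cap(M\cap\Delta)$ is automatically an element of $V[G']$ by the ccc of $\Add(\omega)$, and the lifted models $X[H]\cap H^{V[G'']}(\Delta)$ form a stationary set of $\IA(\omega)$-models whose traces land in $S_0$. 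Its stationarity survives the ${<}\,\mu$-closed factor by Fact \ref{IAPreserve} (this is where $\IA(\omega)$ is indispensable) and the ccc factors by Lemma \ref{StatPres}, and then pulls down to $V[G]$ exactly as in your last paragraph. So your outer-model bookkeeping is fine, but the step you call ``standard'' is the one place where the $\IA(\omega)$ machinery must be inserted, and without it the proof does not go through.
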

		
		\begin{proof}
			By Claim \ref{Claim1}, $[M\cap\Delta]^{<\mu}\cap M[G]\supseteq[M\cap\Delta]^{<\mu}\cap V[G']$, so it suffices to show that the latter set is stationary in $[M\cap\Delta]^{<\mu}$. We first claim that in $V[G'']$ (where $G''$ is the $\dM^+(\omega,\mu,\nu)*\Add(\omega)$-generic filter induced by $G$) the set
			$$S:=\{X\in[H^{V[G'']}(\Delta)]^{<\mu}\;|\;X\in\IA(\omega)\wedge X\cap(M\cap\Delta)\in V[G']\}$$
			is stationary in $[H^{V[G'']}(\Delta)]^{<\mu}$. Whenever $F\colon[H^{V[G'']}(\Delta)]^{<\omega}\to[H^{V[G'']}(\Delta)]^{<\mu}$ and $\dot{F}$ is an $\Add(\omega)$-name for $F$ in $V[G']$, we can find $X\prec H^{V[G']}(\Delta')$ ($X\in V[G']$) for $\Delta'$ large enough with $\dot{F}\in X$ such that $X\in\IA(\omega)$. It follows that $X[H]\cap H^{V[G'']}(\Delta)$ (where $H$ is $\Add(\omega)$-generic with $G''=G'*H$) is closed under $F$, $X[H]\cap H^{V[G'']}(\Delta)\in\IA(\omega)$ (as witnessed by $(X_i[H]\cap H^{V[G'']}(\Delta))_{i\in\omega}$, where $(X_i)_{i\in\omega}$ witnesses $X\in\IA(\omega)$) and $X[H]\cap H^{V[G'']}(\Delta)\cap(M\cap\Delta)=X\cap(M\cap\Delta)\in V[G']$ by the ccc of $\Add(\omega)$.
			
			Since $V[G]$ is an extension of $V[G'']$ using $\Omega$, a poset which can be projected onto from the product of a ccc and a ${<}\,\mu$-closed poset (see Lemma \ref{Decomp}), $S$ is still stationary in $[H^{V[G'']}(\Delta)]^{<\mu}$ in $V[G]$ (by Fact \ref{IAPreserve}) from which it follows that $\{X\cap(M\cap\Delta)\;|\;X\in S\}$ is stationary in $[M\cap\Delta]^{<\mu}$ in $V[G]$. Since that set is contained in $V[G']$, we are done.
		\end{proof}
		
		This readily implies:
		
		\begin{myclaim}
			$N$ is internally stationary.
		\end{myclaim}
		
		\begin{proof}
			Recall that $\iota\uhr(M\cap\Delta)$ is a bijection between $M\cap\Delta$ and $M[G]\cap H^{V[G]}(\Theta)$ (which equals $N$). Let $c\subseteq[N]^{<\mu}$ be club. Then $\{\iota^{-1}[x]\;|\;x\in c\}$ is club in $[M\cap\Delta]^{<\mu}$ and so by Claim \ref{Claim2} there is $x\in c$ with $\iota^{-1}[x]\in M[G]$. However, since $\iota\in M$, $x=\iota[\iota^{-1}[x]]$ is in $M[G]$ as well and is in $H^{V[G]}(\Theta)$ by its size. So $N\cap c$ is nonempty.
		\end{proof}
		
		Now we show that $N$ is not ``internally club in $\lambda$'', i.e. there is no club in $[N\cap\lambda]^{<\mu}$ which is contained in $N$. We first show the following converse to Claim \ref{Claim1}:
		
		\begin{myclaim}
			If $x\in [N\cap\lambda]^{<\mu}\cap N$, $x\in V[G']$.
		\end{myclaim}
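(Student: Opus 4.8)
The plan is to prove the stronger statement that \emph{every} $x\in M[G]$ with $x\subseteq\nu$ and $|x|<\mu$ already lies in $V[G']$, and then read off the claim. First I would record that
\[
N\cap\lambda=M[G]\cap\lambda=M\cap\lambda=\nu,
\]
using $M[G]\cap V=M$; thus an $x\in[N\cap\lambda]^{<\mu}\cap N$ is a subset of $\nu$ of size ${<}\,\mu$ lying in $M[G]$. I would then fix a $\dM$-name $\tau\in M$ with $\tau^G=x$.

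The structural facts I would establish are that $\dM\cap M=\dM^+(\omega,\mu,\nu)$ and that $G'=G\cap\dM^+(\omega,\mu,\nu)\subseteq G$. For the inclusion $\dM\cap M\subseteq\dM^+(\omega,\mu,\nu)$: if $(p,q)\in\dM\cap M$, then $p$ is finite and $\dom(q)$ has size ${<}\,\mu<\nu$, so (since $\mu\subseteq M$, and a finite set or a ${<}\,\mu$-sized set enumerated by a function in $M$ is contained in $M$) every coordinate occurring in $p$ or in $\dom(q)$ lies in $M\cap\lambda=\nu$; hence $(p,q)\in\dM^+(\omega,\mu,\nu)$. The reverse inclusion is exactly the computation $\dM^+(\omega,\mu,\nu)\subseteq M$ from the proof of Claim \ref{Claim1}. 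Using the natural restriction projection, $\dM^+(\omega,\mu,\nu)$ is then a suborder of $\dM$ and the induced generic satisfies $G'=G\cap\dM^+(\omega,\mu,\nu)$, so in particular $G'\subseteq G$.

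Next I would define, inside $V[G']$,
\[
x':=\{\alpha<\nu \;|\; \exists r\in G'\ (r\Vdash_{\dM}\check\alpha\in\tau)\}
\]
and show $x'=x$. The step I expect to be the real content is the verification that for each $\alpha<\nu$ one may pick, by elementarity, a maximal antichain $A_\alpha\in M$ of conditions deciding $\check\alpha\in\tau$ with $A_\alpha\subseteq\dM^+(\omega,\mu,\nu)$. This is where $M\cap\lambda=\nu$ is doing all the work: since $\dM$ is $\lambda$-cc, $M$ computes $|A_\alpha|$ to be some $\rho<\lambda$; but $\rho\in M$ and $M\cap\lambda=\nu$ force $\rho<\nu$, so $A_\alpha$ is enumerated in $M$ by a surjection with domain $\rho\subseteq M$, whence $A_\alpha\subseteq M$, i.e.\ $A_\alpha\subseteq\dM\cap M=\dM^+(\omega,\mu,\nu)$. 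Being a maximal antichain of $\dM$ contained in the suborder, $A_\alpha$ is maximal in $\dM^+(\omega,\mu,\nu)$ as well.

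Finally I would meet each $A_\alpha$ with the generic: as $G'$ is $\dM^+(\omega,\mu,\nu)$-generic it contains a unique $r\in A_\alpha$, and since $G'\subseteq G$ this $r$ decides $\check\alpha\in\tau$ the same way $G$ does. Hence $\alpha\in x$ iff $r\Vdash_{\dM}\check\alpha\in\tau$ iff $\alpha\in x'$, giving $x=x'\in V[G']$. I would emphasize that no appeal to the decomposition of Lemma \ref{Decomp} or to Easton's lemma is required here; the only subtle point is the antichain-size bound just described, which is precisely what forbids the value of $\tau$ from depending on coordinates in the interval $[\nu,\lambda)$.
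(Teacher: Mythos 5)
Your proof is correct, but it takes a genuinely different route from the paper's. The paper stays inside the term-forcing framework: since $|x|<\mu$, Lemma \ref{OmnibusLemma} (2) together with Easton's lemma shows that $x$ is already added by the Cohen coordinate $\Add^*(\omega,\lambda)$; the ccc then allows one to pick an $\Add^*(\omega,\lambda)$-name $\tau\in M$ for $x$ of size ${<}\,\mu$, and since $\mu\subseteq M$ such a $\tau$ is contained in $M$, hence is literally an $\Add^*(\omega,M\cap\lambda)$-name, so $x$ lies in the extension by the Cohen part below $\nu$ and in particular in $V[G']$. You bypass this decomposition entirely and argue with a $\dM$-name for $x$: for each $\alpha<\nu$ you capture a maximal antichain $A_\alpha\in M$ of conditions deciding $\check\alpha\in\tau$, and the $\lambda$-cc of $\dM$ together with $M\cap\lambda=\nu$ gives $|A_\alpha|<\nu$, hence $A_\alpha\subseteq M$, so $A_\alpha\subseteq\dM\cap M=\dM^+(\omega,\mu,\nu)$ and remains maximal there; meeting these antichains with $G'=G\cap\dM^+(\omega,\mu,\nu)$ computes $x$ inside $V[G']$. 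This is precisely the antichain-capture technique of the paper's preliminary lemma that $M[G]\cap V=M$ for $\kappa$-cc posets, upgraded from deciding a single value to deciding a subset of $\nu$ coordinatewise. Your route buys two things: it needs neither the projection analysis nor Easton's lemma at this point, and it never uses $|x|<\mu$, so it actually proves the stronger statement that \emph{every} subset of $\nu$ in $M[G]$, of any cardinality, lies in $V[G']$. The paper's route is shorter given the machinery already in place and yields the finer localization that $x$ is captured by the Cohen part alone, though that extra precision is not used later in the proof of Theorem \ref{BetterDist}. In a final write-up you should make explicit the one fact your argument leans on silently in two places: that the restriction map $(p,q)\mapsto(p\uhr(\nu\times\omega),q\uhr\nu)$ is a projection satisfying $g\leq\pi(g)$ for every $g\in\dM$; this is what simultaneously gives $G'=G\cap\dM^+(\omega,\mu,\nu)\subseteq G$ and the transfer of maximality of $A_\alpha$ to the suborder, since a common extension in $\dM$ of two conditions of $\dM^+(\omega,\mu,\nu)$ projects to a common extension in $\dM^+(\omega,\mu,\nu)$.
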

		
		\begin{proof}
			Since $|x|<\mu$, $x$ has been added by $\Add^*(\omega,\lambda)$ by Lemma \ref{OmnibusLemma} (2). Let $\tau$ be an $\Add^*(\omega,\lambda)$-name for $x$, $\tau\in M$. We can assume by the ccc of $\Add^*(\omega,\lambda)$ that $|\tau|<\mu$. It follows that $\tau\subseteq M$, so $\tau$ is an $\Add^*(\omega,M\cap\lambda)$-name. Ergo $x\in V[G']$.
		\end{proof}
		
		So assume that $c$ is club in $[N\cap\lambda]^{<\mu}=[\nu]^{<\mu}$. We will show that there is $x\in c$ with $x\notin V[G']$ which directly implies that $c$ is not contained in $N$.
		
		By Fact \ref{KruegerFact} (with $V[G']$ in lieu of $V$, $V[G'']$ in lieu of $W$ and $\nu$ in lieu of $X$ as well as $\Theta$), in $V[G'']$ there are stationarily many $X\in[H^{V[G'']}(\nu)]^{<\mu}\cap\IA(\omega)$ with $X\cap\nu\notin V[G']$. It follows as before that that same set is still stationary in $[H^{V[G'']}(\nu)]^{<\mu}$ in $V[G]$. So clearly there is such an $X$ with $X\cap\nu\in c$. However, this directly implies $X\cap\nu\notin V[G']$.
	\end{proof}
	
	\begin{mybem}
		As we are only using a Mahlo cardinal to obtain a distinction between internal clubness and approachability for stationarily many $N\in[H(\Theta)]^{\mu}$ for any $\Theta\geq\mu^+$, this theorem directly resolves a case of \cite[Question 12.7]{KruegerApplicMSI}.
	\end{mybem}
	
	We now prove two preservation theorems which will finish the proof of Theorem \ref{Thm2}. For simpler notation, we let $\ISNIC^+(\mu^+)$ state that for every $\Theta\geq\mu^+$ there are stationarily many $N\in[H(\Theta)]^{\mu}$ which are internally stationary but do not contain a club in $[N\cap\mu^+]^{<\mu}$. In this notation, Theorem \ref{BetterDist} states that $\dM^+(\omega,\mu,\lambda)$ forces $\ISNIC^+(\mu^+)$ whenever $\lambda$ is Mahlo.
	
	\begin{mylem}\label{PreserveUp}
		Let $\mu>\aleph_0$ be a regular cardinal and $\dQ$ a poset. Assume that $\ISNIC^+(\mu^+)$ holds and $\dQ$ is $\mu$-cc. Then $\dQ$ forces $\ISNIC^+(\mu^+)$.
	\end{mylem}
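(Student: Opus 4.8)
The plan is to produce each witness in $V[G]$ (for $G$ a $\dQ$-generic filter) as $N=M[G]\cap H^{V[G]}(\Theta)$, for a carefully chosen ground-model elementary submodel $M$. So fix $\Theta\geq\mu^+$ and, in $V[G]$, a function $F\colon[H^{V[G]}(\Theta)]^{<\omega}\to[H^{V[G]}(\Theta)]^{\mu}$; let $\dot F$ be a $\dQ$-name for it and pick $\Theta'>\Theta$ with $\dot F,\dQ,\Theta,\mu^+\in H^V(\Theta')$. Applying $\ISNIC^+(\mu^+)$ in $V$ at the level $\Theta'$ yields stationarily many $N'\in[H^V(\Theta')]^{\mu}$ which are internally stationary and contain no club of $[N'\cap\mu^+]^{<\mu}$. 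Intersecting this stationary set with the club of all $M\prec H^V(\Theta')$ with $\mu\subseteq M$ and $\dot F,\dQ,\Theta,\mu^+\in M$, I obtain such an $M$ which is in addition elementary and contains the relevant parameters.

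Next I would record the basic properties of $M$ and of $N:=M[G]\cap H^{V[G]}(\Theta)$. Since $\dQ$ is $\mu$-cc and $\mu\subseteq M$, the argument of the lemma establishing $M[G]\cap V=M$ applies verbatim (the antichain deciding a given name has size $<\mu\subseteq M$, hence lies in $M$), so $M[G]\cap V=M$; in particular $N\cap\mu^+=M\cap\mu^+$. By the standard generic-elementarity fact, $M[G]\prec H^{V[G]}(\Theta')$ and $\mu\subseteq M[G]$. Consequently $N$ is closed under $F$: for finite $a\subseteq N$ we have $F(a)\in M[G]$, and picking inside $M[G]$ a surjection $e\colon\mu\to F(a)$ and using $\mu\subseteq M[G]$ shows $F(a)=e[\mu]\subseteq M[G]$; as $F(a)\subseteq H^{V[G]}(\Theta)$ this gives $F(a)\subseteq N$. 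Finally $|N|=\mu$, since $\mu\subseteq N\subseteq M[G]$ and $|M[G]|=\mu$.

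For the ``no internal club'' requirement, suppose toward a contradiction that some $c\in V[G]$ were club in $[N\cap\mu^+]^{<\mu}=[M\cap\mu^+]^{<\mu}$ with $c\subseteq N$. By Lemma \ref{StatPres} (applied with $X=M\cap\mu^+$, which has size $\mu$) there is $D\in V$, $D\subseteq c$, that is club in $[M\cap\mu^+]^{<\mu}$ in $V[G]$; being a ground-model set contained in $c\subseteq N$, it satisfies $D\subseteq M[G]\cap V=M$, and $D$ is already club in $[M\cap\mu^+]^{<\mu}$ in $V$ (cofinality and closure for a set in $V$ are absolute between $V$ and $V[G]$). This contradicts the choice of $M$ as a model containing no $V$-club of $[M\cap\mu^+]^{<\mu}$.

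Internal stationarity of $N$ is the delicate point, and I expect it to be the main obstacle. The plan is to transfer it to the ground model through a bijection. Choose $\iota\in M[G]$, a bijection of $\Delta:=|H^{V[G]}(\Theta)|$ onto $H^{V[G]}(\Theta)$; it restricts to a bijection of $M[G]\cap\Delta=M\cap\Delta$ onto $N$, reducing internal stationarity of $N$ to the assertion that $[M\cap\Delta]^{<\mu}\cap M$ is stationary in $[M\cap\Delta]^{<\mu}$ in $V[G]$. By Lemma \ref{StatPres} it suffices to prove this stationarity in $V$, where it follows from internal stationarity of $M$: given a club $C\subseteq[M\cap\Delta]^{<\mu}$, the set $\{y\in[M]^{<\mu}\;|\;y\cap\Delta\in C\}$ is club in $[M]^{<\mu}$ (preimages of clubs under the projection $y\mapsto y\cap\Delta$ are club, by Menas' characterization), so it meets $[M]^{<\mu}\cap M$ in some $y$, whence $y\cap\Delta\in C\cap M$. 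The care needed here is in keeping the three stationarity spaces aligned -- $[N]^{<\mu}$, $[M\cap\Delta]^{<\mu}$ computed in $V[G]$, and $[M\cap\Delta]^{<\mu}$ computed in $V$ -- using $M[G]\cap V=M$ to identify $M[G]\cap\Delta$ with $M\cap\Delta$ and invoking Lemma \ref{StatPres} to move stationarity from $V$ to $V[G]$.
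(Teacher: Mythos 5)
Your proof is correct, and its skeleton is the same as the paper's: choose $M\prec H^V(\Theta')$ witnessing $\ISNIC^+(\mu^+)$ in $V$ with the name $\dot F$ and the other parameters inside, set $N:=M[G]\cap H^{V[G]}(\Theta)$, use the $\mu$-cc fact $M[G]\cap V=M$, and kill any purported internal club of $[N\cap\mu^+]^{<\mu}=[M\cap\mu^+]^{<\mu}$ by extracting a ground-model club via Lemma \ref{StatPres} (your contradiction formulation and the paper's ``pick $n\in c\smallsetminus M$'' are the same argument). The one place you genuinely diverge is internal stationarity of $N$. The paper first notes that $M$ itself remains internally stationary in $V[G]$ (Lemma \ref{StatPres} applied to $[M]^{<\mu}\cap M$) and then, in $V[G]$, pulls a club $c\subseteq[N]^{<\mu}$ back to the club of those $m\prec M$ with $m[G]\cap H^{V[G]}(\Theta)\in c$ (the ``interleaving argument''). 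You instead transfer the whole problem through a bijection $\iota\in M[G]$ between $\Delta=|H^{V[G]}(\Theta)|$ and $H^{V[G]}(\Theta)$, reduce to stationarity of $[M\cap\Delta]^{<\mu}\cap M$ in $[M\cap\Delta]^{<\mu}$, prove that statement in $V$ by lifting clubs from $[M\cap\Delta]^{<\mu}$ to $[M]^{<\mu}$, and only then apply Lemma \ref{StatPres}. This is exactly the technique of the paper's proof of Theorem \ref{BetterDist}, transplanted to the preservation lemma; it buys you a trivial club transfer (bijections preserve clubs on the nose, so no interleaving is needed) at the cost of doing the reduction in $V$ first. Two small points you should make explicit: (i) the step ``$y\cap\Delta\in M$'' needs $\Delta\in M$, which does hold but only via your own machinery ($\Delta\in M[G]$ by elementarity of $M[G]$, and $\Delta$ is an ordinal, so $\Delta\in M[G]\cap V=M$); and (ii) what the bijection argument literally gives is that $[M\cap\Delta]^{<\mu}\cap M[G]$ corresponds to $[N]^{<\mu}\cap N$, so you should note that producing the smaller set $[M\cap\Delta]^{<\mu}\cap M$ stationary suffices because $M\subseteq M[G]$. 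Neither is a gap, just a line each.
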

	
	\begin{proof}
		Let $G$ be $\dQ$-generic. In $V[G]$, let $F\colon[H^{V[G]}(\Theta)]^{<\omega}\to[H^{V[G]}(\Theta)]^{\mu}$ be any function and let $\dot{F}$ be a $\dQ$-name for $F$. In $V$, let $\Theta'$ be so large that $\dot{F},\dQ\in H^V(\Theta')$ and find $M\prec H^V(\Theta')$ of size $\mu$ with $M\cap\mu^+$ an ordinal such that $\dot{F},\dQ,\Theta\in M$ and $M$ is internally stationary but does not contain a club in $[M\cap\mu^+]^{<\mu}$. By the $\mu$-cc of $\dQ$, $M[G]\cap V=M$ and $M$ remains internally stationary in $V[G]$ (by Lemma \ref{StatPres}). Clearly $N:=M[G]\cap H^{V[G]}(\Theta)$ is closed under $F$.
		\setcounter{myclaim}{0}
		\begin{myclaim}
			$N$ is internally stationary.
		\end{myclaim}
		
		\begin{proof}
			Let $c\subseteq[N]^{<\mu}$ be club. Then the set of all $m\in[M]^{<\mu}$ with $m\prec M$ such that $m[G]\cap H^{V[G]}(\Theta)\in c$ is club in $[M]^{<\mu}$ by a simple interleaving argument. Ergo there exists $m\in M$ such that $m\in[M]^{<\mu}$, $m\prec M$ and $m[G]\cap H^{V[G]}(\Theta)\in c$. But $m[G]\cap H^{V[G]}(\Theta)$ is in $M[G]$ by elementarity and in $H^{V[G]}(\Theta)$ by its size, so it is in $N$.
		\end{proof}
		
		We are finished after showing:
		
		\begin{myclaim}
			$N$ does not contain a club in $[N\cap\mu^+]^{<\mu}$.
		\end{myclaim}
		
		\begin{proof}
			Let $c\subseteq[N\cap\mu^+]^{<\mu}$ be club in $[N\cap\mu^+]^{<\mu}$. Since $N\cap\mu^+=M\cap\mu^+$ (which is a ground-model set) and $\dQ$ has the $\mu$-cc, we can assume that $c\in V$ (by Lemma \ref{StatPres}). By assumption, there is $n\in c$ which is not in $M$ (but it is in $V$ since $c\in V$). But then $n$ is not in $M[G]$ either, since $M[G]\cap V=M$.
		\end{proof}
		
		This finishes the proof.
	\end{proof}
	
	Lastly, we have to show a downward preservation theorem, showing that sufficiently distributive forcings do not force $\ISNIC^+(\mu^+)$ if it does not already hold in the ground model.
	
	\begin{mylem}\label{PreserveDown}
		Let $\mu>\aleph_0$ be a regular cardinal and $\dQ$ a poset. Assume that $\ISNIC^+(\mu^+)$ holds in $V[G]$, where $G$ is $\dQ$-generic, and $\dQ$ is ${<}\,\mu^+$-distributive. Then $\ISNIC^+(\mu^+)$ holds in $V$.
	\end{mylem}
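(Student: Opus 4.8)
The plan is to lift the problem into $V[G]$, apply the witnesses granted there by $\ISNIC^+(\mu^+)$, and pull a single model back down into $V$, using that ${<}\,\mu^+$-distributivity makes the two relevant properties absolute. First I would record what distributivity buys us. Since $\dQ$ is ${<}\,\mu^+$-distributive it adds no new sequence of ground-model elements of length $\leq\mu$; hence $\mu$ and $\mu^+$ are preserved (so $\ISNIC^+(\mu^+)$ makes sense in $V[G]$), and for every $X\in V$ we have $[X]^{\leq\mu}\cap V[G]=[X]^{\leq\mu}\cap V$. Moreover, for a \emph{fixed} $N\in V$ with $|N|=\mu$, the sets $[N]^{<\mu}$ and $[N]^{<\mu}\cap N$ are computed the same in $V$ and in $V[G]$, and every club $C\in V$ in $[N]^{<\mu}$ (or in $[N\cap\mu^+]^{<\mu}$) remains a club in $V[G]$: cofinality survives because $[N]^{<\mu}$ does not grow, and closure under ${<}\,\mu$-chains survives because any such chain in $V[G]$ is a ${<}\,\mu$-sequence of ground-model elements and hence already lies in $V$. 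Consequently, for such $N$, both ``$N$ is internally stationary'' and ``$N$ contains a club in $[N\cap\mu^+]^{<\mu}$'' transfer downward from $V[G]$ to $V$: a set stationary in $V[G]$ meets every $V$-club, and a $V$-club contained in $N$ would still witness internal clubness in $V[G]$.

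Now fix an arbitrary $F\colon[H^V(\Theta)]^{<\omega}\to[H^V(\Theta)]^{\mu}$ in $V$; I may assume $\Theta$ is regular, so that $H^V(\Theta)$ is closed under its subsets of size ${<}\,\mu$. Working in $V[G]$, choose a regular $\Theta^*$ so large that $H^V(\Theta),F,\mu,\dQ,G\in H^{V[G]}(\Theta^*)$. Since $\ISNIC^+(\mu^+)$ holds in $V[G]$, I would apply it at $\Theta^*$ and intersect its stationary set with the club of elementary submodels of $H^{V[G]}(\Theta^*)$ containing these parameters with $\mu\subseteq N^*$, obtaining $N^*\prec H^{V[G]}(\Theta^*)$ which is internally stationary, does not contain a club in $[N^*\cap\mu^+]^{<\mu}$, contains $H^V(\Theta),F,\mu,\dQ,G$, and satisfies $\mu\subseteq N^*$. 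Set $N:=N^*\cap H^V(\Theta)$. Then $N\subseteq H^V(\Theta)$ and $\mu\subseteq N\subseteq N^*$, so $|N|=\mu$ and by distributivity $N\in V$. Closure under $F$ follows by elementarity: for $a\in[N]^{<\omega}$ we have $a\in N^*$, hence $F(a)\in N^*$, and since $F(a)$ is a subset of $H^V(\Theta)$ of size $\mu$ enumerated by a surjection in $N^*$ with domain $\mu\subseteq N^*$, we get $F(a)\subseteq N^*\cap H^V(\Theta)=N$. Finally $N\cap\mu^+=N^*\cap\mu^+$ (all ordinals below $\mu^+$ lie in $H^V(\Theta)$), so a club in $[N\cap\mu^+]^{<\mu}$ contained in $N\subseteq N^*$ would be one contained in $N^*$; as $N^*$ contains no such club, neither does $N$.

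It remains to see that $N$ is internally stationary, which is the main obstacle, since internal stationarity is not inherited by a submodel in general. Here I would use the projection $x\mapsto x\cap N$ from $[N^*]^{<\mu}$ onto $[N]^{<\mu}$. By Menas' characterization, the preimage of any club in $[N]^{<\mu}$ under this map is club in $[N^*]^{<\mu}$, so the image of the stationary set $[N^*]^{<\mu}\cap N^*$ is stationary in $[N]^{<\mu}$. For $x\in[N^*]^{<\mu}\cap N^*$ we have $x\cap N=x\cap H^V(\Theta)$, which lies in $N^*$ by elementarity (as $x,H^V(\Theta)\in N^*$) and in $H^V(\Theta)$ by the regularity of $\Theta$; hence $x\cap N\in N^*\cap H^V(\Theta)=N$, so $x\cap N\in[N]^{<\mu}\cap N$. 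Thus $[N]^{<\mu}\cap N$ contains a stationary set and is therefore stationary in $[N]^{<\mu}$, i.e.\ $N$ is internally stationary in $V[G]$.

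Putting this together, $N\in V$ is internally stationary and contains no club in $[N\cap\mu^+]^{<\mu}$ in $V[G]$, so by the absoluteness of the first paragraph both properties hold of $N$ in $V$ as well. Since $N$ is closed under the arbitrary $F$, this produces, for each club function $F$, a witness inside its associated club, so there are stationarily many such $N\in[H^V(\Theta)]^{\mu}$ in $V$; as $\Theta\geq\mu^+$ was arbitrary, $\ISNIC^+(\mu^+)$ holds in $V$. The one genuinely delicate point is the transfer of internal stationarity through the restriction to $H^V(\Theta)$; the Menas projection handles it, but it is essential that $\Theta$ be taken regular so that the projected sets $x\cap H^V(\Theta)$ actually land in $H^V(\Theta)$, and hence in $N$.
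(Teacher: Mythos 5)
Your proof is correct, and its skeleton matches the paper's: work in $V[G]$, apply $\ISNIC^+(\mu^+)$ at a large $\Theta^*$, restrict the witness to $H^V(\Theta)$, and use ${<}\,\mu^+$-distributivity to land the restriction in $V$. The execution differs in two ways worth noting. First, the paper selects its witness from the club $D=\{M\in[H^V(\Theta')]^{\mu}\;|\;M[G]\cap V=M\}$, so its big model has the form $M[G]$ with $M\in V$, and internal stationarity of $N$ is verified via the lift $m\mapsto m[G]$ of clubs on $[M]^{<\mu}$ to clubs on $[M[G]]^{<\mu}$; you instead take an arbitrary $N^*\prec H^{V[G]}(\Theta^*)$ from the stationary set, pull $N=N^*\cap H^V(\Theta)$ into $V$ directly by distributivity, and verify internal stationarity via the Menas projection $x\mapsto x\cap N$ on $[N^*]^{<\mu}$, showing the projected points land in $N$ by elementarity plus distributivity. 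Your route avoids having to argue that $D$ is club (which the paper calls ``clear'' but does require the name-evaluation argument), at the cost of stating explicitly the absoluteness facts of your first paragraph (clubs and $[N]^{<\mu}$ are the same in $V$ and $V[G]$), which the paper uses silently in its two final claims; the two devices are of comparable weight. Second, you prove both properties of $N$ in $V[G]$ and transfer them down, while the paper proves them in $V$ directly; these are equivalent given your paragraph one. One caveat: at the step $x\cap N\in H^V(\Theta)$ you cite only regularity of $\Theta$, but you also need $x\cap N\in V$, which your first-paragraph distributivity facts do supply -- say so. Your restriction to regular $\Theta$ is harmless and in fact matches a hypothesis the paper uses tacitly (its step ``$m\cap H^V(\Theta)\in H^V(\Theta)$ by its size'' likewise needs $\cf(\Theta)\geq\mu$), so this is a shared convention rather than a gap on your side.
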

	
	\begin{proof}
		In $V$, let $\Theta$ be arbitrary and let $F\colon[H^V(\Theta)]^{<\omega}\to[H^V(\Theta)]^{\mu}$ be any function. Let $\Theta'$ be so large that $F,\dQ\in H(\Theta')$. The following is clear:
		\setcounter{myclaim}{0}
		\begin{myclaim}
			In $V[G]$, the set
			$$D:=\{M\in[H^V(\Theta')]^{\mu}\;|\;M[G]\cap V=M\}$$
			is club in $[H^V(\Theta')]^{\mu}$.
		\end{myclaim}
		
		Since $H^V(\Theta')[G]=H^{V[G]}(\Theta')$ by the assumption $\dQ\in H(\Theta')$, we know that $\{M[G]\;|\;M\in D\}$ is club in $[H^{V[G]}(\Theta')]^{\mu}$. So by $\ISNIC^+(\mu^+)$ holding in $V[G]$, we can find $M\in D$ such that $F\in M$, $H^V(\Theta)\in M$, $M[G]\prec(H^{V[G]}(\Theta'),\in)$ and $M[G]$ is internally stationary but does not contain a club in $[M[G]\cap\mu^+]^{<\mu}$. Clearly $N:=M[G]\cap H^V(\Theta)=M\cap H^V(\Theta)$ is closed under $F$, since $F\in V$. Also $N\in V$ by the distributivity of $\dQ$.
		
		\begin{myclaim}
			$N$ is internally stationary in $V$.
		\end{myclaim}
		
		\begin{proof}
			In $V$, let $c\subseteq[N]^{<\mu}$ be club. Then $\{m\in[M]^{<\mu}\;|\;m\cap H^V(\Theta)\in c\}$ is club in $[M]^{<\mu}$ and so $\{m[G]\;|\;m\in[M]^{<\mu}\wedge m\cap H^V(\Theta)\in c\}$ is club in $[M[G]]^{<\mu}$. Ergo there is $m\in[M]^{<\mu}$ with $m\cap H^V(\Theta)\in c$ such that $m[G]\in M[G]$ since $M[G]$ is internally stationary. As in the first claim, we may assume $m[G]\cap V=m$ from which it follows that
			$$m\cap H^V(\Theta)=m[G]\cap H^V(\Theta)\in c\cap M[G]\cap H^V(\Theta)=c\cap N$$
			since $m\cap H^V(\Theta)\in H^V(\Theta)$ by its size and the distributivity of $\dQ$.
		\end{proof}
		
		We are finished after showing:
		
		\begin{myclaim}
			$N$ does not contain a club in $[N\cap\mu^+]^{<\mu}$ in $V$.
		\end{myclaim}
		
		\begin{proof}
			Let $c\subseteq[N\cap\mu^+]^{<\mu}$ be club. Since $N\cap\mu^+=M[G]\cap\mu^+$, $c\subseteq N$ would imply that $M[G]\supseteq N$ contains a club in $[M[G]\cap\mu^+]^{<\mu}$, a clear contradiction.
		\end{proof}
		
		This finishes the proof of Lemma \ref{PreserveDown}.
	\end{proof}
	
	Finally, we can show Theorem \ref{Thm2}:
	
	\begin{mysen}
		After forcing with $\dP((\kappa_n)_{n\in\omega})$, for any $n\in\omega$ and $\Theta\geq\aleph_{n+2}$, there are stationarily many $N\in[H(\Theta)]^{\aleph_{n+1}}$ which are internally stationary but not internally club.
	\end{mysen}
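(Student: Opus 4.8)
The plan is to show that $\dP:=\dP((\kappa_n)_{n\in\omega})$ forces $\ISNIC^+(\kappa_{n-1}^+)$ for every $n\in\omega$. Writing $\mu:=\kappa_{n-1}=\aleph_{n+1}$ (so that $\mu^+=\kappa_n=\aleph_{n+2}$), this yields the theorem: as noted in the remark following Theorem \ref{BetterDist}, any witness $N$ to $\ISNIC^+(\mu^+)$ which we take to be elementary in $H(\Theta)$ with $\mu^+\in N$ automatically fails to be internally club. Just as in the proof of the $\DSS$ theorem, I would fix a $\dP$-generic $G$ and use the factorisation $\dP\cong\dP_n\times\dM^+(\omega,\kappa_{n-1},\kappa_n)\times\dP^{n+1}$, writing $V[G]=V[G(n)][G_n][G^{n+1}]$ accordingly.

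The first two factors are handled by the preservation machinery directly. By Theorem \ref{BetterDist}, since $\kappa_n$ is Mahlo, $\dM^+(\omega,\kappa_{n-1},\kappa_n)$ forces $\ISNIC^+(\mu^+)$, so $V[G(n)]\models\ISNIC^+(\mu^+)$. Next, $\dP_n$ is $\kappa_{n-1}=\mu$-Knaster by Lemma \ref{PDecomp}(1), and (by the same argument used for $\DSS$) remains so over $V[G(n)]$; being $\mu$-cc, Lemma \ref{PreserveUp} gives $W:=V[G(n)][G_n]\models\ISNIC^+(\mu^+)$.

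The real work is the tail $\dP^{n+1}$. By Lemma \ref{PDecomp}(2) it is the projection of $\dA:=\prod_{m\geq n+1}\Add^*(\omega,\kappa_m)$ (finite support, hence ccc) times $\dT^{n+1}((\kappa_n)_{n\in\omega})$ (which is ${<}\,\kappa_n={<}\,\mu^+$-closed), with quotient forced to be ${<}\,\kappa_n={<}\,\mu^+$-distributive. I would therefore pass to $W[H]$ for $H$ being $\dA\times\dT^{n+1}((\kappa_n)_{n\in\omega})$-generic projecting to $G^{n+1}$; then $W[H]=V[G][K]$ with $K$ generic for the ${<}\,\mu^+$-distributive quotient, so by Lemma \ref{PreserveDown} it suffices to establish $\ISNIC^+(\mu^+)$ in $W[H]$. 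Since $\dA$ is ccc (hence $\mu$-cc), Lemma \ref{PreserveUp} propagates $\ISNIC^+(\mu^+)$ from $W$ to $W[H_{\dA}]$, so the crux is to push it upward through the ${<}\,\mu^+$-closed factor $\dT^{n+1}((\kappa_n)_{n\in\omega})$.

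This closed-upward step is where I expect the main difficulty, precisely because Theorem \ref{BetterDist} \emph{created} the failure of internal clubness using new reals (via Fact \ref{KruegerFact}), whereas $\dT^{n+1}((\kappa_n)_{n\in\omega})$ adds no new $\leq\mu$-sequences at all; so here one must argue by preservation rather than creation. Starting from a witness $M\prec H^{W[H_{\dA}]}(\Theta')$ of $\ISNIC^+(\mu^+)$ containing $\dT^{n+1}((\kappa_n)_{n\in\omega})$ and a name $\dot F$ for the given function, I would set $N:=M[g]\cap H(\Theta)$ (with $g$ the $\dT^{n+1}((\kappa_n)_{n\in\omega})$-generic); closure of $N$ under $F$ is automatic from $\dot F\in M$. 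For internal stationarity I would follow the bijection technique of Theorem \ref{BetterDist}, reducing it to the preservation of a suitable \emph{ground-model} stationary subset of $[M\cap\Delta]^{<\mu}$, which survives because $\dT^{n+1}((\kappa_n)_{n\in\omega})$, being ${<}\,\mu^+$-closed, is strongly ${<}\,\mu$-distributive and hence preserves such stationary sets by Corollary \ref{PresPropGen}. For the failure of internal clubness I would use that any club contained in $N$ and lying in $[N\cap\mu^+]^{<\mu}$ would already be a ground-model club contained in $M$, since $\dT^{n+1}((\kappa_n)_{n\in\omega})$ adds no subsets of $N\cap\mu^+$ of size ${<}\,\mu$ and clubhood of such a set is absolute. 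Having $\ISNIC^+(\mu^+)$ in $W[H]$, Lemma \ref{PreserveDown} descends it to $V[G]$, completing the argument. The delicate point throughout is the control of $M[g]\cap V[G]$ and of $N\cap\mu^+$ under the closed forcing, which is exactly the interplay that strong distributivity and the absence of new small sets are designed to govern.
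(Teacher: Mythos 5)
Your global strategy---pass to the full product extension $W[H]$ (which is the same model as the paper's $\dP^{\text{proj}}$-extension, since all factors of the product commute), establish $\ISNIC^+(\mu^+)$ there, and descend to $V[G]$ through the ${<}\,\mu^+$-distributive quotient via Lemma \ref{PreserveDown}---is exactly right, as are the steps handling $\dM^+(\omega,\kappa_{n-1},\kappa_n)$ by Theorem \ref{BetterDist} and the $\mu$-cc factors $\dP_n$ and $\prod_{m\geq n+1}\Add^*(\omega,\kappa_m)$ by Lemma \ref{PreserveUp}. The gap is the step you yourself single out as the crux: pushing $\ISNIC^+(\mu^+)$ \emph{upward} through $\dT^{n+1}$. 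The mechanism behind Lemma \ref{PreserveUp} is the equality $M[g]\cap V=M$, which the paper proves (Section 2) only for $\mu$-cc forcings: there, every name in $M$ for a ground-model object is decided by a maximal antichain of size ${<}\,\mu$ contained in $M$. For $\dT^{n+1}$---which over $W[H_{\dA}]$ is no longer ${<}\,\mu^+$-closed at all, at best strongly ${<}\,\mu^+$-distributive via Lemma \ref{StrongEaston}---antichains deciding names for ordinals below $\mu^+$ have size $\mu^+$ and cannot be captured inside $M$, so generically $M[g]$ acquires ground-model ordinals outside $M$; in particular $N\cap\mu^+=M[g]\cap\mu^+$ can properly contain $M\cap\mu^+$, and $M[g]\cap V\supsetneq M$. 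This breaks both halves of your sketch: the bijection technique for internal stationarity needs $\iota$ to restrict to a bijection of $M\cap\Delta$ (not of the possibly larger $M[g]\cap\Delta$) onto $N$; and, more seriously, a club $c\subseteq N$ in $[N\cap\mu^+]^{<\mu}$, while indeed a ground-model set of ground-model elements (that part of your absoluteness observation is fine), lives on the index set $[M[g]\cap\mu^+]^{<\mu}$ rather than $[M\cap\mu^+]^{<\mu}$ and need not be contained in $M$, so no contradiction with the choice of $M$ follows. Neither the paper nor any routine argument supplies this upward preservation through a closed forcing, and it is doubtful as stated.

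The paper's proof is engineered precisely to avoid this step: since $\dP^{\text{proj}}$ is a product, one may \emph{commute the closed factor to the front}. Over $V$, $\dT^{n+1}$ is genuinely ${<}\,\kappa_n$-closed, so it preserves the Mahloness of $\kappa_n$ and does not change the definition of $\dM^+(\omega,\kappa_{n-1},\kappa_n)$; hence Theorem \ref{BetterDist} is applied \emph{after} forcing with $\dT^{n+1}$, so that the $\ISNIC^+(\kappa_{n-1}^+)$ witnesses are created only once the closed forcing has already acted. After that, only the $\mu$-cc factors $\prod_{k\geq n+1}\Add^*(\omega,\kappa_k)\times\dP_n$ remain, handled by Lemma \ref{PreserveUp}, and Lemma \ref{PreserveDown} descends to $V[G]$ exactly as in your plan. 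So the missing idea is not a new preservation lemma but a reordering: never attempt to push the witnesses up through the closed forcing, but create them afterwards.
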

	
	\begin{proof}
		Write $\dP:=\dP((\kappa_n)_{n\in\omega})$ and let $n\in\omega$ be arbitrary. We can regard forcing with $\dP$ as forcing first with $\dP^{n+1}$, followed by $\dM^+(\omega,\kappa_{n-1},\kappa_n)$ and lastly with $\dP_n$. Moreover, any extension by $\dP^{n+1}$ can be extended to an extension by $\prod_{k\geq n+1}\Add^*(\omega,\kappa_k)\times\dT^{n+1}$ using a ${<}\,\kappa_n$-distributive forcing (by Lemma \ref{PDecomp}). So in summary, any extension by $\dP$ can be extended to an extension by
		$$\dP^{\text{proj}}:=\dT^{n+1}\times\dM^+(\omega,\kappa_{n-1},\kappa_n)\times\prod_{k\geq n+1}\Add^*(\omega,\kappa_k)\times\dP_n$$
		using a ${<}\,\kappa_n=\kappa_{n-1}^+$-distributive forcing. In light of Lemma \ref{PreserveDown} it suffices to show that $\dP^{\text{proj}}$ forces $\ISNIC^+(\kappa_{n-1}^+)$ (since $\kappa_{n-1}$ becomes $\aleph_{n+1}$).
		
		Since $\dT^{n+1}$ is ${<}\,\kappa_n$-closed, it preserves the Mahloness of $\kappa_n$ and does not change the definition of $\dM^+(\omega,\kappa_{n-1},\kappa_n)$. By Lemma \ref{BetterDist}, $\dT^{n+1}\times\dM^+(\omega,\kappa_{n-1},\kappa_n)$ forces $\ISNIC^+(\kappa_{n-1}^+)$. In any extension by $\dT^{n+1}\times\dM^+(\omega,\kappa_{n-1},\kappa_n)$, the tail of the product $\dP^{\text{proj}}$ remains $\kappa_{n-1}$-cc and thus preserves $\ISNIC^+(\kappa_{n-1}^+)$ by Lemma \ref{PreserveUp}. Ergo $\dP^{\text{proj}}$ forces $\ISNIC^+(\kappa_{n-1}^+)$, so $\dP$ forces $\ISNIC^+(\kappa_{n-1}^+)$.
	\end{proof}
	
	\section{Open Questions}
	
	We close with a few open questions. As for $\DSS$, one technical aspect of this paper is that, in order to obtain $\DSS$, we had to add many Cohen reals. This is problematic because it means that there is no hope of obtaining a model where $\DSS$ holds at all successors of regular cardinals. Thus we ask (see also \cite[Question 12.3]{KruegerApplicMSI}):
	
	\begin{myque}
		Is it consistent there is a disjoint stationary sequence on $\aleph_3$ and $2^{\aleph_0}=\aleph_1$?
	\end{myque}
	
	This is related to obtaining a higher analogue of Fact \ref{KruegerFact} which works without adding reals. There is a motivating result by Dobrinen and Friedman (see \cite{DobrinenFriedmanCoStat}) who showed that it is consistent that the ground model is costationary after forcing with $\Add(\mu)$ for any regular $\mu$. However, they do not obtain stationarily many new structures \emph{which are weakly internally approachable of length $\mu$}, so it is unclear if their methods can be adapted to obtain a disjoint stationary sequence without adding reals.
	
	We are also interested in technical questions regarding strong distributivity. For one, we do not even know if there must always be a poset which is strongly distributive but does not have any kind of closure (all of our examples come from viewing old posets in generic extensions). Thus we ask:
	
	\begin{myque}
		Let $\kappa$ be a cardinal. Is it provable in $\ZFC$ that there is a poset $\dP$ which is strongly ${<}\,\kappa$-distributive but not $\kappa$-strategically closed?
	\end{myque}
	
	In other words, we are asking if it is consistent that the completeness game $G(\dP,\kappa)$ is determined for any partial order $\dP$.
	
	\subsection*{Acknowledgments}
	
	The author wants to thank Heike Mildenberger and Maxwell Levine for many helpful conversations regarding this work. He also wants to thank the anonymous referee for their helpful comments leading to improvement of the manuscript.
	
	\bibliographystyle{plain}

\end{document}